\newtheorem{thm}{Theorem}[section]
\newtheorem{lem}[thm]{Lemma}
\newtheorem{proposition}{Proposition}[section]
\theoremstyle{definition}
\newtheorem{dfn}{Definition}[section]
\newdefinition{con}{Construction}[section]
\newdefinition{rmk}{Remark}[section]
\theoremstyle{remark}
\numberwithin{equation}{section}
\journal{Journal of \LaTeX\ Templates}
\def\ps@pprintTitle{%
   \let\@oddhead\@empty
   \let\@evenhead\@empty
   \def\@oddfoot{\reset@font\hfil\thepage\hfil}
   \let\@evenfoot\@oddfoot
}
\begin{document}

\begin{frontmatter}

\title{Directed Strongly Regular Cayley Graphs on Dihedral groups}
%\tnotetext[mytitlenote]{Fully documented templates are available in the elsarticle package on \href{http://www.ctan.org/tex-archive/macros/latex/contrib/elsarticle}{CTAN}.}
\author[abc,rvt]{Yiqin He\corref{cor1}}
\ead{2014750113@smail.xtu.edu.cn}
\author[rvt]{Bicheng Zhang\corref{cor1}}
\ead{zhangbicheng@xtu.edu.cn}
\author[abc]{Rongquan Feng}
\ead{fengrongquan@pku.edu.cn}
\address[abc]{School of Mathematics Science, Peking University, Beijing, 100877, PR China}
\address[rvt]{School of Mathematics and Computational Science, Xiangtan Univerisity, Xiangtan, Hunan, 411105, PR China}
\cortext[cor1]{Corresponding author}

%\fntext[fn1]{Supported by Hunan Provincial Natural Science Foundation of China (09JJ4002).}
%\fntext[fn2]{Supported by Natural Science Foundation of China (11471108).}
%\fntext[fn2]{Another author footnote, this is a very long
%footnote and it should be a really long footnote. But this
%footnote is not yet sufficiently long enough to make two lines
%of footnote text.}
%\fntext[fn3]{Yet another author footnote.}
%% Group authors per affiliation:
%\author{Elsevier\fnref{myfootnote}}
%\address{Radarweg 29, Amsterdam}
%\fntext[myfootnote]{Since 1880.}

%% or include affiliations in footnotes:
%\author[mymainaddress,mysecondaryaddress]{Elsevier Inc}
%\ead[url]{www.elsevier.com}

%\author[mysecondaryaddress]{Global Customer Service\corref{mycorrespondingauthor}}
%\cortext[mycorrespondingauthor]{Corresponding author}
%\ead{support@elsevier.com}
%
%\address[mymainaddress]{1600 John F Kennedy Boulevard, Philadelphia}
%\address[mysecondaryaddress]{360 Park Avenue South, New York}

\begin{abstract}
In this paper,\;we construct some  directed strongly regular Cayley graphs on Dihedral groups,\;which generalizes some earlier constructions.\;We also characterize certain directed strongly regular Cayley graphs on Dihedral groups $D_{p^\alpha}$,\;where $p$ is a prime and $\alpha\geqslant 1$ is a positive integer.\;
\end{abstract}
\begin{keyword}Directed strongly regular graph,\;Cayley graph,\;Dihedral group,\;Representation Theory,\;Fourier Transformation,\;Algebraic Number Theory

\end{keyword}
\end{frontmatter}
%\linenumbers
\section{Introduction}
A \emph{directed strongly regular graph} (DSRG) with parameters $( n, k, \mu ,\lambda , t)$ is a $k$-regular directed graph on $n$ vertices such that every vertex is on $t$ 2-cycles,\;and the number of paths of length two from a vertex $x$ to a vertex $y$ is $\lambda$ if there is an edge directed from $x$ to $y$ and  is $\mu$ otherwise.\;A DSRG with $t=k$ is an \emph{(undirected) strongly regular graph} (SRG).\;Duval showed that DSRGs with $t=0$ are the \emph{doubly regular tournaments}.\;It is therefore usually assumed that $0<t<k$.\;The DSRGs which satisfy the condition $0<t<k$ are called genuine DSRGs.\;The DSRGs appeared on this paper are all genuine.

%Another definition of a directed strongly regular graph,\;in terms of its adjacency matrix.\;
Let $D$ be a directed graph with $n$ vertices.\;Let $A=\mathbf{A}(D)$ denote the adjacency matrix of $D$,\;and let $I = I_n$ and
$J = J_n$ denote the $n\times n$ identity matrix and all-ones matrix,\;respectively.\;Then $D$ is a directed strongly regular graph with parameters $(n,k,\mu,\lambda,t)$ if
and only if (i) $JA = AJ = kJ$ and (ii) $A^2=tI+\lambda A+\mu(J-I-A)$.\;

The constructions of DSRGs is a significant problem and has long been concerned.\;There are many constructions of DSRGs.\;Some of the known constructions use quadratic residue \cite{A},\;Kronecker product \cite{A},\;block matrices \cite{AD,A},\;combinatorial block designs \cite{FI},\;coherent algebras\cite{FI,K1,K2},\;Cayley digraph \cite{FI,HO,K2},\;generalized Cayley digraph \cite{FE},\;semidirect product \cite{D},\;finite incidence structures \cite{OL,olmez2014some},\;finite geometries \cite{FI},\;$1\frac{1}{2}$-designs \cite{BR},\;generalized quadrangle \cite{GE},\;BIBD \cite{GE},\;partial geometry \cite{GE,OL},\;double Paley designs \cite{FI},\;group divisible \cite{OL},\;difference digraph,\;partial sum families \cite{NE} and equitable partition \cite{EQ}.\;

Let $G$ be a finite (multiplicative) group and $S$ be a subset of $G\setminus\{e\}$.\;The \emph{Cayley graph} of $G$ generated by $S$,\;denoted by $\mathbf{Cay}(G,S)$,\;is the digraph $\Gamma$ such that $V(\Gamma)=G$ and $x\rightarrow y$ if and only if
$x^{-1}y\in S$,\;for $x,\;y\in G$.

Let $C_n=\langle x\rangle$ be a cyclic multiplicative group of order $n$.\;The \emph{dihedral group} $D_n$ is the group of symmetries of a regular $n$-polygon,\;and it can be viewed as a semidirect product of two cyclic groups $C_n=\langle x\rangle$ of order $n$ and $C_2=\langle a\rangle$ of order $2$.\;The presentation of $D_n$ is $D_n=C_n\rtimes C_2=\langle x,a|x^n=1,a^2=1,ax=x^{-1}a\rangle$.\;The cyclic group $C_n$ is a normal subgroup of $D_n$ of index $2$.\;

In this paper,\;we focus on the directed strongly regular Cayley graphs on dihedral groups.\;The \emph{Cayley graphs} on dihedral groups are called \emph{dihedrants}.\;A dihedrant which is a DSRG is called \emph{directed strongly regular dihedrant}.\;We now give some known directed strongly regular dihedrants.
\begin{thm}(\cite{K2})\label{t-1.1}Let $n$ be odd and let $X,Y\subset C_n$ satisfy the following conditions:\\
(i)\;$\overline{X}+\overline{X^{(-1)}}=\overline{C_n}-e$,\\
(ii)\;$\overline{Y}\;\overline{Y^{(-1)}}-\overline{X}\;\overline{X^{(-1)}}=\varepsilon\overline{C_n}$,$\varepsilon\in\{0,1\}$.\\
Then $\mathbf{Cay}(D_n,X\cup aY)$ is a DSRG with parameters $(2n,n-1+\varepsilon,\frac{n-1}{2}+\varepsilon,\frac{n-3}{2}+\varepsilon,\frac{n-1}{2}+\varepsilon)$.\;In particular,\;if $X$ satisfies $(i)$ and $Y=Xg$ or $X^{(-1)}g$ for some $g\in C_n$,\;then $\mathbf{Cay}(D_n,X\cup aY)$ is a DSRG with parameters $(2n,n-1,\frac{n-1}{2},\frac{n-3}{2},\frac{n-1}{2})$.\;
\end{thm}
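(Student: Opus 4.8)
The plan is to reformulate the DSRG property as an identity in the integral group ring $\mathbb{Z}[D_n]$ and then use the decomposition $\mathbb{Z}[D_n]=\mathbb{Z}[C_n]\oplus\mathbb{Z}[C_n]\,a$ coming from the semidirect product $D_n=C_n\rtimes C_2$ to split that single identity into two identities living in $\mathbb{Z}[C_n]$, each of which follows by a short computation from hypotheses (i) and (ii). Throughout I identify a subset of a group with the corresponding element of its integral group ring, so that the connection set $S=X\cup aY$ yields $\overline{S}=\overline{X}+a\overline{Y}$. By the matrix criterion recalled in the introduction, $\mathbf{Cay}(D_n,S)$ is a DSRG with parameters $(2n,k,\mu,\lambda,t)$ exactly when $e\notin S$, $\overline{S}\,\overline{D_n}=k\,\overline{D_n}$, and
\[
\overline{S}^{\,2}=t\,e+\lambda\,\overline{S}+\mu\bigl(\overline{D_n}-e-\overline{S}\bigr).
\]
Here $e\notin S$ is immediate, since (i) forces the coefficient of $e$ in $\overline{X}$ to be $0$ while $aY$ lies in the nontrivial coset $C_n a$; and the regularity identity reduces to $|S|=|X|+|Y|=k$.

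For the quadratic identity I would expand $\overline{S}^{\,2}=(\overline{X}+a\overline{Y})^2$ using the relation $xa=ax^{-1}$ for $x\in C_n$, equivalently $\overline{X}\,a=a\,\overline{X^{(-1)}}$ and $a\,\overline{Y}\,a=\overline{Y^{(-1)}}$. Since $C_n$ is abelian this gives
\[
\overline{S}^{\,2}=\bigl(\overline{X}^{\,2}+\overline{Y}\,\overline{Y^{(-1)}}\bigr)+a\,\overline{Y}\bigl(\overline{X}+\overline{X^{(-1)}}\bigr).
\]
Writing also $\overline{D_n}-e-\overline{S}=\bigl(\overline{C_n}-e-\overline{X}\bigr)+a\bigl(\overline{C_n}-\overline{Y}\bigr)$ and comparing the $\mathbb{Z}[C_n]$-part and the $\mathbb{Z}[C_n]\,a$-part on both sides, then inserting the asserted parameters (so that $t=\mu=\tfrac{n-1}{2}+\varepsilon$ and $\lambda-\mu=-1$), the theorem reduces to the two identities in $\mathbb{Z}[C_n]$:
\[
\overline{X}^{\,2}+\overline{X}+\overline{Y}\,\overline{Y^{(-1)}}=\Bigl(\tfrac{n-1}{2}+\varepsilon\Bigr)\overline{C_n},\qquad \overline{Y}\bigl(\overline{X}+\overline{X^{(-1)}}+e\bigr)=\Bigl(\tfrac{n-1}{2}+\varepsilon\Bigr)\overline{C_n}.
\]

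Both identities then fall out of the hypotheses. Hypothesis (i) reads $\overline{X}+\overline{X^{(-1)}}+e=\overline{C_n}$, so the second identity becomes $|Y|\,\overline{C_n}=(\tfrac{n-1}{2}+\varepsilon)\overline{C_n}$; applying the augmentation map to (i) gives $2|X|=n-1$ and to (ii) gives $|Y|^{2}=|X|^{2}+\varepsilon n=(\tfrac{n-1}{2}+\varepsilon)^{2}$, whence $|Y|=\tfrac{n-1}{2}+\varepsilon$ and the second identity holds. For the first identity, use (ii) to replace $\overline{Y}\,\overline{Y^{(-1)}}$ by $\overline{X}\,\overline{X^{(-1)}}+\varepsilon\overline{C_n}$, then (i) to rewrite $\overline{X}\bigl(\overline{X}+\overline{X^{(-1)}}\bigr)=\overline{X}\bigl(\overline{C_n}-e\bigr)=|X|\,\overline{C_n}-\overline{X}$; the left-hand side then collapses to $(|X|+\varepsilon)\overline{C_n}=(\tfrac{n-1}{2}+\varepsilon)\overline{C_n}$, as needed. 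I expect the only delicate point to be bookkeeping the involution $Z\mapsto Z^{(-1)}$ correctly when pushing the reflection $a$ past elements of $C_n$, together with the observation that (ii) and (i) pin $|Y|$ down exactly; there is no deeper obstruction.

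Finally, for the ``in particular'' clause: if $Y=Xg$ with $g\in C_n$ then $\overline{Y}=\overline{X}\,g$ and $\overline{Y^{(-1)}}=\overline{X^{(-1)}}\,g^{-1}$, so $\overline{Y}\,\overline{Y^{(-1)}}=\overline{X}\,\overline{X^{(-1)}}$ because $C_n$ is abelian; likewise $Y=X^{(-1)}g$ gives $\overline{Y}\,\overline{Y^{(-1)}}=\overline{X^{(-1)}}\,\overline{X}=\overline{X}\,\overline{X^{(-1)}}$. In either case hypothesis (ii) holds with $\varepsilon=0$, so the first part of the theorem applies and produces a DSRG with parameters $(2n,\,n-1,\,\tfrac{n-1}{2},\,\tfrac{n-3}{2},\,\tfrac{n-1}{2})$.
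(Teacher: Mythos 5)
Your argument is correct, and it is essentially the route the paper itself takes: Theorem \ref{t-1.1} is quoted from \cite{K2} without proof, but your splitting of the group-ring identity $\overline{S}^{\,2}=te+\lambda\overline{S}+\mu(\overline{D_n}-e-\overline{S})$ into its $\mathbb{Z}[C_n]$ and $\mathbb{Z}[C_n]a$ components is exactly the mechanism of Lemma \ref{l-2.1} and Lemma \ref{l-4.1}, and your two reduced identities match equations (\ref{4.1})--(\ref{4.2}) specialized to $t=\mu$, $\lambda-\mu=-1$. The augmentation computation pinning down $|X|=\tfrac{n-1}{2}$ and $|Y|=\tfrac{n-1}{2}+\varepsilon$, and the verification of the ``in particular'' clause, are both sound.
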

We can say more when $n$ is an odd prime.
\begin{thm}(\cite{K2})\label{t-1.2}Let $n$ be an odd prime and let $X,Y\subset C_n$ and  $b\in D_n\setminus C_n$,\;Then the Cayley graph $\mathbf{Cay}(D_n,X\cup bY)$
 is a DSRG if and only if $X,Y$ satisfy the conditions of Theorem \ref{t-1.1}.\;
\end{thm}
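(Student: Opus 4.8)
The plan is to translate the DSRG condition into an identity in the integral group ring $\mathbb{Z}[D_n]$, and then resolve that identity using characters of $C_n$ together with Galois action; the hypothesis that $n$ is prime enters precisely at the Galois step.

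First I would reduce to the case $b=a$. Every element of $D_n\setminus C_n$ has the form $ax^i$, so $bY=a(x^iY)$ with $x^iY\subseteq C_n$; moreover $\overline{x^iY}\,\overline{(x^iY)^{(-1)}}=x^i\overline{Y}\,x^{-i}\overline{Y^{(-1)}}=\overline{Y}\,\overline{Y^{(-1)}}$ while $\overline{X}\,\overline{X^{(-1)}}$ is unaffected, so the pair $(X,Y)$ satisfies conditions $(i)$, $(ii)$ if and only if $(X,x^iY)$ does, and $\mathbf{Cay}(D_n,X\cup bY)=\mathbf{Cay}(D_n,X\cup a(x^iY))$. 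Hence it suffices to treat $b=a$; the ``if'' direction is then exactly Theorem~\ref{t-1.1}, so only ``only if'' remains. Assuming $\mathbf{Cay}(D_n,X\cup aY)$ is a genuine DSRG with parameters $(2n,k,\mu,\lambda,t)$, I would put $\overline{S}=\overline{X}+a\overline{Y}$ and use $x^ia=ax^{-i}$ to compute $\overline{S}^{\,2}=\bigl(\overline{X}^{\,2}+\overline{Y}\,\overline{Y^{(-1)}}\bigr)+a\bigl((\overline{X}+\overline{X^{(-1)}})\overline{Y}\bigr)$; comparing this with the defining identity $\overline{S}^{\,2}=t\,e+\lambda\overline{S}+\mu(\overline{D_n}-e-\overline{S})$ and using $\overline{D_n}=\overline{C_n}+a\overline{C_n}$ yields, componentwise,
\begin{align}
\overline{X}^{\,2}+\overline{Y}\,\overline{Y^{(-1)}}&=(t-\mu)e+(\lambda-\mu)\overline{X}+\mu\overline{C_n},\tag{A}\\
(\overline{X}+\overline{X^{(-1)}})\,\overline{Y}&=(\lambda-\mu)\overline{Y}+\mu\overline{C_n}.\tag{B}
\end{align}
I would also record $k=|X|+|Y|$, $t=|X\cap X^{(-1)}|+|Y|$ (since $aY$ consists of involutions), and the scalar relation $k^2=t-\mu+(\lambda-\mu)k+2n\mu$ obtained by applying the trivial character of $D_n$.

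The crux is the analysis of (B). Applying a nontrivial character $\chi$ of $C_n$ and using $\chi(\overline{C_n})=0$ and $\chi(\overline{X^{(-1)}})=\overline{\chi(\overline{X})}$ gives $\bigl(\chi(\overline{X}+\overline{X^{(-1)}})-(\lambda-\mu)\bigr)\chi(\overline{Y})=0$. If $\chi(\overline{Y})\neq 0$ for some nontrivial $\chi$, then $\chi(\overline{X}+\overline{X^{(-1)}})=\lambda-\mu\in\mathbb{Z}$; since $n$ is prime the nontrivial characters of $C_n$ form a single Galois orbit, so the rational value $\lambda-\mu$ is attained by $\chi'(\overline{X}+\overline{X^{(-1)}})$ for every nontrivial $\chi'$, and Fourier inversion gives $\overline{X}+\overline{X^{(-1)}}=\tfrac{2|X|-(\lambda-\mu)}{n}\overline{C_n}+(\lambda-\mu)e$. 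Since the coefficient at $e$ vanishes ($e\notin X$), the coefficient $\tfrac{2|X|-(\lambda-\mu)}{n}$ must be $0$, $1$, or $2$; the values $0$ and $2$ force $X=\varnothing$ or $X=C_n\setminus\{e\}$, whereas the value $1$ forces $\lambda-\mu=-1$, $|X|=\tfrac{n-1}{2}$, and $\overline{X}+\overline{X^{(-1)}}=\overline{C_n}-e$, i.e. condition $(i)$. (The alternative, $\chi(\overline{Y})=0$ for all nontrivial $\chi$, means $\overline{Y}=\tfrac{|Y|}{n}\overline{C_n}$, so $Y\in\{\varnothing,C_n\}$.) I expect the disposal of the four degenerate connection sets $X\in\{\varnothing,C_n\setminus\{e\}\}$ and $Y\in\{\varnothing,C_n\}$ to be the fussiest part: $X=\varnothing$ makes the graph bipartite, forcing $\mu=\lambda=0$ and then $A^2=tI$, so $t=k^2>k$ contradicts $0<t<k$; $X=C_n\setminus\{e\}$ forces $t=k$ via (A); $Y=\varnothing$ makes the graph two disjoint copies of $\mathbf{Cay}(C_n,X)$, forcing $\mu=0$ and $\mathbf{Cay}(C_n,X)$ a genuine DSRG, impossible since a circulant over $C_n$ with integer eigenvalues (forced by genuineness) has $X\in\{\varnothing,C_n\setminus\{e\}\}$; and $Y=C_n$ is ruled out by a parallel Galois/Gauss-sum argument resting on the fact that $(\lambda-\mu)^2+4(t-\mu)$ must be a perfect square.

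In the surviving case we now have $(i)$: $\overline{X^{(-1)}}=\overline{C_n}-e-\overline{X}$, $|X|=\tfrac{n-1}{2}$, $\lambda-\mu=-1$. Substituting into (A) and using $\overline{X}^{\,2}=|X|\overline{C_n}-\overline{X}-\overline{X}\,\overline{X^{(-1)}}$, the $\overline{X}$-terms cancel and one obtains
\[
\overline{Y}\,\overline{Y^{(-1)}}-\overline{X}\,\overline{X^{(-1)}}=(t-\mu)e+\bigl(\mu-\tfrac{n-1}{2}\bigr)\overline{C_n}.
\]
The coefficient at $e$ gives $|Y|=t$, hence $k=\tfrac{n-1}{2}+t$; feeding this and $\lambda-\mu=-1$ into the scalar relation $k^2=t-\mu+(\lambda-\mu)k+2n\mu$, and separately applying the trivial character of $C_n$ to the displayed identity, produces two quadratic relations in $t$ and $\mu$ whose difference simplifies to $t=\mu$. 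Then the displayed identity reads $\overline{Y}\,\overline{Y^{(-1)}}-\overline{X}\,\overline{X^{(-1)}}=\varepsilon\overline{C_n}$ with $\varepsilon:=\mu-\tfrac{n-1}{2}$, and substituting $t=\mu$ back into one of the quadratic relations gives $(2\mu-n)^2=1$, so $\mu\in\{\tfrac{n-1}{2},\tfrac{n+1}{2}\}$ and $\varepsilon\in\{0,1\}$. This is condition $(ii)$, and the ``only if'' direction follows.
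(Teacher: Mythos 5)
The paper does not actually prove Theorem~\ref{t-1.2}; it is quoted from \cite{K2}, so there is no in-text proof to compare against. Judged on its own, your argument is correct and is the natural one given the machinery the paper itself later develops: your identities (A) and (B) are exactly the decomposition of $\overline{S}^2$ into its $C_n$- and $aC_n$-components that reappears as Lemma~\ref{l-4.1} (equations (\ref{4.1})--(\ref{4.2})), and your key step --- that for prime $n$ the nontrivial characters form a single Galois orbit, so a single rational character value $\lambda-\mu$ propagates to all of them and forces $\overline{X}+\overline{X^{(-1)}}=c(\overline{C_n}-e)$ with $c\in\{0,1,2\}$ --- is the standard specialization of the paper's rationality lemma (Lemma~\ref{l-2.2}) to $\mathbb{Z}_p$. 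The quantitative finish (coefficient of $e$ giving $|Y|=t$, the two augmentation relations giving $t=\mu$ and $(2\mu-n)^2=1$, hence $\varepsilon\in\{0,1\}$) checks out.

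Two small points in the degenerate-case disposal deserve tightening. First, for $X=\varnothing$ the claim that bipartiteness forces $\lambda=\mu=0$ needs both an adjacent and a non-adjacent cross pair, i.e.\ $\varnothing\neq Y\neq C_n$; the excluded subcases ($Y=\varnothing$ empty graph, $Y=C_n$ giving $K_{n,n}$ with $t=k$) are trivial but should be said. (Alternatively, $\mu=0$ alone already contradicts Proposition~\ref{p-2.1}, which gives $\mu>0$ for a genuine DSRG, so you can shortcut both the $X=\varnothing$ and $Y=\varnothing$ cases.) Second, the $Y=C_n$ case is only gestured at; the clean version is to apply a nontrivial $\chi$ to (A), get $\chi(\overline{X})^2=(t-\mu)+(\lambda-\mu)\chi(\overline{X})$ so that $\chi(\overline{X})\in\{\rho,\sigma\}\subseteq\mathbb{Z}$, and then run the same Galois-orbit argument on $\overline{X}$ to force $X\in\{\varnothing,\,C_n\setminus\{e\}\}$, which are already excluded. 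With those two patches the proof is complete.
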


\begin{thm}(\cite{FI})\label{t-1.3}Let $n$ be even,\;$c\in C_n$ be an involution and let $X,Y\subset C_n$ such that:\\
(i)\;$\overline{X}+\overline{X^{(-1)}}=\overline{C_n}-e-c$,\\
(ii)\;$\overline{Y}=\overline{X}$ or $\overline{Y}=\overline{X^{(-1)}}$,\\
(ii)\;$\overline{Xc}=\overline{X^{(-1)}}$.\;\\
Let $b\in D_n\setminus C_n$,\;then the Cayley graph $\mathbf{Cay}(D_n,X\cup bY)$
is a DSRG with parameters $(2n,n-1,\frac{n}{2}-1,\frac{n}{2}-1,\frac{n}{2})$.\;
\end{thm}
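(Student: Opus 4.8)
The plan is to work inside the group ring $\mathbb{Z}[D_n]$ and verify the defining identity of a DSRG directly. As recalled in the introduction, $\mathbf{Cay}(D_n,S)$ is a DSRG with parameters $(2n,k,\mu,\lambda,t)$ exactly when $|S|=k$ and $\overline{S}^{\,2}=t\,e+\lambda\,\overline{S}+\mu(\overline{D_n}-e-\overline{S})$ in $\mathbb{Z}[D_n]$. With $S=X\cup bY$, the first point is regularity: condition (i) forces $X$ and $X^{(-1)}$ to partition $C_n\setminus\{e,c\}$ (since $e$ and $c$ are the only self-inverse elements of $C_n$), which fixes $|X|$, and then (ii) gives $|Y|=|X|$; hence $|S|=|X|+|Y|$ is as claimed.

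For the quadratic identity I would split $\mathbb{Z}[D_n]=\mathbb{Z}[C_n]\oplus\mathbb{Z}[C_n]\,a$. Writing $b=x^{j}a$ and using $a\,\overline{R}=\overline{R^{(-1)}}\,a$ for $R\subseteq C_n$, one gets $\overline{bY}=\overline{T}\,a$ with $\overline{T}=x^{j}\,\overline{Y^{(-1)}}$, so
\[
\overline{S}^{\,2}=\bigl(\overline{X}^{2}+\overline{T}\,\overline{T^{(-1)}}\bigr)+\bigl(\overline{X}\,\overline{T}+\overline{T}\,\overline{X^{(-1)}}\bigr)\,a .
\]
Matching the $\mathbb{Z}[C_n]$-component and the $\mathbb{Z}[C_n]\,a$-component against those of the right-hand side of the DSRG identity reduces the theorem to two identities inside the commutative ring $\mathbb{Z}[C_n]$. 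It is worth noticing that the translate $x^{j}$ drops out — $\overline{T}\,\overline{T^{(-1)}}=\overline{Y^{(-1)}}\,\overline{Y}$, and $x^{j}$ cancels from the $a$-component — so $b$ is irrelevant and one may as well take $b=a$.

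Next the hypotheses are used to make the pieces explicit. From (iii) we get $\overline{X^{(-1)}}=\overline{Xc}=c\,\overline{X}$; feeding this into (i) gives the compact relation $(1+c)\overline{X}=\overline{C_n}-e-c$, and multiplying by $\overline{X}$ evaluates $\overline{X}^{2}+\overline{X}\,\overline{X^{(-1)}}=(1+c)\overline{X}^{2}$ in closed form. From (ii), $\overline{T}\,\overline{T^{(-1)}}=\overline{Y}\,\overline{Y^{(-1)}}=\overline{X}\,\overline{X^{(-1)}}$, and the cross term collapses to $\overline{T}\bigl(\overline{X}+\overline{X^{(-1)}}\bigr)=\overline{T}\bigl(\overline{C_n}-e-c\bigr)$, which is again explicit by (i). Substituting all of this into the two component identities and comparing coefficients at $e$, at $c$, at the points of $X$ and at the points of $X^{(-1)}$ then pins down the five parameters, which one checks are precisely those in the statement.

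The step I expect to be the main obstacle is the bookkeeping around the order-two subgroup $\{e,c\}$: since $e$ and $c$ are exactly the fixed points of inversion on $C_n$, the coefficients at $e$ and at $c$ do not behave like those at a generic element, and one has to keep careful track of the extra $e+c$ terms produced by $(1+c)\overline{X}^{2}$ and by $c\,\overline{X}$, $c\,\overline{T}$. Apart from that the verification is routine linear algebra over $\mathbb{Z}[C_n]$; alternatively, one could replace the coefficient comparison by evaluating at the irreducible representations of $D_n$, but for this statement the direct group-ring computation is the quickest path.
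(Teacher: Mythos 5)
Your overall strategy --- split $\mathbb{Z}[D_n]=\mathbb{Z}[C_n]\oplus\mathbb{Z}[C_n]a$, reduce the DSRG identity to two identities in the commutative ring $\mathbb{Z}[C_n]$, and observe that the translate $x^{j}$ is irrelevant --- is exactly the right one (it is the content of Lemma \ref{l-4.1}). The problem is that the ``routine'' verification you defer does not close, and in fact cannot: the statement as printed is internally inconsistent, and you assert two things that are false. First, the degree: conditions (i) and (ii) force $X$ and $X^{(-1)}$ to partition $C_n\setminus\{e,c\}$ and $|Y|=|X|$, hence $k=|X|+|Y|=2|X|=n-2$, which is even, whereas the claimed $k=n-1$ is odd; your sentence ``$|S|=|X|+|Y|$ is as claimed'' is therefore already wrong. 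Second, the $a$-component identity fails outright. With $\overline{Y}=\overline{X}$, say, one has $\overline{T}\bigl(\overline{C_n}-e-c\bigr)=|Y|\,\overline{C_n}-\overline{T}-\overline{T}c$, and by (iii) $\overline{T}c=x^{j}\overline{X}$ is supported on a set disjoint from both the support of $\overline{T}=x^{j}\overline{X^{(-1)}}$ and $\{x^{j},x^{j}c\}$; so the left-hand side takes the value $|Y|$ at $x^{j}$ and $x^{j}c$ but $|Y|-1$ at the points of $x^{j}X$, all of which lie outside the support of $\overline{T}$, and hence it can never equal $(\lambda-\mu)\overline{T}+\mu\overline{C_n}$ once $n\geqslant 4$. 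The same disease kills the $C_n$-component: your own identity $(1+c)\overline{X}^{2}=(|X|-1)\overline{C_n}+e+c$ carries an extra $+c$ that no choice of $t,\lambda,\mu$ absorbs, since $c\notin X$.

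What the parity obstruction should have told you is that the hypothesis needs repairing: condition (ii) must read $\overline{Y}=\overline{X}+e$ or $\overline{Y}=\overline{X^{(-1)}}+e$, i.e.\ $Y=X\cup\{e\}$ or $Y=X^{(-1)}\cup\{e\}$. Then $k=(\tfrac{n}{2}-1)+\tfrac{n}{2}=n-1$; moreover $\overline{T}+\overline{T}c=x^{j}\bigl(\overline{X^{(-1)}}+\overline{X}+e+c\bigr)=\overline{C_n}$, so the $a$-component becomes $(\tfrac{n}{2}-1)\overline{C_n}$, forcing $\lambda=\mu=\tfrac{n}{2}-1$; and $\overline{Y}\,\overline{Y^{(-1)}}=\overline{X}\,\overline{X^{(-1)}}+\overline{C_n}-c$, so the $C_n$-component becomes $(\tfrac{n}{2}-1)\overline{C_n}+e$, giving $t-\mu=1$, i.e.\ $t=\tfrac{n}{2}$. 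With that correction your outline carries through exactly as you describe; without it, the theorem you set out to prove is false, and a proof sketch that claims the coefficient comparison ``pins down'' the stated parameters has a genuine gap precisely at the step it declares routine.
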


The following notataion will be used.\;Let $A$ be a multiset together with a \emph{multiplicity function} $\Delta_{A}$,\;where $\Delta_{A}(a)$ counting how many times $a$ occurs in the multiset $A$.\;We say $x$ belongs to $A$\;(i.e.\;$x\in A$)\;if $\Delta_A(x)>0$.\;
In the following,\;$A$ and $B$ are multisets,\;with multiplicity functions $\Delta_{A}$ and $\Delta_{B}$.\;

\begin{itemize}
  \item \textbf{Union,\;$A\uplus B$}:\;the union of multisets $A$ and $B$,\;is defined by $\Delta_{A\uplus B}=\Delta_A+\Delta_B$;
  \item \textbf{Scalar multiplication,\;$n\oplus A$}:\;the scalar multiplication of a multiset $A$ by a natural number $n$ by,\;is defined by $\Delta_{n\oplus A}=n\Delta_A$.
  \item \textbf{Difference,\;$A\setminus B$}:\;the difference of multisets $A$ and $B$,\;is defined by $\Delta_{A\setminus B}(x)=\max\{\Delta_A(x)-\Delta_B(x),0\}$ for any $x\in A$.\;
\end{itemize}
If $A$ and $B$ are usual sets,\;we use $A\cup B$,\;$A\cap B$ and  $A\setminus B$ denote the usual union,\;intersection and difference of $A$ and $B$.\;For example,\;if $A=\{1,2\}$ and $B=\{1,3\}$,\;then $A\uplus B=\{1,1,2,3\}$,\;$A\cup B=\{1,2,3\}$,\;$2\oplus A=\{1,1,2,2\}$,\;$A\setminus B=\{2\}$ and $\{1,1,2,2\}\setminus B=\{1,2,2\}$.\;

Throughout this paper,\;$n$ will denote a fixed positive integer,\;$\mathbb{Z}_n$ is the modulo $n$ residue class ring.\;For
a positive divisor $r$ of $n$ let $r\mathbb{Z}_n$ be the subgroup of the additive group of $\mathbb{Z}_n$ of order $\frac{n}{r}$.\;Let $v$ be a positive divisor of $n$ and $v\mathbb{Z}_n$ is a subgroup of $\mathbb{Z}_n$.\;Let $\pi_v$ be the natural projection from $\mathbb{Z}_n$ to the quotient group $\mathbb{Z}_n/v\mathbb{Z}_n$.\;It is clear that quotient group $\mathbb{Z}_n/v\mathbb{Z}_n$ is isomorphic to the cyclic group $\mathbb{Z}_v$.\;Let $\sigma_v$ be canonical isomorphism map between $\mathbb{Z}_v$ and $\mathbb{Z}_n/v\mathbb{Z}_n$.\;Let $\psi_v=\sigma_v\circ\pi_v$.\;Then $\psi_v$ is a canonical homomorphism
\begin{equation}\label{2.8}
\psi_{n,v}:\mathbb{Z}_n\rightarrow \mathbb{Z}_v,\;i+n\mathbb{Z}\mapsto \sigma_v(\pi_v(i))=i(mod\;v)+v\mathbb{Z}.
\end{equation}
with $\ker \psi_{n,v}=v\mathbb{Z}_n$.\;We also define $\phi_v:\mathbb{Z}_v\rightarrow\frac{n}{v}\mathbb{Z}_n,\;i+v\mathbb{Z}\rightarrow \frac{n}{v}i+n\mathbb{Z}$.\;It is clearly that $\phi_v$ gives
an isomorphism between $\mathbb{Z}_v$ and $\frac{n}{v}\mathbb{Z}_n$.

For a multisubset $A$ of $\mathbb{Z}_n$,\;let $$x^A=\biguplus_{i\in A}\Delta_A(i)\oplus\{x^i\},\;$$
then $C_n=x^{\mathbb{Z}_n}$ and $x^A$ is a multisubset of $C_n$.\;Let $A_1,A_2$ be the mutlisubsets of $\mathbb{Z}_n$ and $i\in \mathbb{Z}_n$,\;let $iA_1=\{ia_1|a_1\in A_1\}$,\;$i+A_1=\{i+a_1|a_1\in A_1\}$,\;and $i-A_1=\{i-a_1|a_1\in A_1\}$.\;The sum of mutlisubsets $A_1$ and $A_2$ is $A_1+A_2=\{a_1+a_2|a_1\in A_1,a_2\in A_2\}$,\;and the multiplicity function of $A_1+A_2$ is
\[\Delta_{A_1+A_2}(c)=\sum_{a_1+a_2=c}\Delta_{A_1}(a_1)\Delta_{A_2}(a_2)\]
for any $c\in \mathbb{Z}_n$.\;

Hence each subset $S$ of the dihedral group $D_n=\langle x,a|x^n=1,a^2=1,ax=x^{-1}a\rangle$ can be written in the form $S=x^X\cup (x^Ya)$ for some subsets $X,Y\subseteq \mathbb{Z}_n$,\;we denote the Cayley graph $\mathbf{Cay}(D_n, x^X\cup x^Ya)$ by $Dih(n,X,Y)$.\;Let $Dih(n,X,Y)$ be a directed strongly regular dihedrant.\;We list some properties in the following.\;
\begin{itemize}
%  \item We can assume $|X|\leqslant|Y|$.\;Otherwise,\;if one has $|X|>|Y|$ ,\;we just need to consider its complement,\;i.e.,\;$Dih(n,X',Y')$,\;where $X'=\mathbb{Z}_{n}\setminus \{0\}\setminus X$ and $Y'=\mathbb{Z}_{n}\setminus Y$.\;Then $|X'|\leqslant|Y'|$;
  \item Its complement $Dih(n,\mathbb{Z}_{n}\setminus \{0\}\setminus X,\mathbb{Z}_{n}\setminus Y)$,\;is also a DSRG.\;
  \item Observe that $Dih(n,X,Y)$ is a digraph without loops,\;then $0\not\in X$,\;$X\neq -X$.
  \item The Lemma 2.5 in \cite{MI} asserts that the dihedrants $Dih(n,bX,b'+bY)$ and $Dih(n,X,Y)$ are isomorphic for any $b\in \mathbb{Z}_n^{\ast}$,\;$b'\in\mathbb{Z}_n$.\;Hence,\;we can also assume $0\in Y$.\;
\end{itemize}

For an odd prime $p$,\;the characterization of directed strongly regular dihedrant $Dih(p;X,Y)$ has been achieved in \cite{K2},\;which was presented in Theorem \ref{t-1.2}.\;In this paper,\;we characterize some certain directed strongly regular dihedrants $Dih(p^\alpha,X,Y)$ for prime power $p^\alpha$.\;We obtain the following results.\;

\begin{thm}\label{t-1.4}For an odd prime $p$ and a positive integer $\alpha$,\;then the dihedrant $Dih(p^\alpha,X,X)$ is a DSRG if and only if
$X=\psi_{\gamma}^{-1}(H)$ for some $1\leqslant\gamma\leqslant\alpha$ and a subset $H\subseteq \mathbb{Z}_{p^\gamma}$ satisfying the following conditions:\\
(i)\;$H\cup (-H)=\mathbb{Z}_{p^\gamma}\setminus\{0\}$.\\
(ii)\;$H\cap (-H)=\emptyset$.
\end{thm}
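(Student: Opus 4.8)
The plan is to translate the DSRG condition on $Dih(p^\alpha,X,X)$ into a multiset identity in the group ring $\mathbb{Z}[\mathbb{Z}_{p^\alpha}]$, analyze it via the characters of $\mathbb{Z}_{p^\alpha}$ (equivalently, $p^\alpha$-th roots of unity), and use the tower structure of cyclic $p$-groups together with algebraic number theory in $\mathbb{Z}[\zeta_{p^j}]$ to pin down $X$. First I would recall that, by Theorem~\ref{t-1.1} (with $Y=X$, so automatically $\overline{Y}\,\overline{Y^{(-1)}}=\overline{X}\,\overline{X^{(-1)}}$ and $\varepsilon=0$), any $X\subseteq\mathbb{Z}_{p^\alpha}$ satisfying (i) $X\cup(-X)=\mathbb{Z}_{p^\alpha}\setminus\{0\}$ and (ii) $X\cap(-X)=\emptyset$ makes $Dih(p^\alpha,X,X)$ a DSRG; and more generally, if $H\subseteq\mathbb{Z}_{p^\gamma}$ satisfies (i),(ii) then $X=\psi_{p^{\alpha-\gamma}}^{-1}(H)$ — wait, I need the inflation $\psi_\gamma^{-1}(H)=\{i\in\mathbb{Z}_{p^\alpha}: i \bmod p^\gamma\in H\}$ — also satisfies the two conditions on $\mathbb{Z}_{p^\alpha}$ (since $-1$ acts compatibly with reduction mod $p^\gamma$ and $0$ pulls back to $p^\gamma\mathbb{Z}_{p^\alpha}$, which is disjoint from $\psi_\gamma^{-1}(H)$ because $0\notin H$). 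So the ``if'' direction is immediate from Theorem~\ref{t-1.1}; the content is the ``only if'' direction.

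For the converse, suppose $Dih(p^\alpha,X,X)$ is a DSRG with parameters $(2p^\alpha,k,\mu,\lambda,t)$. Using the block form of the adjacency matrix with respect to the index-$2$ subgroup $C_{p^\alpha}$, condition (ii) $A^2=tI+\lambda A+\mu(J-I-A)$ becomes a pair of identities in $\mathbb{Z}[\mathbb{Z}_{p^\alpha}]$: writing $\overline{X}$ for the group-ring element $\sum_{i\in X}x^i$, the ``diagonal'' block gives $\overline{X}\,\overline{X}+\overline{X}\,\overline{(-X)}=(t-\mu)e+(\lambda-\mu)\overline{X}+\mu\,\overline{\mathbb{Z}_{p^\alpha}}$, and (since $Y=X$) the off-diagonal block gives $\overline{X}\,\overline{X}+\overline{X}\,\overline{(-X)}$ again up to the same shape, so the two collapse to essentially one equation. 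Apply every non-principal character $\chi$ of $\mathbb{Z}_{p^\alpha}$: then $\chi(\overline{\mathbb{Z}_{p^\alpha}})=0$ and $|\chi(\overline{X})|^2+\chi(\overline{X})\overline{\chi(\overline{X})}$-type terms yield, after simplification, $|\chi(\overline{X})|^2 = t-\mu$ together with a linear relation forcing $\chi(\overline{X})+\chi(\overline{-X})=\chi(\overline{X})+\overline{\chi(\overline{X})}\in\{\text{constant}\}$; combined with $\overline{X}+\overline{-X}=\overline{\mathbb{Z}_{p^\alpha}}-e$ — which already follows from the $JA=kJ$ regularity plus the no-loops fact $0\notin X$ once one checks $\lambda,\mu$ match Theorem~\ref{t-1.1} — one gets $\chi(\overline{X})+\overline{\chi(\overline{X})}=-1$ for all non-principal $\chi$. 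Hence $\mathrm{Re}\,\chi(\overline{X})=-1/2$ for every non-trivial character, and $|\chi(\overline{X})|^2=t-\mu$ is a fixed rational integer independent of $\chi$.

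The heart of the argument is then purely number-theoretic. Let $\gamma$ be minimal such that $\overline{X}$ is constant on cosets of $p^\gamma\mathbb{Z}_{p^\alpha}$ is \emph{not} yet known — instead, I would argue: for characters $\chi$ of order $p^j$, $\chi(\overline{X})$ lies in $\mathbb{Z}[\zeta_{p^j}]$, has real part $-1/2$, and has fixed absolute value squared $N:=t-\mu$. Write $2\chi(\overline{X})=-1+\beta$ with $\beta$ purely imaginary in $\mathbb{Q}(\zeta_{p^j})$, so $\beta=-\bar\beta$ and $\beta^2=-1-4N+ $ (something) — more precisely $(2\chi(\overline{X})+1)(\overline{2\chi(\overline{X})+1})=2-4N+\cdots$; the key point is that $\chi(\overline{X})$ is an algebraic integer all of whose conjugates (under $\mathrm{Gal}$) still have real part $-1/2$ and absolute value $\sqrt N$, because the Galois group permutes the non-principal characters of a fixed order. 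An element of $\mathbb{Z}[\zeta_{p^j}]$ all of whose conjugates lie on the vertical line $\mathrm{Re}=-1/2$ is extremely restricted: taking $j\ge 2$, the line $\mathrm{Re}=-1/2$ meets the circle $|z|=\sqrt N$ in at most two points, yet there are $\varphi(p^j)\ge p(p-1)>2$ conjugates, so they cannot all be distinct — forcing $\mathbb{Q}(\chi(\overline{X}))$ to be a proper subfield, i.e. $\chi(\overline{X})$ is fixed by a nontrivial subgroup of $\mathrm{Gal}(\mathbb{Q}(\zeta_{p^j})/\mathbb{Q})$. Pushing this through the subgroup lattice of the cyclic group $\mathbb{Z}_{p^\alpha}$ (use that the set of characters on which $\overline{X}$ ``descends'' is closed upward in the tower), one concludes there is a unique maximal $\gamma$ with $1\le\gamma\le\alpha$ such that $\chi(\overline{X})=0$ for all $\chi$ of order $>p^\gamma$ while characters of order exactly $p^\gamma$ are nonzero; equivalently $\overline{X}=\psi_\gamma^{-1}(\overline{H})$ for a genuine (not multi-) subset $H\subseteq\mathbb{Z}_{p^\gamma}$. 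Finally, for $H$ itself the conditions $\mathrm{Re}\,\chi(\overline{H})=-1/2$ on all nonprincipal $\chi$ of $\mathbb{Z}_{p^\gamma}$, together with $\overline{H}+\overline{-H}=\overline{\mathbb{Z}_{p^\gamma}}-e$, force $\overline{H}\cdot\overline{-H}$-type relations that are equivalent to (i) and (ii); here one invokes the uniqueness of the representation $\overline{C_{p^\gamma}}-e$ as $\overline{H}+\overline{H^{(-1)}}$ with $H$ a set of size $(p^\gamma-1)/2$ disjoint from $-H$, exactly as in the hypotheses of Theorem~\ref{t-1.1}.

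I expect the main obstacle to be the descent step: showing cleanly that ``real part $-1/2$ and constant absolute value for all Galois conjugates'' forces $\chi(\overline{X})$ to live in $\mathbb{Z}[\zeta_p]$ (not just some intermediate field), and then assembling the per-character information into the single statement $X=\psi_\gamma^{-1}(H)$. The geometric count ($\le 2$ intersection points vs. $\varphi(p^j)$ conjugates) handles $j\ge 2$ in one stroke provided one is careful that genuinely distinct conjugates are produced — which needs the primitivity of $\zeta_{p^j}$ in $\chi(\overline{X})$, i.e. exactly the failure one is trying to rule out, so the argument must be organized as an induction down the tower rather than a single estimate. Everything else — the group-ring translation, the character evaluation, and the $j=1$ base case matching Theorem~\ref{t-1.1}'s conditions — is routine bookkeeping with the notation $\psi_{n,v}$, $x^A$, $A+B$ set up in the preliminaries.
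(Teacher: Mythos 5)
Your proposal has two genuine gaps, both traceable to the same premature assumption that $X\uplus(-X)=\mathbb{Z}_{p^\alpha}\setminus\{0\}$. First, the ``if'' direction: when $\gamma<\alpha$, the set $X=\psi_{\gamma}^{-1}(H)$ satisfies $X\cup(-X)=\psi_{\gamma}^{-1}(\mathbb{Z}_{p^\gamma}\setminus\{0\})=\mathbb{Z}_{p^\alpha}\setminus p^{\gamma}\mathbb{Z}_{p^\alpha}$, which is a proper subset of $\mathbb{Z}_{p^\alpha}\setminus\{0\}$; hypothesis (i) of Theorem \ref{t-1.1} therefore fails, and that theorem does not cover these cases. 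One needs a direct group-ring computation as in Construction \ref{c-5.1}, which produces parameters $\left(2n,\,n-l,\,\frac{n-l}{2},\,\frac{n-l}{2}-l,\,\frac{n-l}{2}\right)$ with $l=p^{\alpha-\gamma}$ --- different from those of Theorem \ref{t-1.1} whenever $l>1$.

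Second, and more seriously, in the converse you assert that $\overline{X}+\overline{X^{(-1)}}=\overline{C_{p^\alpha}}-e$ ``already follows from $JA=kJ$ plus $0\notin X$.'' It does not: regularity of a Cayley graph is automatic and carries no such information, and this identity is in fact \emph{false} in exactly the cases $\gamma<\alpha$ that the theorem must capture (there $\overline{X}+\overline{X^{(-1)}}=\overline{C_{p^\alpha}}-\overline{x^{p^{\gamma}\mathbb{Z}_{p^\alpha}}}$). Consequently the assertion $\mathrm{Re}\,\chi(\overline{X})=-1/2$ for every non-principal $\chi$ is wrong, and the Galois-theoretic descent built on it (constant real part and constant modulus for all conjugates) cannot get started; note also that $Y=X$ forces $t=\mu$ (Lemma \ref{l-4.4}), so your intermediate claim $|\chi(\overline{X})|^2=t-\mu$ would give $\chi(\overline{X})=0$ for all non-principal $\chi$, which is absurd. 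What the DSRG condition actually yields is a dichotomy: for each $z\neq0$, either $\mathbf{r}(z)=0$ or $\mathbf{r}(z)+\overline{\mathbf{r}(z)}=\lambda-\mu$, i.e.\ $(\mathcal{F}\Delta_{X\uplus(-X)})(z)\in\{0,\lambda-\mu\}$ (Lemma \ref{l-4.5}). The real work --- absent from your proposal --- is to deduce the structure of $X\uplus(-X)$ from this two-valuedness: by the rationality criterion (Lemma \ref{l-2.2}) it is an integer combination of $\mathbb{Z}_{p^\alpha}^{\ast}$-orbits with coefficients in $\{0,1,2\}$, and a case analysis with Ramanujan sums (Lemma \ref{l-3.3}) rules out coefficient $2$ for odd $p$, giving $X\uplus(-X)=\mathbb{Z}_{p^\alpha}\setminus p^{\alpha-\beta}\mathbb{Z}_{p^\alpha}$, which is precisely conditions (i)--(ii). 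Only then does substituting back into the functional equation force $\mathbf{r}(z)=0$ for $z\notin p^{\beta}\mathbb{Z}_{p^\alpha}$, whence $X=\psi_{\alpha-\beta}^{-1}(H)$ by Lemma \ref{l-3.7}. Your line-meets-circle counting argument is an attractive tool for constraining algebraic integers with prescribed conjugates, but its hypotheses are exactly what fail here.
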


\begin{thm}\label{t-1.5}For a positive integer $\alpha$,\;the dihedrant $Dih(2^\alpha,X,X)$ is a DSRG if and only if
$X=\psi_{\gamma}^{-1}(H)$ for some $2\leqslant\gamma\leqslant\alpha$ and a subset $H\subseteq \mathbb{Z}_{p^\gamma}$ satisfying the following conditions:\\
(i)\;$H\cup (-H)=(\mathbb{Z}_{p^\gamma}\setminus\{0\})\uplus\{2^{\gamma-1}\}$.\\
(ii)\;$H\cap (2^{\gamma-1}+H)=\emptyset$.
\end{thm}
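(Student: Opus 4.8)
The plan is to reduce Theorem~\ref{t-1.5} to a condition on the character values $\chi(X)$, mirroring the argument for the odd prime case of Theorem~\ref{t-1.4}. Write $n=2^{\alpha}$; for $j\in\mathbb{Z}_{n}$ let $\chi_{j}$ be the character $c\mapsto\zeta_{n}^{cj}$, and for a (multi)set $X$ put $\chi_{j}(X)=\sum_{i\in X}\zeta_{n}^{ij}$ (similarly $\chi'_{j'}$ on $\mathbb{Z}_{2^{\gamma}}$). First I would push the identity $A^{2}=tI+\lambda A+\mu(J-I-A)$ into $\mathbb{C}[D_{n}]$ and split it by the irreducible representations of $D_{n}$: for $\chi_{j}\neq\overline{\chi_{j}}$ the corresponding $2\times2$ block of $\overline{x^{X}\cup x^{X}a}$ is $\left(\begin{smallmatrix}\chi_{j}(X)&\overline{\chi_{j}(X)}\\ \chi_{j}(X)&\overline{\chi_{j}(X)}\end{smallmatrix}\right)$, a matrix of rank $\leqslant1$ with trace $\chi_{j}(X)+\overline{\chi_{j}(X)}$, while the four $1$-dimensional blocks are $2|X|,0,0,2\chi_{n/2}(X)$. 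Since $J$ annihilates every nontrivial isotypic component, each such $2\times2$ block $B$ obeys $B^{2}=(\lambda-\mu)B+(t-\mu)I_{2}$, and because a rank-$1$ block is never a nonzero scalar matrix this forces $t=\mu$ together with the following: for every nontrivial character $\chi$ of $\mathbb{Z}_{n}$,
\begin{equation}\tag{$\ast$}
\chi(X)=0\qquad\text{or}\qquad \chi(X)+\overline{\chi(X)}=\lambda-\mu=:d.
\end{equation}
The trivial component gives the degree relation, which with $t=\mu$ and $k=2|X|$ reads $\mu=k(k-d)/(2n)$; and, for a Cayley graph, $t$ equals the number of digons at a vertex, here $|X|+|X\cap(-X)|$. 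Thus it remains to classify $X\subseteq\mathbb{Z}_{n}\setminus\{0\}$ with $X\neq-X$ satisfying $(\ast)$.

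For the ``if'' direction I would first record that, since (i) already forces $|H|=2^{\gamma-1}$, the pair (i)--(ii) is equivalent to: $\chi'_{j'}(H)=0$ for every even $j'\neq0$, and $\chi'_{j'}(H)+\overline{\chi'_{j'}(H)}=-2$ (hence $\chi'_{j'}(H)\neq0$) for every odd $j'$. A character $\chi_{j}$ of $\mathbb{Z}_{n}$ that is nontrivial on $\ker\psi_{\gamma}=2^{\gamma}\mathbb{Z}_{n}$ kills $X=\psi_{\gamma}^{-1}(H)$, whereas the remaining ones have $2^{\alpha-\gamma}\mid j$ and satisfy $\chi_{j}(X)=2^{\alpha-\gamma}\chi'_{j'}(H)$ with $j'=j/2^{\alpha-\gamma}$; hence $(\ast)$ holds with $d=-2^{\alpha-\gamma+1}$, and the degree relation gives $k=2^{\alpha}$, $t=\mu=2^{\alpha-1}+2^{\alpha-\gamma}$, $\lambda=2^{\alpha-1}-2^{\alpha-\gamma}$. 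By the block decomposition of the first step these parameters do satisfy the matrix identity (the only checks not automatic are $(\ast)$ and $2\chi_{n/2}(X)\in\{0,d\}$, both just verified), and $t<k$ holds exactly when $\gamma\geqslant2$, so the graph is a genuine DSRG precisely in that case.

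The ``only if'' direction carries the weight. Assume $(\ast)$. Since $\mathrm{Gal}(\mathbb{Q}(\zeta_{n})/\mathbb{Q})\cong(\mathbb{Z}/n)^{*}$ sends $\chi_{j}(X)$ to $\chi_{aj}(X)$, the set $T=\{j\neq0:\chi_{j}(X)\neq0\}$ is a union of classes of indices of fixed $2$-adic valuation $v_{2}$ (and $T\neq\emptyset$, else $X\in\{\emptyset,\mathbb{Z}_{n}\}$). With $m=\min\{v_{2}(j):j\in T\}$ one has $\chi_{j}(X)=0$ for $v_{2}(j)<m$, so $X$ is a union of cosets of $2^{\alpha-m}\mathbb{Z}_{n}$, i.e.\ $X=\psi_{\gamma}^{-1}(H)$ with $\gamma=\alpha-m$ and $H=\psi_{\gamma}(X)$; here $H\neq-H$, $\gamma\geqslant2$ (otherwise $H=\{1\}=-H$ in $\mathbb{Z}_{2}$), and by minimality of $m$ and the Galois action inside $\mathbb{Z}_{2^{\gamma}}$, $\chi'_{j'}(H)\neq0$ and $\chi'_{j'}(H)+\overline{\chi'_{j'}(H)}=d':=d/2^{\alpha-\gamma}$ for all odd $j'$. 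Averaging this over odd $j'$ and comparing with the direct value $\sum_{j'\ \mathrm{odd}}\chi'_{j'}(H)=-2^{\gamma-1}\cdot[\,2^{\gamma-1}\in H\,]$ gives $d'=-2\cdot[\,2^{\gamma-1}\in H\,]\in\{0,-2\}$, and $d'=0$ is excluded since it would make $\chi'_{j'}(H)+\overline{\chi'_{j'}(H)}=0$ for all $j'\neq0$, forcing $H=\emptyset$; so $d'=-2$ and $2^{\gamma-1}\in H$. Now I would feed this into the degree relation: combining $t=\mu$, $t=|X|+|X\cap(-X)|$, $\mu=k(k-d)/(2n)$ and $X\cap(-X)=\psi_{\gamma}^{-1}(H\cap(-H))$ with $|X|=|H|\,2^{\alpha-\gamma}$ and $d=-2^{\alpha-\gamma+1}$ yields
\[
|H\cap(-H)|=\frac{|H|\,\bigl(2|H|+2-2^{\gamma}\bigr)}{2^{\gamma}}.
\]
The right side is a non-negative integer which is $\geqslant1$ (it counts $2^{\gamma-1}$) and $<|H|$ (as $H\neq-H$); these bounds force $2^{\gamma-1}\leqslant|H|\leqslant2^{\gamma}-2$, while integrality forces $2^{\gamma-1}\mid|H|(|H|+1)$, and in that range this leaves only $|H|=2^{\gamma-1}$, hence $|H\cap(-H)|=1$ and $H\cap(-H)=\{2^{\gamma-1}\}$. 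Together with $|H|=2^{\gamma-1}$ and $0\notin H$, this says $H$ picks exactly one point from each pair $\{i,-i\}$ with $i\neq0,2^{\gamma-1}$ and contains $2^{\gamma-1}$, i.e.\ condition (i). Substituting (i) back into $(\ast)$ for $H$ gives $\chi'_{j'}(H)+\overline{\chi'_{j'}(H)}=0$ for every even $j'\neq0$, and then $(\ast)$ forces $\chi'_{j'}(H)=0$ there; consequently $H\uplus(2^{\gamma-1}+H)=\mathbb{Z}_{2^{\gamma}}$, which is condition (ii).

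Apart from the representation-theoretic bookkeeping of the first step, the heart of the proof is the simultaneous pinning-down, in the necessity direction, of the level $\gamma$ (from the Galois orbit structure of $\{\chi_{j}(X)\}$), of the constant $d'=-2$ (from the averaging identity, where the extra involution $2^{\gamma-1}$ is precisely why one gets $-2$ rather than the $-1$ of the odd-prime case), and of $|H|=2^{\gamma-1}$ (from the size-and-divisibility squeeze on $|H\cap(-H)|$ supplied by the degree relation). I expect this last squeeze -- which is absent from Theorem~\ref{t-1.4} and is the genuinely $2$-specific point -- to be the most delicate step. Should running it in closed form prove awkward, an alternative is induction on $\alpha$: whenever every odd character vanishes on $X$ one has $X=\psi_{\alpha-1}^{-1}(\psi_{\alpha-1}(X))$ and $Dih(2^{\alpha-1},\psi_{\alpha-1}(X),\psi_{\alpha-1}(X))$ is again a genuine DSRG to which the inductive hypothesis applies; the remaining ``primitive'' case (where $\chi'_{j'}(H)\neq0$ for all odd $j'$) is the base and is handled exactly as above.
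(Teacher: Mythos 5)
Your proof is correct, but in the crucial ``only if'' direction it takes a genuinely different route from the paper's. Both arguments begin by reducing the DSRG condition to the same character dichotomy (your $(\ast)$: for each nontrivial $\chi$, either $\chi(X)=0$ or $\chi(X)+\overline{\chi(X)}=\lambda-\mu$, together with $t=\mu$); you obtain it from the $2\times2$ irreducible representations of $D_n$, the paper from the group-ring/Fourier computation of Lemmas \ref{l-4.1}--\ref{l-4.4} --- these are equivalent. The divergence is in how the structure of $X$ is extracted. The paper first determines the orbit decomposition of $U_X=X\uplus(-X)$ inside $\mathbb{Z}_{2^\alpha}$ via Lemma \ref{l-3.4}, a lengthy case analysis on which $\mathbb{Z}_{2^\alpha}^\ast$-orbits occur with multiplicity $2$, driven by evaluating Ramanujan sums at powers of $2$; only then does it show $\mathbf{r}$ vanishes off a subgroup, descend to $\mathbb{Z}_{2^\gamma}$, and invoke Lemma \ref{l-3.4} a second time at the quotient level, with the degenerate case $X\cap(-X)=\emptyset$ excluded separately (Lemma \ref{l-6.2}). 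You instead descend first (Galois-orbit argument locating the minimal $2$-adic level of nonvanishing characters, plus the coset lemma), pin down $\lambda-\mu=-2^{\alpha-\gamma+1}$ and $2^{\gamma-1}\in H$ by a single Ramanujan-sum averaging identity, and then determine $|H|=2^{\gamma-1}$ and $|H\cap(-H)|=1$ by an arithmetic squeeze combining Duval's degree equation with $t=\mu=|X|+|X\cap(-X)|$. This bypasses the orbit case analysis entirely and subsumes Lemma \ref{l-6.2} (since $d'=-2$ already forces $2^{\gamma-1}\in H\cap(-H)$). The trade-off: the paper's Lemma \ref{l-3.4} is a reusable structural statement about sets satisfying condition (\ref{assumption}) (and is indeed reused at the quotient level and in Lemma \ref{l-4.5}), whereas your counting squeeze leans on the dihedral-specific relations $k=2|X|$ and $t=|X|+|X\cap(-X)|$, making it shorter and more elementary here but less portable. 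I verified the delicate steps --- the averaging identity $\sum_{j'\,\mathrm{odd}}\chi'_{j'}(H)=-2^{\gamma-1}[\,2^{\gamma-1}\in H\,]$, the integrality constraint $2^{\gamma-1}\mid |H|(|H|+1)$ in the range $2^{\gamma-1}\leqslant|H|\leqslant 2^{\gamma}-2$, and the passage from $|H|=2^{\gamma-1}$, $H\cap(-H)=\{2^{\gamma-1}\}$ to conditions (i) and (ii) --- and they all check out.
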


\begin{thm}\label{t-1.6}The dihedrant $Dih(p^\alpha,X,Y)$ is a DSRG with $X\subset Y$ and $Y\setminus X$ is a union of some $\mathbb{Z}_{p^\alpha}^\ast$-orbits if and only if $Y=\psi_{\gamma}^{-1}(H)$,\;$X=Y\setminus\{0\}$ or $X=Y\setminus p^\gamma\mathbb{Z}_{p^\alpha}$
for some $1\leqslant\gamma\leqslant\alpha$ and a subset $H\subseteq \mathbb{Z}_{p^\gamma}$ such that $H\uplus(-H)=\mathbb{Z}_{p^\gamma}\uplus \{0\}$.\;
\end{thm}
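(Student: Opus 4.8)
\textbf{Proof proposal for Theorem \ref{t-1.6}.}

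The plan is to reduce the DSRG condition on the dihedrant $Dih(p^\alpha,X,Y)$ to a system of equations in the group ring $\mathbb{Z}[C_n]$ (equivalently, in multisets over $\mathbb{Z}_{p^\alpha}$), and then analyze that system using the characters of $C_{p^\alpha}$ and the arithmetic of cyclotomic fields. Writing $S = x^X \cup x^Y a$, we have in $\mathbb{Z}[D_n]$ that $S = x^X + x^Y a$; squaring and separating the part lying in $C_n$ from the part in $C_n a$, the condition $A^2 = tI + \lambda A + \mu(J - I - A)$ translates (as in the proofs of Theorems \ref{t-1.1}--\ref{t-1.3}) into a pair of identities, one of the form $x^X x^X + x^Y x^{-Y} = (\text{something})\,x^{\mathbb{Z}_n} + (\text{diagonal corrections})$ coming from the $C_n$-part, and one of the form $x^X x^Y + x^Y x^{-X} = (\text{something})\,x^{\mathbb{Z}_n}$ coming from the $C_na$-part. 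Here the hypotheses $X\subset Y$ and $Y\setminus X$ being a union of $\mathbb{Z}_{p^\alpha}^\ast$-orbits are the key structural simplifications: they force $Y$ to be ``almost'' the full group, so $Y\setminus X$ is one of $\{0\}$, $p^j\mathbb{Z}_{p^\alpha}$, or their unions, and this drastically limits the Fourier spectrum of $Y$.

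The first main step is to compute $\widehat{Y}(\chi) := \sum_{i\in Y}\chi(i)$ for every character $\chi$ of $\mathbb{Z}_{p^\alpha}$, using the defining equations. Because $Y\setminus X$ is a union of $\mathbb{Z}_{p^\alpha}^\ast$-orbits, $Y$ is rational (its indicator is constant on orbits), so each $\widehat{Y}(\chi)$ is a rational integer depending only on the order of $\chi$. Plugging a nontrivial character into the $C_na$-equation yields $\widehat{X}(\chi)\widehat{Y}(\chi) + \widehat{Y}(\chi)\overline{\widehat{X}(\chi)} = 0$, i.e. $\widehat{Y}(\chi)\,\mathrm{Re}\,\widehat{X}(\chi) = 0$; and from the $C_n$-equation one gets $|\widehat{X}(\chi)|^2 + |\widehat{Y}(\chi)|^2 = $ (a constant $+$ a term supported on $\chi$ of specific order). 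The second step is a case analysis on which characters $\chi$ have $\widehat{Y}(\chi)\neq 0$: defining $\gamma$ so that the subgroup on which $Y$ ``collapses'' has index $p^\gamma$, one shows $Y$ must be pulled back from $\mathbb{Z}_{p^\gamma}$, i.e. $Y = \psi_{\gamma}^{-1}(H)$ for some multiset $H$ on $\mathbb{Z}_{p^\gamma}$, and then the equation $|\widehat{X}(\chi)|^2+|\widehat{Y}(\chi)|^2=\text{const}$ at characters of order dividing $p^\gamma$ forces $H\uplus(-H)=\mathbb{Z}_{p^\gamma}\uplus\{0\}$ (the $\{0\}$ appearing because $0\notin X$ but possibly $0\in Y$, and $X=Y\setminus\{0\}$ or $X=Y\setminus p^\gamma\mathbb{Z}_{p^\alpha}$ are exactly the two ways to delete a rational ``$0$-fiber'' from $Y$). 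Conversely, one checks directly that these $X,Y$ satisfy both group-ring identities, hence give a DSRG; the parameters can be read off from $\widehat{S}(\chi_0)=k$ and the constant term.

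The hard part will be the case analysis in the second step: ruling out ``mixed'' configurations where $\widehat{Y}(\chi)=0$ for some but not all characters of a given order, and showing that the only surviving possibility is a clean pullback $\psi_\gamma^{-1}(H)$ with the stated condition on $H$. This is where the prime-power structure of $p^\alpha$ is essential — the subgroup lattice of $\mathbb{Z}_{p^\alpha}$ is a chain, so ``union of $\mathbb{Z}_{p^\alpha}^\ast$-orbits'' plus the Fourier constraints leaves only finitely many shapes for $Y$, and one eliminates all but one. A secondary subtlety is bookkeeping the diagonal/loop corrections (the $t$, the $-I$, the $0\notin X$ constraint and the normalization $0\in Y$ inherited from the isomorphism lemma of \cite{MI}) so that the two allowed choices $X = Y\setminus\{0\}$ and $X = Y\setminus p^\gamma\mathbb{Z}_{p^\alpha}$ emerge correctly rather than some spurious third option; one must also verify that in both cases $X\ne -X$ and $0\notin X$, so that the resulting digraph is a genuine loopless DSRG.
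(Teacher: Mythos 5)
Your high-level plan --- rewrite the DSRG condition as two identities in $\mathbb{Z}[C_n]$, apply the characters of $C_{p^\alpha}$, and run a case analysis on the support of $\mathcal{F}\Delta_Y$ until $Y$ is forced to be a pullback $\psi_\gamma^{-1}(H)$ --- is the same route the paper takes. But the specific equations you propose to analyse are not the right ones, and the analysis as sketched fails at its first step.

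Concretely: (1) The $C_na$-component of $A^2=tI+\lambda A+\mu(J-I-A)$ is $\overline{x^X}\,\overline{x^Y}+\overline{x^Y}\,\overline{x^{-X}}=(\lambda-\mu)\overline{x^Y}+\mu\overline{C_n}$, not a multiple of $\overline{C_n}$; a nontrivial character therefore gives $\mathbf{t}(z)\bigl(2\,\mathrm{Re}\,\mathbf{r}(z)-(\lambda-\mu)\bigr)=0$ rather than your $\widehat{Y}(\chi)\,\mathrm{Re}\,\widehat{X}(\chi)=0$. Since $\lambda-\mu\neq0$ for every graph in this family (it equals $l-2$ or $-l$ with $l$ a power of the odd prime $p$), your dichotomy is false and the case analysis built on it collapses. (2) The claim that $Y$ is rational because $Y\setminus X$ is a union of $\mathbb{Z}_{p^\alpha}^\ast$-orbits is a non sequitur: only $\Delta_Y-\Delta_X$ is constant on orbits, and in every genuine example $X\neq -X$, so $\widehat{Y}(\chi)$ is not even real, let alone an integer depending only on the order of $\chi$. (Relatedly, the character value of the $C_n$-part is $\mathbf{r}(z)^2+|\mathbf{t}(z)|^2$, not $|\mathbf{r}(z)|^2+|\mathbf{t}(z)|^2$.) The object that really is integer-valued is $\mathbf{w}=\mathbf{r}-\mathbf{t}$: subtracting the two Fourier identities gives $\mathbf{w}^2=(t-\mu)+(\lambda-\mu)\mathbf{w}$, so $\mathbf{w}$ takes values in the eigenvalues $\{\rho,\sigma\}\subseteq\mathbb{Z}$, and this is what pins $Y\setminus X$ down to $\{0\}$ or a subgroup $p^\beta\mathbb{Z}_{p^\alpha}$. (3) Your sketch never identifies the tool that makes the decisive step work: when $Y\setminus X=p^\beta\mathbb{Z}_{p^\alpha}$ with $\beta<\alpha$ one obtains only the congruence $(\mathcal{F}\Delta_{Y\uplus(-Y)})(z)\equiv0\pmod{p^{\alpha-\beta}}$, and one needs Ma's lemma on group-ring elements with divisible character sums (Lemma \ref{l-3.5}/\ref{l-3.6}) together with Lemma \ref{l-3.7} to conclude that $Y$ is a union of cosets of $p^{\beta}\mathbb{Z}_{p^\alpha}$; the remaining case $Y=X\cup\{0\}$ is settled in the paper by passing to the complementary DSRG and invoking Theorem \ref{t-6.1}. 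Without these ingredients the ``hard part'' you defer is not merely hard --- it is not reachable from the equations you wrote down.
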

\section{Preliminary}

\subsection{Properties of DSRG}
Duval \cite{A} developed necessary conditions on the parameters of $(n,k,\mu,\lambda,t)$-DSRG and calculated the spectrum of a DSRG.\;
\begin{proposition}(see \cite{A})\label{p-2.1}
A DSRG with parameters $( n,k,\mu ,\lambda ,t)$ with $0<t<k$ satisfy
\begin{equation}\label{2.1}
k(k+(\mu-\lambda ))=t+\left(n-1\right)\mu,
\end{equation}
\[{{d}^{2}}={{\left( \mu -\lambda  \right)}^{2}}+4\left(t-\mu\right)\text{,}d|2k-(\lambda-\mu)(n-1),\]
\[\frac{2k-(\lambda-\mu)(n-1)}{d}\equiv n-1(mod\hspace{2pt}2)\text{,}\left|\frac{2k-(\lambda-\mu)(n-1)}{d}\right|\leqslant n-1,\]
where d is a positive integer, and
$$ 0\leqslant \lambda <t<k,0<\mu\leqslant t<k,-2\left( k-t-1 \right)\le \mu -\lambda \leqslant 2\left( k-t \right).$$
\end{proposition}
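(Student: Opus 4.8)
The plan is to argue entirely with the adjacency matrix $A$ — using the two defining relations (i)\,$AJ=JA=kJ$ and (ii)\,$A^{2}=tI+\lambda A+\mu(J-I-A)$ together with the elementary facts that $A$ has zero diagonal (no loops), so $\operatorname{tr}A=0$, and that each diagonal entry of $A^{2}$ counts the $2$-cycles through a vertex, so $\operatorname{tr}A^{2}=nt$ — essentially reconstructing Duval's argument \cite{A}. To obtain \eqref{2.1}, multiply (ii) on the right by $J$ and use $AJ=kJ$, $A^{2}J=k^{2}J$, $J^{2}=nJ$: this gives $k^{2}J=\bigl(t+\lambda k+\mu(n-1-k)\bigr)J$, and comparing the coefficient of $J$ yields $k^{2}=t+\lambda k+\mu(n-1-k)$, which rearranges to $k\bigl(k+(\mu-\lambda)\bigr)=t+(n-1)\mu$.

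For the part concerning $d$: since $A$ commutes with the symmetric matrix $J$, decompose $\mathbb{C}^{n}=\langle\mathbf{1}\rangle\oplus W$ with $W=\mathbf{1}^{\perp}$, which is $A$-invariant because $A\mathbf{1}=k\mathbf{1}=A^{\top}\mathbf{1}$. On $W$ the matrix $J$ acts as $0$, so (ii) restricts to $A^{2}=(t-\mu)I+(\lambda-\mu)A$; hence $A|_{W}$ is annihilated by $X^{2}-(\lambda-\mu)X-(t-\mu)$, and every eigenvalue of $A$ other than $k$ equals one of $\theta_{1,2}=\tfrac12\bigl((\lambda-\mu)\pm d\bigr)$ with $d^{2}=(\mu-\lambda)^{2}+4(t-\mu)$. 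Note $d$ is an algebraic integer (a root of the monic $X^{2}-\bigl((\mu-\lambda)^{2}+4(t-\mu)\bigr)\in\mathbb{Z}[X]$) and $d^{2}\ge 0$ once $\mu\le t$ is known. Writing $m_{1},m_{2}\ge 0$, $m_{1}+m_{2}=n-1$, for the multiplicities of $\theta_{1},\theta_{2}$ in the characteristic polynomial of $A$, the identity $\operatorname{tr}A=0$ becomes
\[
2k+(\lambda-\mu)(n-1)+(m_{1}-m_{2})\,d=0 .
\]
In the main case $m_{1}\neq m_{2}$ this presents $d$ as a rational number, hence — as an algebraic integer — as an integer; it is positive because $d=0$ would, using $\mu\le t$, force $\lambda=\mu=t$ and all eigenvalues but $k$ to vanish, contradicting $0=\operatorname{tr}A\neq k$. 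The same identity shows $d$ divides the integer $2k-(\lambda-\mu)(n-1)$ of the statement with quotient $\pm(m_{2}-m_{1})$, which has absolute value at most $m_{1}+m_{2}=n-1$ and the same parity as $m_{1}+m_{2}=n-1$ — this gives the divisibility, the bound $|\,\cdot\,|\le n-1$, and the congruence mod $2$. The borderline case $m_{1}=m_{2}$ (equivalently $2k=(\mu-\lambda)(n-1)$, forcing $n$ odd, with characteristic polynomial $(X-k)\bigl(X^{2}-(\lambda-\mu)X-(t-\mu)\bigr)^{(n-1)/2}$ whose quadratic factor could a priori be irreducible over $\mathbb{Q}$) is the one needing separate care: showing $(\mu-\lambda)^{2}+4(t-\mu)$ is nevertheless a perfect square — by feeding $2k=(\mu-\lambda)(n-1)$ and the inequalities below back into $\operatorname{tr}A^{2}=nt$ and \eqref{2.1} — is the step I expect to be the \textbf{main obstacle}, and it is precisely the point Duval treats carefully.

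For the inequalities $0\le\lambda<t<k$, $0<\mu\le t<k$ and $-2(k-t-1)\le\mu-\lambda\le 2(k-t)$, I would reproduce Duval's elementary double counting: $\lambda,\mu,t\ge 0$ are counts, $0<t<k$ is the genuineness hypothesis, and $\mu>0$ follows from \eqref{2.1} since $\mu=0$ forces $k(k-\lambda)=t$ with $0<t<k$, whence $0<k-\lambda<1$, impossible. For $\lambda<t$, $\mu\le t$ and the two-sided bound on $\mu-\lambda$ one fixes a vertex $x$, splits its $k$ out-neighbours into the $t$ lying on a $2$-cycle through $x$ and the remaining $k-t$, compares this with the $\lambda$ length-two paths from $x$ to an out-neighbour and the $\mu$ length-two paths from $x$ to a non-neighbour $\neq x$ (noting $x\to y\to y$ and $x\to x\to y$ are forbidden), and finally substitutes the resulting estimates into \eqref{2.1} to bracket $\mu-\lambda$. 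These counts are routine; the genuine difficulty of the proposition sits in the integrality of $d$ discussed above.
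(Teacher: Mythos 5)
The paper does not prove Proposition \ref{p-2.1} at all --- it is quoted from Duval \cite{A} --- so the only thing to measure your attempt against is Duval's original argument, which is exactly the skeleton you are reconstructing. The parts you actually carry out are correct: right-multiplying the defining relation by $J$ gives (\ref{2.1}); restricting to $\mathbf{1}^{\perp}$ gives the quadratic $x^{2}-(\lambda-\mu)x-(t-\mu)$ and hence $d^{2}=(\mu-\lambda)^{2}+4(t-\mu)$; and the trace identity $2k+(\lambda-\mu)(n-1)+(m_{1}-m_{2})d=0$ correctly delivers integrality of $d$, the divisibility, the bound and the parity condition when $m_{1}\neq m_{2}$. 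But note what that identity actually shows: $d$ divides $2k+(\lambda-\mu)(n-1)=2k-(\mu-\lambda)(n-1)$, whereas the proposition as printed asserts divisibility of $2k-(\lambda-\mu)(n-1)$. You silently identified the two expressions, and they are not equal. In fact your version is the correct one: for the genuine DSRG with parameters $(n,k,\mu,\lambda,t)=(6,2,1,0,1)$ one has $d=1$ and $2k-(\lambda-\mu)(n-1)=9>5=n-1$, so the printed bound would fail, while $2k-(\mu-\lambda)(n-1)=-1$ behaves exactly as claimed. So the statement carries a sign typo inherited from the literature; a careful proof should point this out rather than pass over it.

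Beyond that, two substantive pieces are announced but not proved. The crux of Duval's theorem is precisely the case you defer: when $m_{1}=m_{2}$ the trace identity says nothing about $d$, and one must show that a genuine DSRG ($0<t<k$) cannot have an irreducible quadratic factor, i.e.\ irrational eigenvalues $\rho,\sigma$. You name the right ingredients ($2k=(\mu-\lambda)(n-1)$, $\operatorname{tr}A^{2}=nt$, equation (\ref{2.1}) and the parameter inequalities) but do not run the computation, and you yourself label it the main obstacle --- so as written this is a gap, not a proof. Likewise the inequalities $\lambda<t$, $\mu\leqslant t$ and $-2(k-t-1)\leqslant\mu-\lambda\leqslant2(k-t)$ are waved off as routine double counting without the actual counts being exhibited; only $\mu>0$ is genuinely derived (and that little argument from (\ref{2.1}) is correct and tidy). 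These estimates are not optional garnish: your own integrality argument already invokes $\mu\leqslant t$ to know $d^{2}\geqslant0$, so they must be established before, not after, the eigenvalue analysis.
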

\begin{rmk}If $G$ is a DSRG with parameters $( n,k,\mu ,\lambda ,t)$ with $0<t=\mu<k$,\;then from the last inequality in the above proposition ,\;we have $\lambda-\mu<0$.\;
\end{rmk}
\begin{proposition}(see \cite{A})\label{p-2.2}
A DSRG with parameters $( n, k, \mu ,\lambda , t)$ has three distinct integer eigenvalues
$$ k>\rho =\frac{1}{2}\left( -\left( \mu -\lambda  \right)+d \right)>\sigma =\frac{1}{2}\left( -\left( \mu -\lambda  \right)-d \right),\; $$
The multiplicities are
$$  1,\;m_\rho=-\frac{k+\sigma \left( n-1 \right)}{\rho -\sigma }\text{\;and\;}m_\sigma=\frac{k+\rho \left( n-1 \right)}{\rho -\sigma },\;$$
respectively.\;
\end{proposition}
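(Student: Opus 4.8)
The plan is to work directly from the two defining matrix conditions, since the adjacency matrix $A$ of a digraph need not be symmetric and so the spectral theorem is unavailable. First I would rewrite condition (ii) in the equivalent polynomial form
\[
A^2-(\lambda-\mu)A-(t-\mu)I=\mu J,
\]
and record that condition (i), $JA=AJ=kJ$, yields $A\mathbf{1}=k\mathbf{1}$, so $k$ is an eigenvalue with the all-ones eigenvector $\mathbf{1}$.

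Next I would pass to the hyperplane $W=\{v\in\mathbb{R}^n:\mathbf{1}^{\top}v=0\}$, equivalently $W=\{v:Jv=0\}$. From $JA=kJ$ one checks that $A$ preserves $W$: if $Jv=0$ then $J(Av)=kJv=0$. Restricting the displayed identity to $W$, where $Jv=0$, gives $\bigl(A^2-(\lambda-\mu)A-(t-\mu)I\bigr)v=0$, so $A|_W$ annihilates the quadratic $p(x)=x^2-(\lambda-\mu)x-(t-\mu)$. Its two roots are exactly $\rho=\tfrac12(-(\mu-\lambda)+d)$ and $\sigma=\tfrac12(-(\mu-\lambda)-d)$ with $d^2=(\mu-\lambda)^2+4(t-\mu)$, matching the statement. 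Since Proposition \ref{p-2.1} gives that $d$ is a positive integer, the roots are distinct, and reducing $d^2=(\mu-\lambda)^2+4(t-\mu)$ modulo $2$ shows $d\equiv\mu-\lambda\pmod 2$, so $-(\mu-\lambda)\pm d$ is even and $\rho,\sigma\in\mathbb{Z}$. Because $p$ has distinct roots, $A|_W$ is diagonalizable with every eigenvalue lying in $\{\rho,\sigma\}$; together with the eigenvalue $k$ on $\mathrm{span}(\mathbf{1})$ (and $\mathbf{1}\notin W$), this shows $A$ is diagonalizable over $\mathbb{R}$ with eigenvalues among $\{k,\rho,\sigma\}$.

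To pin down the multiplicities, let $m_\rho,m_\sigma$ denote the dimensions of the $\rho$- and $\sigma$-eigenspaces inside $W$, so $m_\rho+m_\sigma=\dim W=n-1$. Since the digraph has no loops, $\operatorname{tr}(A)=0$, which supplies the second equation $k+\rho m_\rho+\sigma m_\sigma=0$. Solving this $2\times 2$ linear system (using $\rho-\sigma=d\neq 0$) yields
\[
m_\rho=-\frac{k+\sigma(n-1)}{\rho-\sigma},\qquad m_\sigma=\frac{k+\rho(n-1)}{\rho-\sigma},
\]
exactly as claimed. The ordering $k>\rho>\sigma$ follows from $\rho>\sigma$ (as $d>0$) together with the Perron--Frobenius principle applied to the nonnegative regular matrix $A$, whose largest eigenvalue is its common row sum $k$; for a genuine DSRG the inequalities of Proposition \ref{p-2.1} force $m_\rho,m_\sigma>0$, so all three eigenvalues genuinely occur and are distinct.

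The step I expect to be the main obstacle is the clean handling of non-normality: because $A$ is not symmetric, I cannot diagonalize via an orthonormal basis, so the argument must route through the minimal-polynomial/invariant-subspace reasoning on $W$ to guarantee both that $\rho$ and $\sigma$ are the only eigenvalues there and that $A$ is genuinely diagonalizable (rather than merely having these as eigenvalues). The integrality assertion is the other delicate point, but it reduces to the parity observation $d\equiv\mu-\lambda\pmod 2$ once Proposition \ref{p-2.1} is invoked.
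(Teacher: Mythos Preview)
Your argument is correct and is the standard route to this result. Note, however, that the paper does not actually supply a proof of Proposition~\ref{p-2.2}: it is quoted from Duval's original paper with the tag ``(see \cite{A})'', so there is no in-paper proof to compare against. What you have written is essentially Duval's argument: pass to the $J$-kernel $W$, observe that $A|_W$ is annihilated by the separable quadratic $x^2-(\lambda-\mu)x-(t-\mu)$, read off $\rho,\sigma$, and recover the multiplicities from $\dim W=n-1$ and $\operatorname{tr}A=0$.

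Two small remarks. First, your appeal to Perron--Frobenius for $k>\rho$ is fine, but it is even quicker to note that plugging $v=\mathbf{1}$ into the defining relation gives $k^2-(\lambda-\mu)k-(t-\mu)=\mu n>0$, so $k$ is not a root of the quadratic and hence $k\neq\rho$; combined with $|\rho|\le k$ this gives $\rho<k$. Second, your claim that ``the inequalities of Proposition~\ref{p-2.1} force $m_\rho,m_\sigma>0$'' is a bit circular: in Duval's development those inequalities are \emph{derived} from the requirement that the multiplicity formulas yield positive integers, not the other way around. A cleaner justification that both multiplicities are positive for a genuine DSRG is to observe that $m_\rho=0$ would force $A|_W=\sigma I$, hence $A=\sigma I+\frac{k-\sigma}{n}J$, which is symmetric and therefore an undirected SRG ($t=k$), contradicting genuineness; similarly for $m_\sigma=0$.
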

\begin{proposition}(see \cite{A})\label{p-2.3}
If $G$ is a DSRG with parameters $( n, k, \mu ,\lambda , t)$,\;then the complementary $G'$ is also a DSRG with parameters $(n',k',\mu',\;\lambda' ,t')$,\;where $k'=(n-2k)+(k-1)$,\;$\lambda'=(n-2k)+(\mu-2)$,\;$t'=(n-2k)+(t-1)$,\;$\mu'=(n-2k)+\lambda$.\;
\end{proposition}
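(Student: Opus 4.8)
The plan is to argue entirely at the level of adjacency matrices, using the algebraic characterization recalled in the introduction: a digraph on $n$ vertices with adjacency matrix $B$ is a DSRG with parameters $(n,k,\mu,\lambda,t)$ if and only if (i) $JB=BJ=kJ$ and (ii) $B^2=tI+\lambda B+\mu(J-I-B)$. If $A=\mathbf{A}(G)$, then the complement $G'$ has adjacency matrix $A'=J-I-A$, so it suffices to verify (i) and (ii) for $A'$ and to read off the parameters from the resulting identities.

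First I would dispatch condition (i). Since $J^2=nJ$ and $AJ=JA=kJ$, one computes $JA'=A'J=(n-1-k)J$ directly, so $G'$ is regular with valency $k'=n-1-k=(n-2k)+(k-1)$, as claimed. Observe also that $A$ commutes with $J$, hence so does $A'$, and all matrices occurring below commute; this removes any ordering concerns from the computation.

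The heart of the proof is to expand $(A')^2=(J-I-A)^2$ and simplify it using the two identities satisfied by $A$. Multiplying out and substituting $J^2=nJ$, $AJ=JA=kJ$, and $A^2=tI+\lambda A+\mu(J-I-A)$ collapses everything into the $\{I,J,A\}$ basis:
\[
(A')^2=(1+t-\mu)\,I+(n-2-2k+\mu)\,J+(2+\lambda-\mu)\,A.
\]
To match this against condition (ii) for $A'$, I would rewrite the target in the same three matrices. Using the double-complement identity $J-I-A'=A$, the expression $t'I+\lambda'A'+\mu'(J-I-A')$ becomes $t'I+\lambda'(J-I-A)+\mu'A=(t'-\lambda')I+\lambda'J+(\mu'-\lambda')A$. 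Equating the coefficients of $I$, $J$, and $A$ produces a small linear system whose solution is $\lambda'=(n-2k)+(\mu-2)$, $t'=(n-2k)+(t-1)$, and $\mu'=(n-2k)+\lambda$. Exhibiting these values makes identity (ii) hold for $A'$, which together with (i) certifies that $G'$ is a DSRG with exactly the stated parameters.

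There is no genuine conceptual difficulty here; the entire argument reduces to a single matrix identity, so the ``hard part'' is merely careful bookkeeping. The one place to stay alert is the final substitution, where one must pass from a combination of $I,J,A$ to a combination of $I,A',(J-I-A')$, since the parameters of the complement are defined relative to $A'$ rather than $A$; keeping the signs straight there is all that the computation demands.
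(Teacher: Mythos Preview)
Your argument is correct and is the standard adjacency-matrix computation. The paper does not actually supply a proof of this proposition; it merely cites Duval's original article, so there is nothing to compare against. One small remark: when you ``equate coefficients'' of $I$, $J$, $A$, you are implicitly using that these three matrices are linearly independent, which holds for a genuine DSRG since $A$ has three distinct eigenvalues; alternatively, it suffices simply to plug the claimed values of $t',\lambda',\mu'$ back in and verify the identity, which your computation already does.
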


\begin{dfn}\label{d-2}(Group Ring)\\
For any finite (multiplicative) group $G$ and ring $R$,\;the \emph{Group Ring} $R[G]$ is defined as the set of all formal sums of elements of $G$,\;with coefficients from $R$.\;i.e.,
\[R[G]=\left\{\sum_{g\in G}r_gg|r_g\in R,\; r_g\neq 0\text{ for finite g}\right\}.\;\]
The operations $+$ and $\cdot$ on $R[G]$ are given
by
\[\sum_{g\in G}r_gg+\sum_{g\in G}s_gg=\sum_{g\in G}(r_g+s_g)g,\;\]
\[\left(\sum_{g\in G}r_gg\right)\cdot\left(\sum_{g\in G}s_gg\right)=\left(\sum_{g\in G}t_gg\right),\;t_g=\sum_{g'g''=g}r_{g'}s_{g''}.\]
\end{dfn}

For any multisubset $X$ of $G$,\;Let $\overline{X}$ denote the element of the group ring $R[G]$ that is the sum of all elements of $X$,\;i.e., \[\overline{X}=\sum_{x\in X}\Delta_X(x)x.\]

The lemma below allows us to express a sufficient and necessary condition for a Cayley graph to be  strongly regular in terms of group ring.
\begin{lem}\label{l-2.1}
The Cayley graph $\mathbf{Cay}(G, S)$ of $G$ with respect to $S$ is a DSRG with parameters $(n, k, \mu,  \lambda, t)$ if and only if $|G| = n$, $|S|= k$, and
\[\overline{S}^2=te+\lambda\overline{S}+\mu(\overline{G}-e-\overline{S}).\]
\end{lem}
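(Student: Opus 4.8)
The plan is to realise the adjacency matrix of a Cayley graph as the image of the connection set under the (right) regular representation of the group ring, thereby converting the matrix characterisation of a DSRG into the claimed group-ring identity. Write $n=|G|$ and index the rows and columns of $n\times n$ matrices over $R$ by the elements of $G$. For each $g\in G$ let $R_g$ be the permutation matrix with $(R_g)_{x,y}=1$ if $y=xg$ and $0$ otherwise, and extend linearly to a map $\Phi:R[G]\to M_n(R)$ by $\Phi\big(\sum_g r_g g\big)=\sum_g r_g R_g$. A direct computation gives $R_g R_{g'}=R_{gg'}$, so $\Phi$ is a ring homomorphism; and since the matrices $\{R_g\}_{g\in G}$ have pairwise disjoint supports they are linearly independent, whence $\Phi$ is injective (the regular representation is faithful). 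In particular, a group-ring equation holds if and only if its image under $\Phi$ holds.

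Next I would record the three images that drive the translation. Plainly $\Phi(e)=R_e=I$, and $\Phi(\overline{G})=\sum_{g}R_g$ has a $1$ in every position, so $\Phi(\overline{G})=J$. For the connection set, $\Phi(\overline{S})_{x,y}=\#\{s\in S:y=xs\}$, which equals $1$ when $x^{-1}y\in S$ and $0$ otherwise; this is exactly the adjacency matrix $A$ of $\mathbf{Cay}(G,S)$ under the convention $x\to y\iff x^{-1}y\in S$. I note that choosing the right (rather than left) regular representation here is precisely what matches the edge convention, and this is the only point requiring genuine care.

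With these identifications in hand the verification is routine. First observe that for a Cayley graph condition (i) of the DSRG definition is automatic once $|S|=k$: every vertex $x$ has exactly $|xS|=|S|=k$ out-neighbours and exactly $|xS^{-1}|=|S|=k$ in-neighbours, so $AJ=JA=kJ$. Hence the only substantive requirement is the quadratic matrix identity (ii), namely $A^2=tI+\lambda A+\mu(J-I-A)$. Applying $\Phi$ to the group-ring identity $\overline{S}^2=te+\lambda\overline{S}+\mu(\overline{G}-e-\overline{S})$ and using $\Phi(\overline{S}^2)=A^2$, $\Phi(e)=I$, $\Phi(\overline{G})=J$ produces precisely (ii); conversely, injectivity of $\Phi$ shows that (ii) forces the group-ring identity. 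Combining this with the automatic regularity and the trivial equivalences $|G|=n$, $|S|=k$ settles both directions.

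The argument carries no real obstacle beyond bookkeeping: the only place demanding attention is matching the handedness of the regular representation to the Cayley-graph edge convention so that $\Phi(\overline{S})=A$ holds exactly. Once that is fixed, faithfulness of the regular representation does all the work, turning the matrix characterisation of a DSRG into the stated group-ring identity and back.
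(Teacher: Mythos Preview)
Your argument is correct: realising the adjacency matrix as the image of $\overline{S}$ under the right regular representation, noting that $\Phi$ is a faithful ring homomorphism with $\Phi(e)=I$, $\Phi(\overline{G})=J$, and $\Phi(\overline{S})=A$, and then observing that the regularity condition $AJ=JA=kJ$ is automatic for a Cayley graph once $|S|=k$, cleanly reduces the DSRG matrix characterisation to the stated group-ring identity. The paper itself states this lemma without proof (treating it as a standard fact), so there is no in-paper argument to compare against; your write-up supplies exactly the routine verification one would expect, with the handedness of the regular representation correctly matched to the edge convention $x\to y\iff x^{-1}y\in S$.
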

A \emph{character} $\chi$ of a finite abelian group is a homomorphism from $G$ to $\mathbb{C}^*$,\;the multiplicative group of $\mathbb{C}$.\;All characters of $G$ form a group under the multiplication $\chi\chi'(a)=\chi(a)\chi'(a)$ for any $a\in G$,\;which is denoted by $\widehat{G}$ and it is called the \emph{character group} of ${G}$.\;It is easy to see that $\widehat{G}$ is isomorphic to $G$.\;Every character $\chi\in\widehat{G}$ of $G$ can be extended to a homomorphism from $\mathbb{C}[G]$ to $\mathbb{C}$  by\;
\[\chi(a)=\sum_{g\in G}a_g\chi(g),\text{\;for\;} a=\sum_{g\in G}a_gg\in\mathbb{C}[G].\]

Let $\omega$ be a fixed complex primitive $n$-th root of unity.\;Then
$\widehat{C_n}=\{\chi_j|j\in \mathbb{Z}_n\}$,\;where $\chi_j(x^i)=\omega^{ij}$ for $0\leqslant i,j\leqslant n-1.$
\subsection{Fourier Transformation}
Throughout this section $n$ will denote a fixed positive integer,\;$\mathbb{Z}_n$ is the modulo $n$ residue class ring.\;For
a positive divisor $r$ of $n$ let $r\mathbb{Z}_n$ be the subgroup of the additive group of $\mathbb{Z}_n$ of order $\frac{n}{r}$.\;

The following statement and  notations coincide with \cite{MI}.\;
Let $\mathbb{Z}_n^{\ast}$  be the multiplicative group
of units in the ring $\mathbb{Z}_n$.\;Then $\mathbb{Z}_n^{\ast}$ has an action on $\mathbb{Z}_n$ by multiplication and hence $\mathbb{Z}_n$ is a union of some  $\mathbb{Z}_n^{\ast}$-orbits.\;Each $\mathbb{Z}_n^{\ast}$-orbit
consists of all elemnents of a given order in the additive group $\mathbb{Z}_n$.\;If $r$ is a positive divisor of $n$,\;we denote  the
$\mathbb{Z}_n^{\ast}$-orbit containing all elements of order $r$ by $\mathcal{O}_r$.\;Thus
\[\mathcal{O}_r=\left\{z\bigg|z\in \mathbb{Z}_n,\;\frac{n}{(n,z)}=r\right\}=\left\{c{\frac{n}{r}}\bigg|1\leqslant c\leqslant r,(r,c)=1\right\}\]
and $|\mathcal{O}_r|=\varphi(r)$.

Let $\zeta_n$ be a fixed primitive $n$-th root of unity and $\mathbb{F}=\mathbb{Q}[\zeta_n]$ the $n$-th
\emph{Cyclotomic Field} over $\mathbb{Q}$.\;Further,\;let $\mathbb{F}^{\mathbb{Z}_n}$ be the $\mathbb{F}$-vector space of all
functions $f:\mathbb{Z}_n\rightarrow\mathbb{F}$ mapping from the residue class ring $\mathbb{Z}_n$ to the field $\mathbb{F}$ (with the
scalar multiplication and addition defined point-wise).\;The $\mathbb{F}$-algebra obtained from $\mathbb{F}^{\mathbb{Z}_n}$ by
defining the multiplication point-wise will be denoted by $(\mathbb{F}^{\mathbb{Z}_n},\cdot)$.\;The $\mathbb{F}$-algebra obtained from $\mathbb{F}^{\mathbb{Z}_n}$ by defining the multiplication as \emph{convolution} will be denoted by $(\mathbb{F}^{\mathbb{Z}_n},\ast)$,\;where the convolution is defined by:\;$(f\ast g)(z)=\sum_{i\in \mathbb{Z}_n}f(i)g(z-i)$.\;

We recall that $cA=\{ca|a\in A\}$,\;$i+A=\{i+a|a\in A\}$,\;$i-A=\{i-a|a\in A\}$,\;$A+B=\{a+b|a\in A,\;b\in B\}$.\;The \emph{Fourier transformation} which is an isomorphsim of $\mathbb{F}$-algebra $(\mathbb{F}^{\mathbb{Z}_n},\ast)$ and $(\mathbb{F}^{\mathbb{Z}_n},\cdot)$ is defined by
\begin{equation}\label{2.2}
\mathcal{F}:(\mathbb{F}^{\mathbb{Z}_n},\ast)\rightarrow (\mathbb{F}^{\mathbb{Z}_n},\cdot),\;\;\;\;(\mathcal{F}f)(z)=\sum_{i\in \mathbb{Z}_n}f(i)\zeta_n^{iz}.
\end{equation}
Then,\;$\mathcal{F}(f\ast g)=\mathcal{F}(f)\mathcal{F}(g)$ for $f,g\in \mathbb{F}^{\mathbb{Z}_n}$.\;It also obeys the \emph{inversion formula}
\begin{equation}\label{2.3}
\mathcal{F}(\mathcal{F}(f))(z)=nf(-z).
\end{equation}
Let $A$ and $B$ be multisubsets of $\mathbb{Z}_n$,\;we have
\begin{equation}\label{2.4}
\mathcal{F}\Delta_{(-A)}=\overline{\mathcal{F}\Delta_{A}},\;\mathcal{F}\Delta_{A+B}=\mathcal{F}(\Delta_A\ast\Delta_B)=(\mathcal{F}\Delta_{A})(\mathcal{F}\Delta_{B}).
\end{equation}
Recall that $\widehat{C_n}=\{\chi_j|\;j\in \mathbb{Z}_n\}$,\;we also have
\begin{equation}\label{2.5}
(\mathcal{F}\Delta_A)(z)=\chi_z(\overline{A}).\;
\end{equation}
The Fourier transform of characteristic functions of  additive subgroups in $\mathbb{Z}_n$  can be easily computed.\;For a positive divisor $r$ of $n$,\;
\begin{equation}\label{2.6}
\mathcal{F}\Delta_{r\mathbb{Z}_n}=\frac{n}{r}\Delta_{\frac{n}{r}\mathbb{Z}_n},\;\mathcal{F}\Delta_{\mathbb{Z}_n}=n\Delta_{0},\;\text{and\;}\mathcal{F}\Delta_{0}=\Delta_{\mathbb{Z}_n}=1.
\end{equation}
The following lemma will be used in this paper.
\begin{lem}(\cite{MI})\label{l-2.2}Suppose that $f:\mathbb{Z}_n\rightarrow\mathbb{F}$ is a function such that ${\rm{Im}}(f)\subseteq \mathbb{Q}$.\;Then ${\rm{Im}}(\mathcal{F}f)\subseteq \mathbb{Q}$  if and only if $f=\sum_{r|n}\alpha_r\Delta_{\mathcal{O}_r}$ for some $\alpha_r\in\mathbb{Q}$.
\end{lem}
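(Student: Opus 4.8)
The plan is to exploit the Galois action of $\mathrm{Gal}(\mathbb{F}/\mathbb{Q})\cong\mathbb{Z}_n^{\ast}$ on the values of $\mathcal{F}f$, together with the inversion formula \eqref{2.3}. For $t\in\mathbb{Z}_n^{\ast}$ write $\sigma_t$ for the automorphism of $\mathbb{F}=\mathbb{Q}[\zeta_n]$ determined by $\sigma_t(\zeta_n)=\zeta_n^{t}$. Since $f$ is $\mathbb{Q}$-valued, for every $z\in\mathbb{Z}_n$ one computes $\sigma_t\big((\mathcal{F}f)(z)\big)=\sum_{i}f(i)\zeta_n^{izt}=(\mathcal{F}f)(zt)$, the argument being read in $\mathbb{Z}_n$. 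As $\mathbb{F}/\mathbb{Q}$ is Galois, $(\mathcal{F}f)(z)\in\mathbb{Q}$ if and only if it is fixed by every $\sigma_t$; hence $\mathrm{Im}(\mathcal{F}f)\subseteq\mathbb{Q}$ if and only if $(\mathcal{F}f)(zt)=(\mathcal{F}f)(z)$ for all $z\in\mathbb{Z}_n$ and $t\in\mathbb{Z}_n^{\ast}$, i.e.\ $\mathcal{F}f$ is constant on each $\mathbb{Z}_n^{\ast}$-orbit. Because the reduction $\mathbb{Z}_n^{\ast}\to\mathbb{Z}_r^{\ast}$ is onto, the orbit of an order-$r$ element is all of $\mathcal{O}_r$, so "$\mathcal{F}f$ constant on orbits" is exactly "$\mathcal{F}f=\sum_{r\mid n}\beta_r\Delta_{\mathcal{O}_r}$ for some $\beta_r\in\mathbb{Q}$".

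The next step is to record the same fact for the functions $\Delta_{\mathcal{O}_r}$ themselves. Applying the displayed computation to $g=\Delta_{\mathcal{O}_r}$ (again $\mathbb{Q}$-valued) and using that multiplication by $t\in\mathbb{Z}_n^{\ast}$ permutes $\mathcal{O}_r$, one gets $\sigma_t\big((\mathcal{F}\Delta_{\mathcal{O}_r})(z)\big)=(\mathcal{F}\Delta_{\mathcal{O}_r})(zt)=(\mathcal{F}\Delta_{\mathcal{O}_r})(z)$; thus each $\mathcal{F}\Delta_{\mathcal{O}_r}$ is $\mathbb{Q}$-valued and constant on $\mathbb{Z}_n^{\ast}$-orbits. (Concretely $(\mathcal{F}\Delta_{\mathcal{O}_r})(z)$ is the Ramanujan sum $\sum_{(c,r)=1}\zeta_r^{cz}$ with $\zeta_r=\zeta_n^{n/r}$, but the Galois argument keeps this self-contained.)

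Now assemble the two directions. If $f=\sum_{r\mid n}\alpha_r\Delta_{\mathcal{O}_r}$ with $\alpha_r\in\mathbb{Q}$, then $\mathcal{F}f=\sum_r\alpha_r\,\mathcal{F}\Delta_{\mathcal{O}_r}$ is $\mathbb{Q}$-valued by the previous paragraph, which is one implication. Conversely, assume $\mathrm{Im}(\mathcal{F}f)\subseteq\mathbb{Q}$. By the first paragraph $\mathcal{F}f=\sum_r\beta_r\Delta_{\mathcal{O}_r}$ with $\beta_r\in\mathbb{Q}$; applying $\mathcal{F}$ once more and invoking the inversion formula \eqref{2.3},
\[ n\,f(-z)=\mathcal{F}(\mathcal{F}f)(z)=\sum_{r\mid n}\beta_r\,(\mathcal{F}\Delta_{\mathcal{O}_r})(z). \]
The right-hand side is $\mathbb{Q}$-valued and constant on $\mathbb{Z}_n^{\ast}$-orbits, hence so is $z\mapsto n\,f(-z)$, and therefore so is $f$ (the map $z\mapsto -z$ permutes the sets $\mathcal{O}_r$, and dividing by $n$ preserves rationality). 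Consequently $f=\sum_{r\mid n}\alpha_r\Delta_{\mathcal{O}_r}$ with $\alpha_r\in\mathbb{Q}$, as claimed.

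The only genuinely non-formal inputs are the behaviour of $\mathcal{F}\Delta_{\mathcal{O}_r}$ under the Galois action (equivalently, the rationality and orbit-invariance of Ramanujan sums) and the small fact that the $\mathbb{Z}_n^{\ast}$-orbit of an order-$r$ element is the whole of $\mathcal{O}_r$; everything else is bookkeeping with \eqref{2.2}--\eqref{2.6}. I expect the step most worth writing carefully to be the passage from $\mathcal{F}f$ back to $f$, i.e.\ checking that $\mathcal{F}$ carries the $\mathbb{Q}$-span of $\{\Delta_{\mathcal{O}_r}:r\mid n\}$ into itself, which is what makes the inversion argument close up.
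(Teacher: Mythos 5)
Your argument is correct: the Galois computation $\sigma_t\bigl((\mathcal{F}f)(z)\bigr)=(\mathcal{F}f)(tz)$ for $\mathbb{Q}$-valued $f$ gives exactly the equivalence "$\mathcal{F}f$ rational-valued $\Leftrightarrow$ $\mathcal{F}f$ constant on $\mathbb{Z}_n^{\ast}$-orbits", and the inversion formula then transports this back to $f$ cleanly. The paper does not prove this lemma at all (it is quoted from \cite{MI}), and the cited source's proof is the same Galois/Ramanujan-sum argument you give, so there is nothing to contrast.
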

The value of Fourier Transformation of the characteristic function of an orbit $\mathcal{O}_r$ is also known as the Ramanujan's sum,\;i.e.,\;
\begin{equation}\label{2.7}
(\mathcal{F}\Delta_{\mathcal{O}_r})(z)={\hat{\mu}}\left(\frac{r}{(r,z)}\right)\frac{\varphi(r)}{\varphi\left(\frac{r}{(r,z)}\right)}\in\mathbb{Z}.
\end{equation}
where ${\hat{\mu}}$ is M\"{o}bius function.\;
%In this paper,\;we usually just consider the additive group of $\mathbb{Z}_n$.\;

Let $r$ be a positive divisor of $n$,\;and $H$ is a multisubset of $\mathbb{Z}_v$,\;then $\psi_{n,v}^{-1}(H)$ is a multisubset of $\mathbb{Z}_n$.\;In other world,\;$\psi_{n,v}^{-1}(H)=S+v\mathbb{Z}_n$ for some multisubset $S$ of $\{0,1,\ldots,v-1\}$.\;
Let ${\Delta}^{(v)}_{H}$ be multiplicity function of $H$ in $\mathbb{Z}_v$,\;then for any $z\in \mathbb{Z}_v$,\;
\begin{equation}\label{identity}
\begin{aligned}
(\mathcal{F}{\Delta}^{(v)}_{H})(z)=\frac{n}{v}\left(\mathcal{F}{\Delta}_{\psi_{n,v}^{-1}(H)}(\phi_v(z))\right).\;
\end{aligned}
\end{equation}
%,\;where $\exp(x)=e^{2\pi ix}$.\;Thus we have following lemma.\;
%\begin{lem}\label{l-2.3}
%Let $F(z)$  be complex variables function,\;then
%\begin{equation}\label{2.10}
%F((\mathcal{F}{\Delta}^{(v)}_{\psi_v(H)})(z))=0,\;\forall z\in \mathbb{Z}_v\Leftrightarrow F((\mathcal{F}{\Delta}_{H})(z))=0,\;\forall z\in \frac{n}{v}\mathbb{Z}_n.
%\end{equation}
%\end{lem}
%\begin{proof}This result follows from (\ref{2.9}) and the fact that $\phi_v$ is an isomorphism between $\mathbb{Z}_v$ and $\frac{n}{v}\mathbb{Z}_n$.
%\end{proof}

\section{Some lemmas}
For $z\in \mathbb{Z}_{p^\alpha}$,\;define $\nu_p(z)$ as the maximum  power of the prime $p$ that divides $z$.\;Note that all the  $\mathbb{Z}_{p^\alpha}^\ast$-orbits are $\mathcal{O}_{1}=\mathcal{O}_{p^0},\mathcal{O}_{p},\mathcal{O}_{p^2},\ldots$ and $\mathcal{O}_{p^\alpha}$,\;where
\[\mathcal{O}_{p^z}=\left\{cp^{\alpha-z}|1\leqslant c\leqslant p^z,(p,c)=1\right\}=\{i|i\in\mathbb{Z}_{p^\alpha},\nu_p(i)=\alpha-z\}.\]
In particular,\;$\mathcal{O}_{1}=\{0\}=\{i|i\in\mathbb{Z}_{p^\alpha},\nu_p(i)=\alpha\}$.\;We denote $\mathcal{O}_{p^i}$ with $O_{i}$ for simplicity throughout this paper.\;Note that $p^{\alpha-\beta}\mathbb{Z}_{p^\alpha}$ is a subgroup of $\mathbb{Z}_{p^\alpha}$ and
\begin{equation}\label{3.1}
p^{\alpha-\beta}\mathbb{Z}_{p^\alpha}=\bigcup_{i=0}^\beta{O}_{i}.
\end{equation}
for each $0\leqslant \beta\leqslant\alpha$.\;Let $\widehat{C_{p^\alpha}}=\{\chi_z|z\in\mathbb{Z}_{p^\alpha}\}$,\;then the order of the character $\chi_z$ in the character group $\widehat{C_{p^\alpha}}$  is $p^{\alpha-\nu_p(z)}$.\;

In this section,\;let $X$ be a subset of $\mathbb{Z}_{p^\alpha}$,\;let $U_X=X\uplus(-X)$ and 
\[\mathbf{q}(z)\overset{\text{def}}=(\mathcal{F}\Delta_{U_X})(z),\]
for $z\in \mathbb{Z}_{p^\alpha}$.\;For any $\delta\in \mathbb{C}$,\;define $\Gamma_{\delta}=\{z\in \mathbb{Z}_{p^\alpha}|\;\mathbf{q}(z)=(\mathcal{F}\Delta_{U_X})(z)=\delta\}$.\;We also define one condition of $S$ by
\begin{equation}\label{assumption}
0\not\in X,\;X\neq -X,\;\text{and\;}(\mathcal{F}\Delta_{X\uplus(-X)})(z)=(\mathcal{F}\Delta_{X})(z)+\overline{(\mathcal{F}\Delta_{X})(z)}\in\{0,-m\},\;\forall \;0\neq z\in \mathbb{Z}_{p^\alpha},
\end{equation}
where $m$ is a positive integer.\;Therefore,\;if $S$ satisfies the condition $(\ref{assumption})$,\;we let $\Gamma=\Gamma_{-m}$,\;and we can obtain $\mathbf{q}(z)\leqslant0$ for any $z\neq 0$.\;Thus
\begin{equation}\label{cor1}
\mathbf{q}(z)>0 \text{\;implies that\;} z=0.
\end{equation}
If for some $z_1$ and $z_2$,\;we have
\begin{equation}\label{cor2}
\mathbf{q}(z_1),\mathbf{q}(z_2)<0,\;\text{then\;}\mathbf{q}(z_1)=\mathbf{q}(z_2)=-m.
\end{equation}

\begin{lem}\label{l-3.1}Let $X$ be a subset of $\mathbb{Z}_{p^\alpha}$ which satisfies $\ref{assumption}$.\;Then there are some integers $1\leqslant r_1<r_2<\ldots<r_s\leqslant \alpha$ and $1\leqslant r_{s+1}<r_{s+2}<\ldots<r_{t}\leqslant \alpha$ such that
\begin{equation}\label{3.4}
U_X=(2\otimes {O}_{r_1})\uplus (2\otimes {O}_{r_2})\uplus\ldots\uplus (2\otimes {O}_{r_s})\uplus ({O}_{r_{s+1}}\cup {O}_{r_{s+2}}\cup\ldots\cup {O}_{r_t}),
\end{equation}
\end{lem}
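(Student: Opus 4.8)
The plan is to deduce the orbit structure of $U_X$ from the observation that both $\Delta_{U_X}$ and its Fourier transform $\mathbf{q}$ are rational-valued, and then to invoke Lemma \ref{l-2.2}.

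First I would record the elementary features of $U_X=X\uplus(-X)$. Since $X$ is an ordinary set, $\Delta_{U_X}(z)=\Delta_X(z)+\Delta_X(-z)\in\{0,1,2\}$ for every $z\in\mathbb{Z}_{p^\alpha}$, so in particular $\mathrm{Im}(\Delta_{U_X})\subseteq\mathbb{Q}$. Next I would check that $\mathbf{q}=\mathcal{F}\Delta_{U_X}$ is rational-valued as well: at $z=0$ we have $\mathbf{q}(0)=|U_X|=2|X|\in\mathbb{Z}$ (this case is not covered by (\ref{assumption}) and must be treated separately), while for $z\neq 0$ the hypothesis (\ref{assumption}) gives $\mathbf{q}(z)\in\{0,-m\}$; hence $\mathrm{Im}(\mathbf{q})\subseteq\{2|X|,0,-m\}\subseteq\mathbb{Q}$.

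With both rationality statements in hand, Lemma \ref{l-2.2} applied to $f=\Delta_{U_X}$ gives $\Delta_{U_X}=\sum_{r\mid p^\alpha}\alpha_r\Delta_{\mathcal{O}_r}$ with $\alpha_r\in\mathbb{Q}$; since the divisors of $p^\alpha$ are $1=p^0,p,\dots,p^\alpha$, in the notation $O_i=\mathcal{O}_{p^i}$ this says that $\Delta_{U_X}$ is constant, equal to $\alpha_{p^i}$, on each orbit $O_i$. Because every $O_i$ is nonempty and $\Delta_{U_X}$ takes only the values $0,1,2$, each $\alpha_{p^i}$ lies in $\{0,1,2\}$, and $0\notin X$ forces $\alpha_{p^0}=\Delta_{U_X}(0)=0$, so $O_0=\{0\}$ does not occur. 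Listing the indices $i\in\{1,\dots,\alpha\}$ with $\alpha_{p^i}=2$ in increasing order as $r_1<\cdots<r_s$ and those with $\alpha_{p^i}=1$ as $r_{s+1}<\cdots<r_t$ then yields precisely (\ref{3.4}); one may further note $t>s$, since otherwise every orbit occurring in $U_X$ would have coefficient $2$, forcing $X=-X$ and contradicting (\ref{assumption}). I do not anticipate a genuine obstacle here: the entire argument is the passage from the spectral hypothesis (\ref{assumption}) to the rationality of $\mathbf{q}$, after which Lemma \ref{l-2.2} does the work; the only points requiring care are the separate evaluation of $\mathbf{q}(0)$ and the observation that $\Delta_{U_X}$, being the multiplicity function of a set together with its negative, pins the coefficients $\alpha_r$ down to $\{0,1,2\}$.
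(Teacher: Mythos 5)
Your proposal is correct and follows essentially the same route as the paper: establish that both $\Delta_{U_X}$ and $\mathbf{q}=\mathcal{F}\Delta_{U_X}$ are rational-valued (the latter from condition (\ref{assumption}) for $z\neq 0$ and from $\mathbf{q}(0)=2|X|$ at the origin), invoke Lemma \ref{l-2.2} to write $\Delta_{U_X}$ as a rational combination of orbit indicators, and pin the coefficients to $\{0,1,2\}$ with $\alpha_0=0$ from $0\notin X$. If anything, your write-up is more careful than the paper's, which compresses these checks into one line and miscites the key lemma as Lemma \ref{l-2.1}.
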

\begin{proof}Note that $$\mathrm{Im}(\mathbf{q})\in \mathbb{Q}.$$\;Therefore,\;we have
\[\Delta_{U_X}=\sum_{r|p^\alpha}\alpha_r\Delta_{\mathcal{O}_r}=\sum_{r=0}^\alpha\alpha_r\Delta_{{O}_r}\]
for some $\alpha_r\in\{0,1,2\}$ by Lemma \ref{l-2.1}.\;Note that $0\not\in X$,\;so $\alpha_0=0$.\;This shows that there are some intergers $1\leqslant r_1<r_2<\ldots<r_s\leqslant \alpha$ and $1\leqslant r_{s+1}<r_{s+2}<\ldots<r_{t}\leqslant \alpha$ such that \ref{3.4} holds.
\end{proof}

Throughout this section,\;we let $\mathcal{I}_1=\{r_1,r_2,\ldots,r_s\}$ and $\mathcal{I}_2=\{r_{s+1},r_{s+2},\ldots,r_{t}\}$,\;then $\mathcal{I}_2\neq \emptyset$ from $X\neq -X$.\;Thus $|\mathcal{I}_1|=s$ and $|\mathcal{I}_2|=t-s\geqslant1$.\;

\begin{lem}\label{l-3.2}Let $\mathcal{I}_1$ and $\mathcal{I}_2$ be sets defined above.\;Then $\mathcal{I}_1$ and $\mathcal{I}_2$ form a partition of $\{\beta+1,\beta+2,\ldots,\alpha\}$,\;where $\beta=\min\{v_p(z)|\;z\in \Gamma\}$.\;Moreover,\;we have
\begin{equation}\label{3.5}
\begin{aligned}
U_X=(O_{r_1}\cup O_{r_2}\cup\ldots\cup O_{r_s})\uplus\left(\mathbb{Z}_{p^\alpha}\setminus p^{\alpha-\beta}\mathbb{Z}_{p^\alpha}\right).
\end{aligned}
\end{equation}
\end{lem}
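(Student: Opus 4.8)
The goal is to pin down the "support structure" of $U_X = X \uplus (-X)$ once we know, from Lemma~\ref{l-3.1}, that
$U_X$ is a $\{0,1,2\}$-combination of orbits $O_1,\dots,O_\alpha$ (with $\alpha_0 = 0$), and from the running assumption~\ref{assumption} that $\mathbf{q}(z) = (\mathcal F\Delta_{U_X})(z)$ takes only the values $\{0,-m\}$ off $0$. The strategy is to compute $\mathbf{q}(z)$ explicitly in terms of the coefficients $\alpha_r$ using the Ramanujan-sum formula~\ref{2.7}, and then to read off which orbits may have coefficient $1$ versus $2$ by comparing values of $\mathbf{q}$ on $\Gamma$ and on its complement.

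\textbf{Step 1: evaluate $\mathbf{q}$ on each "level set" of $\nu_p$.} For a fixed $z$ with $\nu_p(z) = j$ (so $0 \leqslant j \leqslant \alpha$, with $j = \alpha$ meaning $z = 0$), formula~\ref{2.7} gives $(\mathcal F\Delta_{O_i})(z) = (\mathcal F\Delta_{\mathcal O_{p^i}})(z)$, which equals $\varphi(p^i) = p^{i-1}(p-1)$ when $i \leqslant \alpha - j$ (i.e.\ $p^{\alpha-i} \mid z$ fails to be an obstruction... more precisely when $\alpha - i \leqslant j$), equals $-p^{i-1}$ when $\alpha - i = j + 1$, and equals $0$ when $\alpha - i \geqslant j + 2$. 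Substituting into $\mathbf{q}(z) = \sum_{i=1}^{\alpha} \alpha_i (\mathcal F\Delta_{O_i})(z)$, one obtains a telescoping-type expression: $\mathbf{q}(z)$ depends on $z$ only through $j = \nu_p(z)$, and as $j$ decreases by $1$ the value $\mathbf{q}$ changes by a controlled amount governed by $\alpha_{\alpha-j}$. In particular $\mathbf{q}$ is constant on $O_{p^{\alpha-j}}$, and the sign pattern $\{0,-m\}$ forces strong divisibility/sign constraints on consecutive $\alpha_i$'s.

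\textbf{Step 2: locate $\beta$ and force the partition.} Define $\beta = \min\{\nu_p(z) : z \in \Gamma\}$ as in the statement. From Step~1, $\mathbf{q}(z) = 0$ for all $z$ with $\nu_p(z) > \beta$ strictly outside... no: rather, $z \in \Gamma$ (i.e.\ $\mathbf{q}(z) = -m < 0$) happens exactly on the levels $\nu_p(z) \leqslant \beta$, while $\mathbf{q}(z) = 0$ on levels $\nu_p(z) > \beta$, because once $\mathbf{q}$ becomes negative at level $\beta$ it cannot return to $0$ at lower levels — the increments are all of one sign as long as the relevant $\alpha_i \neq 0$, and~\ref{cor2} forbids two distinct negative values. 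Tracking the jump in $\mathbf{q}$ between level $j$ and level $j-1$: this jump is nonzero precisely when $\alpha_{\alpha-j} \neq 0$, i.e.\ precisely when $\alpha - j \in \mathcal I_1 \cup \mathcal I_2$. But $\mathbf{q}$ must already be at its final value $-m$ for all $j \leqslant \beta$, and must be $0$ for $j > \beta$; the only way to be consistent is that \emph{every} index $\alpha - j$ with $j < \beta$, i.e.\ every index in $\{\beta+1,\dots,\alpha\}$, carries a nonzero coefficient (so $\mathcal I_1 \cup \mathcal I_2 = \{\beta+1,\dots,\alpha\}$), and no index $\leqslant \beta$ does (so $\alpha_i = 0$ for $i \leqslant \beta$). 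Since $\mathcal I_1, \mathcal I_2$ are by construction the indices with $\alpha_{r} = 2$ and $\alpha_r = 1$ respectively, they partition $\{\beta+1,\dots,\alpha\}$.

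\textbf{Step 3: assemble the identity~\ref{3.5}.} With the index sets identified, rewrite $\Delta_{U_X} = \sum_{r \in \mathcal I_1} 2\Delta_{O_r} + \sum_{r \in \mathcal I_2} \Delta_{O_r} = \sum_{r \in \mathcal I_1}\Delta_{O_r} + \sum_{r = \beta+1}^{\alpha} \Delta_{O_r}$. By~\ref{3.1}, $\bigcup_{r=0}^{\beta} O_r = p^{\alpha-\beta}\mathbb Z_{p^\alpha}$, so $\bigcup_{r=\beta+1}^{\alpha} O_r = \mathbb Z_{p^\alpha} \setminus p^{\alpha-\beta}\mathbb Z_{p^\alpha}$ (a genuine set, all multiplicities one), which yields
\[
U_X = (O_{r_1} \cup \cdots \cup O_{r_s}) \uplus \bigl(\mathbb Z_{p^\alpha} \setminus p^{\alpha-\beta}\mathbb Z_{p^\alpha}\bigr),
\]
as claimed.

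\textbf{Main obstacle.} The delicate point is Step~2: showing that $\mathbf{q}$ cannot "skip" a level — i.e.\ that there is no index $i \in \{\beta+1,\dots,\alpha\}$ with $\alpha_i = 0$. A priori the jumps in $\mathbf{q}$ could be arranged so that $\mathbf{q}$ stays at $0$ through several levels and only drops to $-m$ at level $\beta$. Ruling this out requires using both that $\mathbf{q}$ takes \emph{only} two values off $0$ (so partial sums of the increments can never produce a third value such as an intermediate negative number) and a monotonicity observation: each increment contributed when $\alpha_i = 1$ is $-p^{i-1}\cdot p$ ... (a negative multiple of a power of $p$), and when $\alpha_i = 2$ is twice that, so successive partial sums are strictly decreasing in absolute value terms and cannot "cancel back" to $0$. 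Combining this with~\ref{cor1} and~\ref{cor2} to pin the value to exactly $-m$ on the whole tail, and with $\mathcal I_2 \neq \emptyset$ (from $X \neq -X$) to ensure $\beta < \alpha$, closes the argument. I expect the bookkeeping of exactly which power of $p$ appears in each increment, and the induction downward on $j$, to be the technical heart; everything else is substitution into~\ref{2.7} and~\ref{3.1}.
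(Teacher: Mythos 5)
Your route (computing $\mathbf{q}$ level by level from the orbit decomposition via Ramanujan sums) is genuinely different from the paper's, which instead applies the inversion formula to $\mathcal{F}\Delta_{U_X}$ directly: since $\mathbf{q}$ equals $-m$ on $\Gamma$ and $0$ elsewhere off zero, one gets $p^\alpha\Delta_{U_X}(z)=m\sum_{i\in\Gamma}(1-\omega^{-iz})$, whose vanishing forces $\omega^{-iz}=1$ for all $i\in\Gamma$, so the zero set of $\Delta_{U_X}$ is exactly $p^{\alpha-\beta}\mathbb{Z}_{p^\alpha}$ with no case analysis on the coefficients $\alpha_r$ at all. Your forward approach could in principle work, but as written Step~2 contains concrete errors. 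First, the evaluation of $(\mathcal{F}\Delta_{O_i})(z)$ is mis-indexed: for $\nu_p(z)=j$ it equals $\varphi(p^i)$ iff $i\leqslant j$, equals $-p^{i-1}$ iff $i=j+1$, and vanishes iff $i\geqslant j+2$; your conditions phrased in terms of $\alpha-i$ are not correct. Second, writing $Q_j$ for the common value of $\mathbf{q}$ on the level $\nu_p(z)=j$, one has $Q_j=\sum_{i\leqslant j}\alpha_i\varphi(p^i)-\alpha_{j+1}p^j$ and hence $Q_j-Q_{j-1}=(\alpha_j-\alpha_{j+1})p^j$; the jump is governed by the \emph{difference} of consecutive coefficients, not by a single $\alpha_{\alpha-j}$, and it is not of one fixed sign (it is positive whenever $\alpha_j>\alpha_{j+1}$). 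The "monotonicity observation" on which your no-skipped-levels argument rests is therefore false. Third, your picture of $\Gamma$ is inverted: since $\beta=\min\{\nu_p(z):z\in\Gamma\}$, the levels $j<\beta$ are exactly where $\mathbf{q}=0$ and $\Gamma$ occupies levels $\geqslant\beta$, not the reverse.

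The gap is repairable inside your framework, but by a different mechanism than the one you propose: if $\alpha_i\geqslant 1$, $\alpha_{i+1}=0$ and $i<\alpha$, then $Q_i=\sum_{l\leqslant i}\alpha_l\varphi(p^l)\geqslant\varphi(p^i)>0$, contradicting $(\ref{cor1})$; hence $\{i:\alpha_i\geqslant 1\}$ is an interval $\{c,c+1,\ldots,\alpha\}$, and then $Q_{c-1}=-\alpha_cp^{c-1}<0$ together with $Q_j=0$ for $j<c-1$ identifies $c-1$ with $\beta$, after which your Step~3 assembles $(\ref{3.5})$ correctly. Since you explicitly defer exactly this point as the "technical heart" you expect to work, and the sign argument you sketch for it does not hold, the proof as submitted is incomplete.
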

\begin{proof}By the  inversion formula (\ref{2.3}) of Fourier transformation,\;we have
\begin{equation*}
\begin{aligned}
p^\alpha\Delta_U(z)&=(\mathcal{F}(\mathcal{F}\Delta_U))(-z)=-m\sum_{i\in \Gamma}\omega^{-iz}+|U|.\\
\end{aligned}
\end{equation*}
Note that $0\not\in U$ and hence $\Delta_U(0)=-m|\Gamma|+|U|=0$.\;Thus we can get that
\begin{equation*}
\begin{aligned}
z\in\mathbb{Z}_{p^\alpha}\setminus U &\Leftrightarrow\Delta_U(z)=0\Leftrightarrow  \Delta_U(z)-\Delta_U(0)=0\Leftrightarrow \left(\sum_{i\in \Gamma}\omega^{-iz}+|U|\right)-(|\Gamma|+|U|)=0\\
&\Leftrightarrow \sum_{i\in \Gamma}(1-\omega^{-iz})=0 \Leftrightarrow \omega^{-iz}=1,\forall i\in\Gamma\Leftrightarrow z\in \bigcap_{i\in\Gamma}\frac{p^\alpha}{(p^\alpha,i)}\mathbb{Z}_{p^\alpha}.
\end{aligned}
\end{equation*}
This gives that
\[\mathbb{Z}_{p^\alpha}\setminus U=\bigcap_{i\in\Gamma}\frac{p^\alpha}{(p^\alpha,i)}\mathbb{Z}_{p^\alpha}=p^{\alpha-\beta}\mathbb{Z}_{p^\alpha},\]
where $\beta$ is defined as above.\;This completes the proof.\;
\end{proof}

%Noting that the right side of the above equation is a subgroup of $\mathbb{Z}_{p^\alpha}$,\;hence thers is a $\beta$  such that $0\leqslant\beta\leqslant\alpha-1$ and $\mathbb{Z}_{p^\alpha}\setminus U=\bigcup_{i=0}^\beta O_i$.\;Therefore for any $z\in \bigcup_{i=\beta+1}^\alpha O_i$,\;we have $\Delta_U(z)\geqslant1$ and
%\[\Delta_U(z)=\left\{
%    \begin{array}{ll}
%      2, & \hbox{if $z\in \bigcup_{i=1}^s O_{r_i}$;} \\
%      1, & \hbox{if $z\in \bigcup_{i=s+1}^t O_{r_i}$.}
%    \end{array}
%  \right.
%\]
%This shows that $\mathcal{I}_1$ and $\mathcal{I}_2$ form a partition of $\{\beta+1,\beta+2,\ldots,\alpha\}$ and then
%\begin{equation*}
%\begin{aligned}
%X\uplus(-X)&=(2\otimes \mathcal{O}_{r_1})\uplus (2\otimes \mathcal{O}_{r_2})\uplus\ldots\uplus (2\otimes \mathcal{O}_{r_s})\uplus (\mathcal{O}_{r_{s+1}}\cup \mathcal{O}_{r_{s+2}}\cup\ldots\cup \mathcal{O}_{r_t})\\
%&=(O_{r_1}\cup O_{r_2}\cup\ldots\cup O_{r_s})\uplus\left(\bigcup_{i=\beta+1}^\alpha O_i\right).
%\end{aligned}
%\end{equation*}

\begin{rmk}From the above lemma,\;we have $t=\alpha-\beta$.\;
\end{rmk}
We now prove the following lemma which asserts that $\mathcal{I}_1=\emptyset$ if $p$ is a odd prime.\;
\begin{lem}\label{l-3.3}Let $X\subset \mathbb{Z}_{p^\alpha}$ which satisfies $(\ref{assumption})$  and $\mathcal{I}_1$ and $\mathcal{I}_2$ be sets defined above.\;If $p$ be a odd prime,\;then $\mathcal{I}_1=\emptyset$ and
\begin{equation}\label{3.6}
\begin{aligned}
U_X=\mathbb{Z}_{p^\alpha}\setminus p^{\alpha-\beta}\mathbb{Z}_{p^\alpha}.
\end{aligned}
\end{equation}
\end{lem}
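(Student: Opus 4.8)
The plan is to argue by contradiction: suppose $\mathcal{I}_1 \neq \emptyset$, so that some orbit $O_{r_j}$ (with $1 \leqslant j \leqslant s$) appears with multiplicity $2$ in $U_X = X \uplus (-X)$. The key structural fact from Lemma \ref{l-3.2} is that $\mathcal{I}_1$ and $\mathcal{I}_2$ partition $\{\beta+1,\ldots,\alpha\}$, and that $U_X$ is determined by \eqref{3.5}. I would then feed the expansion \eqref{3.5} into the Fourier transform and use the Ramanujan-sum formula \eqref{2.7} together with \eqref{2.6} to compute $\mathbf{q}(z) = (\mathcal{F}\Delta_{U_X})(z)$ explicitly as a function of $\nu_p(z)$, obtaining an expression of the form $\mathbf{q}(z) = 2\sum_{r \in \mathcal{I}_1}(\mathcal{F}\Delta_{O_r})(z) + (\mathcal{F}\Delta_{\mathbb{Z}_{p^\alpha} \setminus p^{\alpha-\beta}\mathbb{Z}_{p^\alpha}})(z)$. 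The Ramanujan sum $(\mathcal{F}\Delta_{O_{p^i}})(z)$ equals $\varphi(p^i)$ if $\nu_p(z) \geqslant \alpha - i + 1$ (so the character is trivial on the orbit... more precisely when $i \leqslant \nu_p(z) - \alpha + i$, i.e. when $p^i \mid$ the relevant index), equals $-p^{i-1}$ when $\nu_p(z) = \alpha - i$, and equals $0$ otherwise; and $\mathcal{F}\Delta_{\mathbb{Z}_{p^\alpha}\setminus p^{\alpha-\beta}\mathbb{Z}_{p^\alpha}} = \mathcal{F}\Delta_{\mathbb{Z}_{p^\alpha}} - \mathcal{F}\Delta_{p^{\alpha-\beta}\mathbb{Z}_{p^\alpha}} = p^\alpha\Delta_0 - p^{\beta}\Delta_{p^\beta\mathbb{Z}_{p^\alpha}}$ by \eqref{2.6}.

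Next I would exploit the constraint coming from \eqref{assumption}: for every $z \neq 0$ we must have $\mathbf{q}(z) \in \{0, -m\}$, and $m$ is a fixed positive integer. Take $r_1 = \min \mathcal{I}_1$, the smallest index in $\mathcal{I}_1$. Pick $z$ with $\nu_p(z) = \alpha - r_1$; then among the terms $2(\mathcal{F}\Delta_{O_r})(z)$ for $r \in \mathcal{I}_1$, the ones with $r < r_1$ do not occur (by minimality), the one with $r = r_1$ contributes $2 \cdot (-p^{r_1 - 1})$, and those with $r > r_1$ contribute $2\varphi(p^r) = 2(p^r - p^{r-1})$ (provided $\alpha - r_1 \geqslant \alpha - r + 1$, i.e. $r \geqslant r_1 + 1$, which holds). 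Since $\alpha - r_1 \geqslant \alpha - \beta$ would force $r_1 \leqslant \beta$, contradicting $r_1 \in \{\beta+1,\ldots,\alpha\}$; rather $\alpha - r_1 < \alpha - \beta$, so $z \notin p^{\alpha-\beta}\mathbb{Z}_{p^\alpha}$ but we need to check whether $z \in p^\beta\mathbb{Z}_{p^\alpha}$ — since $\nu_p(z) = \alpha - r_1 \geqslant \alpha - \alpha = 0$ and we compare $\alpha - r_1$ with $\beta$; here $\alpha - r_1 \leqslant \alpha - \beta - 1 < \alpha - \beta$, so typically $\nu_p(z) < \beta$ fails or holds depending on sizes, and the $p^\beta\Delta_{p^\beta\mathbb{Z}_{p^\alpha}}$ term vanishes. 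Collecting terms I would get a closed form for $\mathbf{q}(z)$ and compare it with the value at a neighbouring residue class, say $z'$ with $\nu_p(z') = \alpha - r_1 - 1$ or with $\nu_p(z') = \alpha - r$ for the next larger $r \in \mathcal{I}_1 \cup \mathcal{I}_2$, to derive that the "$-m$" value cannot be held consistently.

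The cleanest route, and the one I expect to carry the argument, is this: evaluate $\mathbf{q}$ at $z$ with $\nu_p(z) = \alpha - r_1$ and at $z'$ with $\nu_p(z') = \alpha - r_1 + 1$ (this $z'$ exists since $r_1 \geqslant 1$; if $r_1 = \beta + 1$ then $\nu_p(z') = \alpha - \beta$ means $z' \in p^{\alpha-\beta}\mathbb{Z}_{p^\alpha}$, and for such $z'$ we know $\Delta_{U_X}(z')$-related data). One computes that $\mathbf{q}(z) - \mathbf{q}(z')$ equals something of the form $2(-p^{r_1-1}) - 2\varphi(p^{r_1}) = -2p^{r_1-1} - 2(p^{r_1} - p^{r_1-1}) = -2p^{r_1}$ plus the contribution from the $\mathbb{Z}_{p^\alpha}\setminus p^{\alpha-\beta}\mathbb{Z}_{p^\alpha}$ part, which for $p$ odd cannot be cancelled — so $\mathbf{q}(z) \neq \mathbf{q}(z')$, and since both are negative (I would verify negativity from the explicit formula), \eqref{cor2} forces $\mathbf{q}(z) = \mathbf{q}(z') = -m$, a contradiction. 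This is where the hypothesis that $p$ is odd enters decisively: for $p = 2$ the term $-2p^{r_1-1} = -2^{r_1}$ collides with $-2\varphi(2^{r_1}) = -2^{r_1}$, so the obstruction disappears — which is exactly why the $p=2$ case (Theorem \ref{t-1.5}) is genuinely different. The main obstacle will be bookkeeping the several regimes of $\nu_p(z)$ relative to the thresholds $\alpha - r$ (for $r$ ranging over $\mathcal{I}_1 \cup \mathcal{I}_2$) and $\beta$, and making sure the chosen test residues $z, z'$ avoid the degenerate subgroup $p^{\alpha-\beta}\mathbb{Z}_{p^\alpha}$ where $\mathbf{q}$ is pinned; once the piecewise formula for $\mathbf{q}$ is in hand the contradiction is a short computation. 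Having ruled out $\mathcal{I}_1 \neq \emptyset$, equation \eqref{3.6} follows immediately from \eqref{3.5} with $\mathcal{I}_1 = \emptyset$.
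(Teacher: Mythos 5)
Your overall strategy is the same as the paper's: assume $\mathcal{I}_1\neq\emptyset$, expand $\mathbf{q}=\mathcal{F}\Delta_{U_X}$ via Ramanujan sums, evaluate at suitable $z$ of prescribed $p$-adic valuation, and contradict $(\ref{cor1})$ or $(\ref{cor2})$. However, the proposal as written has concrete errors that break the computation. First, your evaluation rule for the Ramanujan sum is dualized incorrectly: $(\mathcal{F}\Delta_{O_{p^r}})(z)$ equals $\varphi(p^r)$ when $\nu_p(z)\geqslant r$, equals $-p^{r-1}$ when $\nu_p(z)=r-1$, and vanishes when $\nu_p(z)\leqslant r-2$; the thresholds are $r$ and $r-1$, not $\alpha-r$. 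Consequently your chosen test points with $\nu_p(z)=\alpha-r_1$ and $\nu_p(z')=\alpha-r_1+1$ do not isolate the contribution of $O_{r_1}$ as claimed; the correct test points are $\nu_p(z)\in\{r_1-1,r_1\}$ (the paper evaluates at $z=p^{r_u}$, $p^{\gamma+s-1}$, $p^{\beta+1}$, etc.). Second, the expansion $\mathbf{q}=2\sum_{r\in\mathcal{I}_1}\mathcal{F}\Delta_{O_r}+\mathcal{F}\Delta_{\mathbb{Z}_{p^\alpha}\setminus p^{\alpha-\beta}\mathbb{Z}_{p^\alpha}}$ overcounts: since $\mathcal{I}_1\subseteq\{\beta+1,\ldots,\alpha\}$, the correct expression from $(\ref{3.5})$ is $\sum_{r\in\mathcal{I}_1}\mathcal{F}\Delta_{O_r}+p^\alpha\Delta_0-p^\beta\Delta_{p^\beta\mathbb{Z}_{p^\alpha}}$, each doubled orbit contributing one extra copy, not two.

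More importantly, even with the formulas corrected, a single two-point comparison near $\min\mathcal{I}_1$ does not close the argument. For instance, if $r_1+1\in\mathcal{I}_1$ then at $\nu_p(z)=r_1$ one gets $\mathbf{q}(z)=\varphi(p^{r_1})-p^{r_1}-p^{\beta}<0$, so neither $(\ref{cor1})$ nor, by itself, $(\ref{cor2})$ yields a contradiction there. The paper's proof is genuinely a multi-stage case analysis: it first shows (via $(\ref{cor1})$) that $\mathcal{I}_1$ must consist of consecutive integers, then that its top element must be $\alpha$, then (via $(\ref{cor2})$) that necessarily $\mathcal{I}_1=\{\beta+2,\ldots,\alpha\}$ and $\mathcal{I}_2=\{\beta+1\}$, and only then derives the final contradiction by comparing $\mathbf{q}(p^{\beta})=-p^{\beta}$ with $\mathbf{q}(p^{\beta+1})=-(1+p)p^{\beta}$. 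Your sketch defers all of this as ``bookkeeping,'' but it is the substance of the proof and cannot be replaced by one comparison. On the positive side, you correctly identify where oddness of $p$ enters: the inequalities such as $p^{r-1}(p-1)-p^{\beta}>0$ and the distinguishability of $\varphi(p^r)=p^{r-1}(p-1)$ from $p^{r-1}$ fail precisely when $p=2$, which is why Lemma \ref{l-3.4} takes a different form.
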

\begin{proof}This lemma holds for $\alpha=1$ clearly.\;We assume that $\alpha\geqslant 2$.\;Suppose that $\mathcal{I}_1\neq\emptyset$.\;Then $s=|\mathcal{I}_1|>0$.\;Since $X\neq(-X)$,\;then $\mathcal{I}_2\neq\emptyset$ and hence $\beta\leqslant\alpha-2$.\;It follows from Lemma \ref{l-3.2} and equations (\ref{2.6}),\;(\ref{2.7}) that
\begin{equation*}
\begin{aligned}
\mathbf{q}(z)&=\sum_{i=1}^s(\mathcal{F}\Delta_{O_{r_i}})(z)+\mathcal{F}\Delta_{\mathbb{Z}_{p^\alpha}}(z)-(\mathcal{F}\Delta_{p^{\alpha-\beta}\mathbb{Z}_{p^\alpha}})(z)\\
&=\sum_{i=1}^s\mu\left(\frac{p^{r_i}}{(p^{r_i},z)}\right)\frac{\varphi(p^{r_i})}{\varphi\left(\frac{p^{r_i}}{(p^{r_i},z)}\right)}
+p^\alpha\Delta_{0}(z)-p^\beta(\Delta_{p^{\beta}\mathbb{Z}_{p^\alpha}})(z).
\end{aligned}
\end{equation*}

We now divide into two cases.\;\\
{${\mathbf{Case\;1}}$}:\;$s\geq 2$.\;We assert that $r_{i+1}=r_{i}+1$ for $1\leqslant i\leqslant s-1$.\;Otherwise,\;there is an integer  $u$ with $1\leqslant u\leqslant s-1$ and  $r_{u+1}>r_{u}+1$.\;Thus
\begin{equation*}
\begin{aligned}
\mathbf{q}(p^{r_u})&=\sum_{i=1}^u\mu\left(\frac{p^{r_i}}{(p^{r_i},p^{r_u})}\right)\frac{\varphi(p^{r_i})}{\varphi\left(\frac{p^{r_i}}{(p^{r_i},p^{r_u})}\right)}
+p^\alpha\Delta_{0}(p^{r_u})-p^\beta(\Delta_{p^{\beta}\mathbb{Z}_{p^\alpha}})(p^{r_u})\\
&=\sum_{i=1}^up^{r_i-1}(p-1)-p^{\beta}>p^{r_1-1}(p-1)-p^{\beta}\geq p^{\beta}(p-1)-p^{\beta}>0,
\end{aligned}
\end{equation*}
contradicting  $(\ref{cor1})$.\;Thus $\mathcal{I}_1=\{r_1,r_2,\ldots,r_s\}=\{\gamma,\gamma+1,\ldots,\gamma+s-1\}$ for some $\gamma\geq\beta+1$.\;Note that
\begin{equation*}
\begin{aligned}
\mathbf{q}(p^{\gamma+s-1})&=\sum_{i=\gamma}^{\gamma+s-1}\mu\left(\frac{p^{i}}{(p^{i},p^{\gamma+s-1})}\right)\frac{\varphi(p^{i})}{\varphi\left(\frac{p^{i}}{(p^{i},p^{\gamma+s-1})}\right)}
-p^{\beta}\\
&=\sum_{i=\gamma}^{\gamma+s-1}\varphi(p^{i})-p^{\beta}=\sum_{i=\gamma}^{\gamma+s-1}p^{r_i-1}(p-1)-p^{\beta}\\
&=p^{\gamma+s-1}-p^{\gamma-1}-p^{\beta}=p^{\gamma-1}(p^s-1)-p^{\beta}\geq p^{\beta}(p^s-2)>0,
\end{aligned}
\end{equation*}
then $p^{\gamma+s-1}=0$ from $(\ref{cor1})$ and hence $\gamma+s-1=\alpha$,\;$\mathcal{I}_1=\{\alpha-s+1,\alpha-s+2,\ldots,\alpha\}$ and $\mathcal{I}_2=\{\beta+1,\beta+2,\ldots,\alpha-s\}$.\;Note that $\mathcal{I}_2\neq \emptyset$ and hence $\beta+1\in \mathcal{I}_2$,\;so $\alpha-s+1\geqslant\beta+2$.\;If $\alpha-s+1>\beta+2$,\;then
\begin{equation*}
\begin{aligned}
\mathbf{q}(p^{\beta+1})&=\mu\left(\frac{p^{\alpha-s+1}}{(p^{\alpha-s+1},p^{\beta+1})}\right)\frac{\varphi(p^{\alpha-s+1})}{\varphi\left(\frac{p^{\alpha-s+1}}{(p^{i},p^{\beta+1})}\right)}
-p^{\beta}=-p^{\beta},
\end{aligned}
\end{equation*}
but \begin{equation*}
\mathbf{q}(p^{\alpha-s+1})=-p^{\alpha-s}-p^{\beta}<-p^{\beta}=\mathbf{q}(p^{\beta+1}),
\end{equation*}
contradicting  $(\ref{cor2})$.\;This shows that $\alpha-s+1=\beta+2$,\;so $\mathcal{I}_1=\{\beta+2,\beta+3,\ldots,\alpha\}$ and $\mathcal{I}_2=\{\beta+1\}$.\;In this case,\;we can get $\mathbf{q}(p^{\beta})=-p^{\beta}$,\;but note that
\begin{equation*}
\mathbf{q}(p^{\beta+1})=\mu\left(\frac{p^{\beta+2}}{(p^{\beta+2},p^{\beta+1})}\right)\frac{\varphi(p^{\beta+2})}{\varphi\left(\frac{p^{\beta+2}}{(p^{\beta+2},p^{\beta+1})}\right)}-p^{\beta}=-(1+p)\cdot p^{\beta}\neq\mathbf{q}(p^{\beta}),
\end{equation*}
contradicting $(\ref{cor2})$.\;\\
{${\mathbf{Case\;2}}$}:\;$s=1$.\;Let $\mathcal{I}_1=\{\gamma\}$ with some $\alpha\geq\gamma\geq\beta+1$.\;Note that
\begin{equation*}
\mathbf{q}(p^{\gamma})=p^{\gamma-1}(p-1)-p^{\beta}>0,
\end{equation*}
then $q^\gamma=0$,\;then $\mathcal{I}_1=\{\alpha\}$.\;But
\[\mathbf{q}(p^{\alpha-2})=-p^{\beta}\neq \mathbf{q}(p^{\alpha-1})=-p^{\beta}-p^{\alpha-1},\]
which also leads to a contradiction to $(\ref{cor2})$.\;

Combining {${\mathbf{Case\;1}}$} ans {${\mathbf{2}}$},\;we can obtain that $\mathcal{I}_1=\emptyset$ and
\begin{equation*}
\begin{aligned}
U_X=\bigcup_{i=\beta+1}^\alpha O_i=\mathbb{Z}_{p^\alpha}\setminus p^{\alpha-\beta}\mathbb{Z}_{p^\alpha}.
\end{aligned}
\end{equation*}
from Lemma \ref{l-3.2}.
\end{proof}

We now consider $p=2$,\;and $\alpha\geqslant 2$,\;since we cannot find a subset $X$ of $\mathbb{Z}_2\setminus\{0\}=\{1\}$ such that $X\neq(-X)$.\;
\begin{lem}\label{l-3.4}Let $X\subset \mathbb{Z}_{2^\alpha}$ satisfies condition $(\ref{3.2})$  and $\mathcal{I}_1$ and $\mathcal{I}_2$ be sets defined above.\;If $\mathcal{I}_1\neq\emptyset$,\;then $\mathcal{I}_1=\{\beta+1\}$,\; $\mathcal{I}_2=\{\beta+1,\beta+2,\ldots,\alpha\}$ and
\begin{equation}\label{3.7}
\begin{aligned}
U_X=O_{\beta+1}\uplus\left(\mathbb{Z}_{2^\alpha}\setminus 2^{\alpha-\beta}\mathbb{Z}_{2^\alpha}\right).
\end{aligned}
\end{equation}
\end{lem}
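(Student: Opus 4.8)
The plan is to follow the template of the proof of Lemma~\ref{l-3.3}. For $p=2$ the estimate $p^{\beta}(p-1)-p^{\beta}>0$ used there degenerates to the trivial $0\ge 0$, which is exactly why $\mathcal{I}_1$ may now be non-empty; the analysis therefore has to be pushed one step further. First I record a closed form for $\mathbf{q}$. By Lemma~\ref{l-3.2} we have $U_X=(O_{r_1}\cup\cdots\cup O_{r_s})\uplus(\mathbb{Z}_{2^\alpha}\setminus 2^{\alpha-\beta}\mathbb{Z}_{2^\alpha})$, so by (\ref{2.6}) and (\ref{2.7}), for $z\ne0$ with $\ell:=\nu_2(z)\in\{0,\dots,\alpha-1\}$,
\[
\mathbf{q}(z)=\sum_{i\in\mathcal{I}_1,\;i\le\ell}2^{i-1}\;-\;[\,\ell+1\in\mathcal{I}_1\,]\,2^{\ell}\;-\;[\,\ell\ge\beta\,]\,2^{\beta},
\]
where $[\,P\,]\in\{0,1\}$ denotes the truth value of $P$. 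In particular $\mathbf{q}(z)$ depends on $z$ only through $\ell$; write $Q(\ell)$ for this common value. By (\ref{assumption}) we have $Q(\ell)\in\{0,-m\}$ for every $0\le\ell\le\alpha-1$; since $\beta=\min\{\nu_2(z):z\in\Gamma\}$ we get $Q(\beta)=-m$; and $\alpha-\beta=|\mathcal{I}_1|+|\mathcal{I}_2|\ge 2$ because $\mathcal{I}_2\ne\emptyset$.

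The second step determines $m$. Evaluating the formula at $\ell=\beta$ gives $Q(\beta)=-[\,\beta+1\in\mathcal{I}_1\,]2^{\beta}-2^{\beta}$, so $m\in\{2^{\beta},2^{\beta+1}\}$. If $m=2^{\beta}$ then $\beta+1\notin\mathcal{I}_1$, hence $r':=\min\mathcal{I}_1\ge\beta+2$; evaluating at $\ell=r'-1\le\alpha-1$ yields $Q(r'-1)=-2^{r'-1}-2^{\beta}$, and since $r'-1\ge\beta+1$ this is strictly below $-2^{\beta}$ and nonzero, contradicting $Q(r'-1)\in\{0,-2^{\beta}\}$. Thus $m=2^{\beta+1}$ and $\beta+1\in\mathcal{I}_1$.

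The third step finishes. For $\beta\le\ell\le\alpha-2$ the closed form gives
\[
Q(\ell+1)-Q(\ell)=\big([\,\ell+1\in\mathcal{I}_1\,]-[\,\ell+2\in\mathcal{I}_1\,]\big)\,2^{\ell+1}.
\]
Both $Q(\ell),Q(\ell+1)$ lie in $\{0,-2^{\beta+1}\}$, so the left side lies in $\{0,\pm 2^{\beta+1}\}$; hence for $\ell\ge\beta+1$ the two indicator values agree. Letting $\ell$ run over $\{\beta+1,\dots,\alpha-2\}$ shows $[\,j\in\mathcal{I}_1\,]$ is constant for $j\in\{\beta+2,\dots,\alpha\}$. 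If that constant were $1$, then together with $\beta+1\in\mathcal{I}_1$ we would obtain $\mathcal{I}_1=\{\beta+1,\dots,\alpha\}$, whence $\mathcal{I}_2=\emptyset$, a contradiction; so it is $0$, i.e.\ $\mathcal{I}_1=\{\beta+1\}$. Then Lemma~\ref{l-3.2} gives $\mathcal{I}_2=\{\beta+2,\dots,\alpha\}$ and $U_X=O_{\beta+1}\uplus(\mathbb{Z}_{2^\alpha}\setminus 2^{\alpha-\beta}\mathbb{Z}_{2^\alpha})$, which is (\ref{3.7}).

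The main obstacle is this last extraction: the $p=2$ loss of positivity makes $\mathcal{I}_1=\emptyset$ unprovable, and the sharp conclusion comes only from combining the positivity bound $\mathbf{q}(z)\le 0$ for $z\ne0$ with the constraint that all negative values of $\mathbf{q}$ equal $-m$, together with the observation that $\ell=\beta$ is the \emph{unique} index at which $[\,j\in\mathcal{I}_1\,]$ may jump; that single jump is precisely what creates the doubled orbit $O_{\beta+1}$. One must also keep every evaluated exponent $\ell$ within $\{0,\dots,\alpha-1\}$ (so the underlying $z$ is nonzero and the constraints apply), which is where $\alpha-\beta\ge 2$ is used.
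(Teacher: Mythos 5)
Your proof is correct, and while it uses the same toolbox as the paper (the orbit decomposition of $U_X$ from Lemma~\ref{l-3.2}, the Ramanujan-sum evaluation of $\mathbf{q}$ at elements $z$ with $\nu_2(z)=\ell$, and the two constraints that $\mathbf{q}\leqslant 0$ off $0$ and that all negative values equal $-m$), the combinatorial extraction is organized quite differently. The paper proceeds by a case analysis on $s=|\mathcal{I}_1|$: it first shows $\mathcal{I}_1$ must be an interval of consecutive exponents (a gap would force a positive value of $\mathbf{q}$), then locates that interval, and disposes of $s\geqslant 2$ through the subcases $s=2$ and $s>2$ before pinning down the singleton; it never explicitly computes $m$. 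You instead write one closed form $Q(\ell)$, determine $m$ from $Q(\beta)$ (which simultaneously forces $\beta+1\in\mathcal{I}_1$, replacing the paper's separate contradiction argument for $r_1\geqslant\beta+2$), and then use the first-difference identity $Q(\ell+1)-Q(\ell)=\bigl([\ell+1\in\mathcal{I}_1]-[\ell+2\in\mathcal{I}_1]\bigr)2^{\ell+1}$ together with $|Q(\ell+1)-Q(\ell)|\leqslant 2^{\beta+1}<2^{\ell+1}$ for $\ell\geqslant\beta+1$ to show the membership indicator is constant on $\{\beta+2,\ldots,\alpha\}$. This buys a uniform argument with no case split on $s$, and it isolates the structural reason for the conclusion: $\ell=\beta$ is the only index where the indicator may jump, which is exactly why the single doubled orbit is $O_{\beta+1}$. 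Your bookkeeping of the ranges ($\alpha-\beta\geqslant 2$ so every evaluated $\ell$ corresponds to a nonzero $z$) is also in order. One small remark: your derivation yields $\mathcal{I}_2=\{\beta+2,\ldots,\alpha\}$, which is the correct complement under the partition of Lemma~\ref{l-3.2}; the set $\{\beta+1,\ldots,\alpha\}$ appearing in the printed statement of the lemma is evidently a typo.
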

\begin{proof}First,\;we show that $\mathcal{I}_1$ is a singleton set.\;Otherwise,\;we assume $s=|\mathcal{I}_1|\geqslant2$.\;Note that $\mathcal{I}_2\neq\emptyset$ and hence $\beta\leqslant\alpha-3$.\;From Lemma \ref{l-3.2} and equations (\ref{2.6}),\;(\ref{2.7}),\;we have
\begin{equation*}
\begin{aligned}
\mathbf{q}(z)=\sum_{i=1}^s\mu\left(\frac{2^{r_i}}{(2^{r_i},z)}\right)\frac{\varphi(2^{r_i})}{\varphi\left(\frac{2^{r_i}}{(2^{r_i},z)}\right)}
+2^\alpha\Delta_{0}(z)-2^\beta(\Delta_{2^{\beta}\mathbb{Z}_{2^\alpha}})(z).
\end{aligned}
\end{equation*}
We assert that $\beta+1\in \mathcal{I}_1$,\;i.e.,\;$r_1=\beta+1$.\;Otherwise we can assume $r_1\geqslant\beta+2$ and hence $\beta+1\in\mathcal{I}_2$.\;If there is a $u$ such that $r_{u+1}>r_{u}+1$ for some $1\leqslant u\leqslant s-1$,\;then
\begin{equation*}
\begin{aligned}
\mathbf{q}(2^{r_u})
&=\sum_{i=1}^u2^{r_i-1}-2^{\beta}\geqslant2^{r_1-1}-2^{\beta}\geqslant 2^{\beta+1}-2^{\beta}=2^{\beta}>0,
\end{aligned}
\end{equation*}
contradicting $(\ref{cor1})$.\;So $r_{i+1}=r_{i}+1$ for $1\leqslant i\leqslant s-1$ and then $\mathcal{I}_1=\{\gamma,\gamma+1,\ldots,\gamma+s-1\}$ for some $\gamma\geq\beta+2$.\;Note that
\begin{equation*}
\begin{aligned}
\mathbf{q}(2^{\gamma+s-1})&=\sum_{i=\gamma}^{\gamma+s-1}\mu\left(\frac{2^{i}}{(2^{i},2^{\gamma+s-1})}\right)\frac{\varphi(2^{i})}{\varphi\left(\frac{2^{i}}{(2^{i},2^{\gamma+s-1})}\right)}
-2^{\beta}\\
&=\sum_{i=\gamma}^{\gamma+s-1}\varphi(2^{i})-2^{\beta}=2^{\gamma+s-1}-2^{\gamma-1}-2^{\beta}\geqslant2^{\beta+1}(2^s-1)-2^{\beta}>0.
\end{aligned}
\end{equation*}
Then from $(\ref{cor1})$,\;$2^{\gamma+s-1}=0$,\;so $\gamma+s-1=\alpha$,\;$\mathcal{I}_1=\{\alpha-s+1,\alpha-s+2,\ldots,\alpha\}$ and $\mathcal{I}_2=\{\beta+1,\beta+2,\ldots,\alpha-s\}$.\;Since $\beta+1\in \mathcal{I}_2$,\;we have   $\alpha-s+1\geqslant\beta+2$.\;If $\alpha-s+1>\beta+2$,\;then
\begin{equation*}
\begin{aligned}
\mathbf{q}(2^{\beta+1})&=-2^{\beta}\neq\mathbf{q}(2^{\alpha-s+1})=-2^{\alpha-s}-2^{\beta},
\end{aligned}
\end{equation*}
contradicting  $(\ref{cor2})$ and hence $\alpha-s+1=\beta+2$.\;So $\mathcal{I}_1=\{\beta+2,\beta+3,\ldots,\alpha\}$ and $\mathcal{I}_2=\{\beta+1\}$.\;In this case,\;$\mathbf{q}(2^{\beta})=-2^{\beta}$,\;but
\begin{equation*}
\mathbf{q}(2^{\beta+1})=\mu\left(\frac{2^{\beta+2}}{(2^{\beta+2},2^{\beta+1})}\right)\frac{\varphi(2^{\beta+2})}{\varphi\left(\frac{2^{\beta+2}}{(2^{\beta+2},2^{\beta+1})}\right)}-2^{\beta}=-3\cdot2^{\beta}\neq\mathbf{q}(2^{\beta}),
\end{equation*}
contradicting  $(\ref{cor2})$.\;Therefore we have $\beta+1\in \mathcal{I}_1$.\;\\
{${\mathbf{Case\;1}}$}.\;$s=2$.\;In this case,\;we have
\[\mathcal{I}_1=\{\beta+1,\beta+\kappa\},\;\mathcal{I}_2=\{\beta+2,\ldots,\beta+\kappa-1,\beta+\kappa+1,\ldots,\alpha\}.\]
for some $\kappa$.\;Note that
\begin{equation*}
\begin{aligned}
\mathbf{q}(2^{\beta+\kappa})&=2^\beta+2^{\beta+\kappa-1}-2^{\beta}=2^{\beta+\kappa-1}>0,
\end{aligned}
\end{equation*}
then $2^{\beta+\kappa}= 0$ by $(\ref{cor1})$.\;Thus we have $\beta+\kappa=\alpha$ and hence $\mathcal{I}_1=\{\beta+1,\alpha\}$.\;Therefore,\; $\kappa=\alpha-\beta=t=s+(t-s)\geq 2+1=3$.\;But $\mathbf{q}(2^{\alpha-1})=-2^{\alpha-1}\neq\mathbf{q}(2^{\beta})=-2\cdot2^{\beta}=-2^{\beta+1}$,\;which leads to a contradiction to $(\ref{cor2})$.\\
{${\mathbf{Case\;2}}$}.\;$s>2$.\;Similar to the discussion of equation (\ref{5.5}),\;we can also obtain
\[\mathcal{I}_1=\{\beta+1,\beta+\kappa,\ldots,\beta+\kappa+s-2\}\;\text{and\;}\mathcal{I}_2=\{\beta+1,\ldots,\alpha\}\setminus\mathcal{I}_1\]
for some $\kappa\geq2$.\;Then we have
\begin{equation*}
\mathbf{q}(2^{\beta+\kappa})=2^\beta+\varphi(2^{\beta+\kappa})-2^{\beta+\kappa}-2^\beta=-2^{\beta+\kappa-1}.
\end{equation*}
Note that
\begin{equation*}
\begin{aligned}
\mathbf{q}(2^{\beta+\kappa+s-2})&=2^\beta+\sum_{i=\beta+\kappa}^{\beta+\kappa+s-2}\varphi(2^{i})-2^\beta=2^{\beta+\kappa-1}(2^{s-1}-1)>0,
\end{aligned}
\end{equation*}
then $2^{\beta+\kappa+s-2}=0$ from $(\ref{cor1})$,\;so $\beta+\kappa+s-2=\alpha$ and $\mathcal{I}_1=\{\beta+1,\beta+\kappa,\ldots,\alpha\}$.\;In this case,\;
$\mathcal{I}_2=\{\beta+2,\ldots,\beta+\kappa-1\}\neq\emptyset\Rightarrow \kappa>2$,\;but $\mathbf{q}(2^{\beta})=-2\cdot2^{\beta}=-2^{\beta+1}>-2^{\beta+\kappa-1}$,\;this is impossible.\;

Combining {${\mathbf{Case\;1}}$} and {${\mathbf{2}}$},\;we can obtain that $s=|\mathcal{I}_1|=1$.\;Let $\mathcal{I}_1=\{\gamma\}$ with some $\alpha\leqslant\gamma\leqslant\beta+1$.\;If $\mathcal{I}_1=\{\alpha\}$,\;then we have
\[\mathbf{q}(2^{\alpha-2})=-2^{\beta}\neq \mathbf{q}(2^{\alpha-1})=-2^{\beta}-2^{\alpha-1}.\]
This is also a contradiction,\;so $\gamma<\alpha$,\;then
\begin{equation*}
\mathbf{q}(2^{\gamma})=2^{\gamma-1}-2^{\beta}\leqslant0\Rightarrow \gamma-1\leqslant \beta,\gamma\leqslant \beta+1.
\end{equation*}
This gives that $\mathcal{I}_1=\{\beta+1\}$ and therefore
\begin{equation*}
\begin{aligned}
U_X=O_{\beta+1}\cup\left(\mathbb{Z}_{2^\alpha}\setminus 2^{\alpha-\beta}\mathbb{Z}_{2^\alpha}\right)
\end{aligned}
\end{equation*}
from Lemma \ref{l-3.2}.
\end{proof}

\begin{lem}\label{PO}(\cite{PO},Proposition $1.2.12$,\;Corollary $1.2.13$.)\label{l-3.5}\;Let $p$ be a prime and let $G=H\times P$ be an abelian group with a cyclic Sylow $p$-subgroup $P$ of order $p^{s}$.\;Let $P_i$ denote the unique subgroup of order $p^i$ in $G$.\;If $Y\in \mathbb{Z}[G]$ satisfies
\[\chi(Y)\equiv 0 \;\mathrm{mod}\;p^a \]
for all character $\chi$ of order divisible by $p^{s-r}$,\;where $r$ is some fixed number $r\leqslant s$,\;
$r\leqslant a$,\;then there are elements $X_0,X_1,\ldots,X_r,X$ in $\mathbb{Z}[G]$ such that
\[Y=p^a X_0+p^{a-1}P_1X_1+\ldots+p^{a-r}P_rX_r+P_{r+1}X.\]
If $r =\min\{a,s\}$,\;then
\[Y=p^a X_0+p^{a-1}P_1X_1+\ldots+p^{a-r}P_rX_r.\]
Moreover,\;if $Y$ has non-negative coefficients,\;then we can choose the $X_i$ such
that they have non-negative coefficients.
\end{lem}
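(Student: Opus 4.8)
The plan is to prove the lemma by induction on $r$. Throughout, write $G=H\times P$ with $P=\langle g\rangle$ cyclic of order $p^{s}$ and $\gcd(|H|,p)=1$, and let $\overline{P_i}=\sum_{h\in P_i}h\in\mathbb{Z}[G]$ (the group-ring element that the statement writes simply as $P_i$). A preliminary observation is the dictionary between the hypothesis and the subgroup filtration of $P$: since $|H|$ is prime to $p$, the order of a character $\chi=\chi_H\otimes\chi_P$ is divisible by $p^{s-r}$ exactly when $\chi_P$ has order $\ge p^{s-r}$, which in turn holds exactly when $\chi$ is non-trivial on the subgroup $P_{r+1}$ of order $p^{r+1}$ (with the convention that for $r=s$ this just means ``all $\chi$''). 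This is the form of the hypothesis I will actually use.

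The base case $r=0$ is the analytic core, and where I expect the real difficulty. Its hypothesis is $\chi(Y)\equiv0\pmod{p^{a}}$ for every $\chi$ whose restriction to $P$ is faithful, and the goal is $Y=p^{a}X_{0}+\overline{P_1}X$, i.e. that the image $\widetilde Y$ of $Y$ in $R:=\mathbb{Z}[G]/\overline{P_1}\mathbb{Z}[G]$ lies in $p^{a}R$. The key identity is that inside $\mathbb{Z}[P]=\mathbb{Z}[g]/(g^{p^{s}}-1)$ one has $\overline{P_1}=1+g^{p^{s-1}}+\cdots+g^{(p-1)p^{s-1}}=\Phi_{p^{s}}(g)$, and $g^{p^{s}}-1$ is a multiple of $\Phi_{p^{s}}(g)$, so $\mathbb{Z}[P]/\overline{P_1}\mathbb{Z}[P]\cong\mathbb{Z}[\zeta_{p^{s}}]$ and hence $R\cong\mathbb{Z}[H]\otimes_{\mathbb{Z}}\mathbb{Z}[\zeta_{p^{s}}]$, with the faithful-on-$P$ characters of $G$ becoming exactly the ring homomorphisms $R\to\mathbb{C}$. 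I would then invert $m:=|H|$, which is harmless for $p$-divisibility since $m$ is a unit modulo $p^{a}$ (so $R/p^{a}R\hookrightarrow R[1/m]/p^{a}R[1/m]$), and use that $\mathbb{Z}[1/m][H]$ is a finite product of rings $\mathbb{Z}[1/m][\zeta_{d}]$ with $d\mid m$; thus $R[1/m]$ becomes a product of localized cyclotomic integer rings $\mathbb{Z}[1/m][\zeta_{dp^{s}}]$. In each such factor $p$ is totally ramified (with ramification index $\varphi(p^{s})$) and the Galois group permutes the primes above $p$ transitively, so ``$\sigma(x)$ divisible by $p^{a}$ for every embedding $\sigma$'' forces $v_{\mathfrak p}(x)\ge a\varphi(p^{s})$ at every prime $\mathfrak p\mid p$, i.e. $p^{a}\mid x$. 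Applying this component-wise to $\widetilde Y$ gives $\widetilde Y\in p^{a}R[1/m]\cap R=p^{a}R$, as wanted.

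For the inductive step ($r\ge1$) I would first invoke the case $r=0$ — whose hypothesis is weaker than the present one — to write $Y=p^{a}X_{0}+\overline{P_1}Z$ for some $Z\in\mathbb{Z}[G]$. Evaluating those characters $\chi$ of the hypothesis that are in addition trivial on $P_1$ (for which $\chi(\overline{P_1})=p$) gives $p\,\chi(Z)=\chi(Y)-p^{a}\chi(X_{0})\equiv0\pmod{p^{a}}$, hence $\chi(Z)\equiv0\pmod{p^{a-1}}$. Such $\chi$ factor through $\bar G:=G/P_1\cong H\times(P/P_1)$, so the image $\bar Z\in\mathbb{Z}[\bar G]$ of $Z$ satisfies the lemma's hypothesis for $\bar G$ with parameters $r-1$ and $a-1$ (note $r-1\le\min(a-1,s-1)$), and induction gives $\bar Z=\sum_{i=0}^{r-1}p^{a-1-i}\overline{\bar P_i}\,\bar X_{i+1}$, plus a term $\overline{\bar P_r}\bar X$ precisely when $r-1<\min(a-1,s-1)$, where $\bar P_i=P_{i+1}/P_1$. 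Lifting each $\bar X_{i+1}$ arbitrarily to $\mathbb{Z}[G]$ and lifting $\overline{\bar P_i}$ to a transversal sum $T_i$ of $P_1$ in $P_{i+1}$ (so that $\overline{P_{i+1}}=\overline{P_1}\,T_i$), one gets $Z=\sum_i p^{a-1-i}T_iX_{i+1}+T_rX+(g_1-1)W$ with $g_1$ a generator of $P_1$; multiplying by $\overline{P_1}$ annihilates the last summand since $\overline{P_1}(g_1-1)=0$, leaving $\overline{P_1}Z=\sum_{j=1}^{r}p^{a-j}\overline{P_j}X_j+\overline{P_{r+1}}X$. Therefore $Y=\sum_{j=0}^{r}p^{a-j}\overline{P_j}X_j+\overline{P_{r+1}}X$, the last term being present exactly when $r<\min(a,s)$, which matches the statement.

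Finally, for the non-negativity refinement: the transversal sums $T_i$ have coefficients in $\{0,1\}$, the liftings of non-negative $\bar X_{i+1}$ can be taken non-negative, and $(g_1-1)W$ is annihilated, so the only summand whose non-negativity is not automatic is $p^{a}X_{0}$; one arranges this within the base case by a greedy choice of $X_{0}$ as a ``non-negative part'' of $Y$ modulo $\overline{P_1}$, absorbing the remainder into the $\overline{P_1}$-term, and I would reproduce or cite that argument from \cite{PO}. The main obstacle throughout is the base case: carrying out the reduction modulo $\overline{P_1}$ and the cyclotomic divisibility argument carefully, in particular coping with the fact that $\mathbb{Z}[H]$ is not an integral domain, which is what forces the passage to $\mathbb{Z}[1/m][H]$ and its decomposition into cyclotomic factors.
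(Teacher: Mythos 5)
The paper offers no proof of this lemma at all --- it is imported verbatim from \cite{PO} (Proposition 1.2.12 and Corollary 1.2.13) --- so there is no in-paper argument to measure yours against; what you have written is essentially the standard proof of that cited result, and I believe it is correct in substance. The base case is sound: $\overline{P_1}=\Phi_{p^s}(g)$, $\mathbb{Z}[P]/\overline{P_1}\mathbb{Z}[P]\cong\mathbb{Z}[\zeta_{p^s}]$, and the splitting of $\mathbb{Z}[1/|H|][H]$ into cyclotomic factors all check out (in fact, since divisibility by the rational integer $p^a$ is Galois-stable, a single embedding per factor already forces $p^a\mid x$ in that factor, so the transitivity-on-primes and ramification discussion is more machinery than you need; also ``totally ramified'' is not literally right when $d>1$, though your parenthetical ramification index is). The inductive step via $G/P_1$, the identity $\overline{P_1}(g_1-1)=0$, and the transversal sums $T_i$ with $\overline{P_1}T_i=\overline{P_{i+1}}$ is exactly the right mechanism, and the bookkeeping of exponents matches the statement.

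Two loose ends should be tied up before this counts as a complete proof. First, your induction implicitly assumes the quotient reached at $r'=0$ still has a nontrivial Sylow $p$-subgroup; when $r=s$ the final quotient is $H$ itself, where there is no $P_1$ and the conclusion must be $Y=p^{a'}X_0$ outright. This degenerate case (together with the trivial case $a'=0$, where the $\overline{P_1}X$ term is absorbed into $p^0X_0$) is precisely what delivers the ``$r=\min\{a,s\}$'' clause, and it needs its own one-line argument --- Fourier inversion over $H$ using that $|H|$ is a unit modulo $p$. Second, the non-negativity refinement is only gestured at: the greedy choice you describe (on each coset of $P_1$ take the $\overline{P_1}$-coefficient to be the minimum of $Y$'s coefficients there, so that the $p^a$-part is automatically non-negative) does work and does propagate through the induction because the correction term $(g_1-1)W$ is annihilated by $\overline{P_1}$, but as written you are still outsourcing that step to \cite{PO} rather than proving it.
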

We now give another version of Lemma \ref{PO}.\;
\begin{lem}\label{l-3.6}Let $p$ be a prime and  $X$ be a multisubset of $\mathbb{Z}_{p^\alpha}$ such that the multiplicity of each element in $X$ at most $p-1$,\;i.e.,\;$\Delta_X(z)\leqslant p-1$ for all $z\in X$.\;If $a\leqslant \alpha-1$ and $X$ satisfies
\[(\mathcal{F}\Delta_X)(z)\equiv0\;\mathrm{mod}\;p^a\]
for all $z$ with $\nu_p(z)\leqslant a$,\;then there is a multiset $S\subseteq \mathbb{Z}_{p^\alpha-a}$ such that
$$X=\psi_{\alpha-a}^{-1}(S),\;$$
i.e.,\;$X$ is a collection of some cosets of the subgroup $p^{\alpha-a}\mathbb{Z}_{p^\alpha}$ in $\mathbb{Z}_{p^\alpha}$.\;
\end{lem}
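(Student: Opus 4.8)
The plan is to feed $Y=\overline{X}$ to Lemma~\ref{l-3.5}, obtain a ``telescoping'' decomposition of $\overline{X}$, and then use the hypothesis $\Delta_X(z)\le p-1$ to kill every term of that decomposition except the last. (If $a=0$ the claim merely says $X$ is a union of singleton cosets, which is trivial, so assume $a\ge1$.) To set up Lemma~\ref{l-3.5}, take $G=\mathbb{Z}_{p^\alpha}$, which is its own cyclic Sylow $p$-subgroup, so $s=\alpha$ and $P_i=p^{\alpha-i}\mathbb{Z}_{p^\alpha}$ is the unique subgroup of order $p^i$. By~(\ref{2.5}) we have $\chi_z(\overline{X})=(\mathcal{F}\Delta_X)(z)$, and since $\chi_z$ has order $p^{\alpha-\nu_p(z)}$, it has order divisible by $p^{\alpha-a}$ exactly when $\nu_p(z)\le a$. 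Thus the hypothesis says precisely that $\chi(\overline{X})\equiv 0 \bmod p^a$ for every character of order divisible by $p^{s-r}$ with $r=a$; as $a\le\alpha-1<\alpha=s$, we have $r=a=\min\{a,s\}$, so Lemma~\ref{l-3.5} supplies $\overline{X_0},\dots,\overline{X_a}\in\mathbb{Z}[G]$ with non-negative coefficients and
\[\overline{X}=p^a\overline{X_0}+p^{a-1}\overline{P_1}\,\overline{X_1}+\cdots+p^{1}\overline{P_{a-1}}\,\overline{X_{a-1}}+\overline{P_a}\,\overline{X_a}.\]

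Next I would compare coefficients of $x^i$ on both sides. Each $\overline{P_j}\,\overline{X_j}$ is a subgroup sum times a non-negative element, so its coefficient at $x^i$ is the non-negative integer $M^{(j)}_i:=\sum_{h\in P_j}\Delta_{X_j}(i-h)$, and
\[\Delta_X(i)=p^a\Delta_{X_0}(i)+\sum_{j=1}^{a}p^{a-j}M^{(j)}_i,\]
a sum of non-negative integers bounded above by $p-1$. For the $X_0$ term the scalar coefficient is $p^a\ge p>p-1$, and for $1\le j\le a-1$ it is $p^{a-j}\ge p>p-1$; hence $\Delta_{X_0}(i)=0$ and $M^{(1)}_i=\cdots=M^{(a-1)}_i=0$ for every $i$, so $\Delta_X(i)=M^{(a)}_i$ for all $i$, i.e.\ $\overline{X}=\overline{P_a}\,\overline{X_a}$. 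Writing $\overline{X_a}=\sum_g\Delta_{X_a}(g)\,g$ and grouping the summands by cosets of $P_a=p^{\alpha-a}\mathbb{Z}_{p^\alpha}=\ker\psi_{\alpha-a}$ exhibits $\overline{X}=\sum_{C}n_C\,\overline{C}$ as a non-negative integer combination of the coset sums of $P_a$; consequently $\Delta_X$ is constant on each coset $C$, with value $n_C=\Delta_X(g)\le p-1$. Setting $\Delta_S(w):=\Delta_X(g)$ for any $g\in\psi_{\alpha-a}^{-1}(w)$ then produces a multiset $S\subseteq\mathbb{Z}_{p^{\alpha-a}}$ with $X=\psi_{\alpha-a}^{-1}(S)$, as required.

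The two translations here—the hypothesis into the character condition of Lemma~\ref{l-3.5}, and the identity $\overline{X}=\overline{P_a}\,\overline{X_a}$ into ``$X$ is a union of cosets''—are routine bookkeeping. The single load-bearing point is the observation that, $p-1$ being strictly smaller than $p$, the multiplicity bound annihilates every term of the Lemma~\ref{l-3.5} decomposition whose scalar coefficient is a positive power of $p$, collapsing it to $\overline{P_a}\,\overline{X_a}$; so the main thing to get right is checking that Lemma~\ref{l-3.5} is invoked with exactly $r=a$ and that the assumption $a\le\alpha-1$ keeps $r=a$ strictly below $s$, placing us in the clean form of that lemma.
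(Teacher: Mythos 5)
Your proposal is correct and follows essentially the same route as the paper: both translate the Fourier condition into the character condition of Lemma~\ref{l-3.5} with $r=a=\min\{a,\alpha\}$, obtain the decomposition $\overline{X}=p^aX_0+p^{a-1}\overline{P_1}X_1+\cdots+\overline{P_a}X_a$, and use the multiplicity bound $\Delta_X\le p-1$ to force all terms with scalar factor a positive power of $p$ to vanish, leaving $\overline{X}=\overline{P_a}X_a$. Your coefficient-by-coefficient bookkeeping is, if anything, slightly more explicit than the paper's.
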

\begin{proof}We note that
\[\chi_z(\overline{x^X})=(\mathcal{F}\Delta_X)(z).\]
Meawhile,\;the order of the character $\chi_z$ is $p^{\alpha-\nu_p(z)}$ and this order is divisible by $p^{\alpha-a}$ for each $z$ with $\nu_p(z)\leqslant a$.\;Thus the conditions given in this lemma can imply that $\chi(\overline{x^X})\equiv 0 \;\mathrm{mod}(p^a)$ for all characters $\chi$ of order divisible by $p^{\alpha-r}$.\;From Lemma \ref{l-2.3} with $r =\min\{a,\alpha\}=a$,\;there are elements $X_0,X_1,\ldots,X_r$ with non-negative coefficients in $\mathbb{Z}[C_{p^\alpha}]$ such that
\[\overline{x^X}=p^a X_0+p^{a-1}\overline{P_1}X_1+\ldots+p^{a-r}\overline{P_a}X_a=p^a X_0+p^{a-1}\overline{P_1}X_1+\ldots+p\overline{P_{a-1}}X_{a-1}+\overline{P_a}X_a.\]
 It is clear that all the coefficients in $p^{a-j}\overline{P_j}X_j$ are at least $p$ provided $X_j\neq 0$,\;for $0\leqslant j\leqslant a-1$.\;This gives that  $X_0=X_1=\ldots=X_{a-1}=0$ since none of the coefficients in $\overline{x^X}$ exceeds $p-1$.\;Note that $X_a=\overline{x^S}$ for some multiset $S$ of $\mathbb{Z}_{p^\alpha}$ and hence
$\overline{x^X}=\overline{P_a}X_a=x^{p^{\alpha-a}\mathbb{Z}_{p^\alpha}}\overline{x^S}=x^{S+p^{\alpha-a}\mathbb{Z}_{p^\alpha}}$,\;this claims that the multiset $X$ is a collection of some cosets of the subgroup $p^{\alpha-a}\mathbb{Z}_{p^\alpha}$ in $\mathbb{Z}_{p^\alpha}$.\;The  result  follows.\;
\end{proof}
\begin{lem}\label{l-3.7}Let $X$ be a subset of $\mathbb{Z}_{p^\alpha}$ and $0< a\leqslant \alpha$ be a positive integer.\;If $X$ satisfies
\[(\mathcal{F}\Delta_X)(z)=0\]
 for all $z\not\in p^a\mathbb{Z}_{p^\alpha}$,\;then there is a multiset $S\subseteq \mathbb{Z}_{p^\alpha-a}$ such that
$$X=\psi_{\alpha-a}^{-1}(S).\;$$
\end{lem}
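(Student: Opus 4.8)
The statement of Lemma~\ref{l-3.7} is a variant of Lemma~\ref{l-3.6}: there the Fourier transform was required only to be divisible by $p^a$ on a set of characters, here it is required to \emph{vanish} on the complement of $p^a\mathbb{Z}_{p^\alpha}$, which is a stronger hypothesis but also allows $a=\alpha$. The plan is to deduce it directly from Lemma~\ref{l-3.6} together with the inversion formula~(\ref{2.3}). First I would translate the hypothesis: since $(\mathcal{F}\Delta_X)(z)=0$ for all $z\notin p^a\mathbb{Z}_{p^\alpha}$, a fortiori $(\mathcal{F}\Delta_X)(z)\equiv 0\bmod p^{a}$ for all such $z$, hence for all $z$ with $\nu_p(z)\le a-1$ (these lie outside $p^a\mathbb{Z}_{p^\alpha}$). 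If $a\le\alpha-1$ this is not quite the hypothesis of Lemma~\ref{l-3.6}, which asks for divisibility at all $z$ with $\nu_p(z)\le a$; but at $z$ with $\nu_p(z)=a$ we have $z\in p^a\mathbb{Z}_{p^\alpha}$, so nothing is assumed there. The honest route is therefore to work with $a'=a-1$ when $a\le\alpha-1$, or better, to argue directly.

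The cleanest argument is via inversion. By~(\ref{2.3}), $\Delta_X(-z)=\frac1{p^\alpha}(\mathcal{F}(\mathcal{F}\Delta_X))(z)=\frac1{p^\alpha}\sum_{w\in\mathbb{Z}_{p^\alpha}}(\mathcal{F}\Delta_X)(w)\,\zeta_{p^\alpha}^{wz}$, and since $(\mathcal{F}\Delta_X)(w)$ vanishes off $p^a\mathbb{Z}_{p^\alpha}$, the sum runs only over $w\in p^a\mathbb{Z}_{p^\alpha}$. Writing $w=p^a u$ with $u$ ranging over a transversal, one sees $\zeta_{p^\alpha}^{wz}=\zeta_{p^\alpha}^{p^a u z}$ depends on $z$ only modulo $p^{\alpha-a}$; hence $\Delta_X(-z)=\Delta_X(-z')$ whenever $z\equiv z'\pmod{p^{\alpha-a}\mathbb{Z}}$ — wait, one must be careful: $p^a u z$ modulo $p^\alpha$ depends on $z$ modulo $p^{\alpha-a}$, so indeed $\Delta_X$ is constant on each coset of $p^{\alpha-a}\mathbb{Z}_{p^\alpha}$. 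Since $X$ is an ordinary subset, $\Delta_X\in\{0,1\}$, so each such coset is either entirely in $X$ or entirely disjoint from $X$; that is, $X=S+p^{\alpha-a}\mathbb{Z}_{p^\alpha}$ for a suitable set $S$ of coset representatives, i.e.\ $X=\psi_{\alpha-a}^{-1}(S)$ for some $S\subseteq\mathbb{Z}_{p^{\alpha-a}}$, where here $S$ is an honest set and in particular the stated multiset has multiplicities in $\{0,1\}$ on each coset, matching the notation $\psi_{\alpha-a}^{-1}$.

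Alternatively, and perhaps more in the spirit of the surrounding lemmas, one can invoke Lemma~\ref{l-3.5} directly with $a$ replaced by $\alpha$ and $r=\min\{\alpha,\alpha\}=\alpha$: the condition $\chi(\overline{x^X})=0$ for all $\chi$ of order divisible by $p^{\alpha-(\alpha-a)}=p^a$ — equivalently for all $\chi_z$ with $\nu_p(z)\le a-1$, since $\mathrm{ord}(\chi_z)=p^{\alpha-\nu_p(z)}$ — gives $\chi(\overline{x^X})\equiv 0\bmod p^\alpha$ trivially for those characters, so with $r=\alpha-a$ and the divisibility exponent taken to be $\alpha-a$ one gets $\overline{x^X}=\sum_{j=0}^{\alpha-a}p^{\alpha-a-j}\overline{P_j}X_j$ with non-negative coefficients. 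As in the proof of Lemma~\ref{l-3.6}, every term with $j<\alpha-a$ contributes coefficients that are multiples of $p$, which is impossible for the $\{0,1\}$-valued $\overline{x^X}$ unless those $X_j$ vanish; hence $\overline{x^X}=\overline{P_{\alpha-a}}\,X_{\alpha-a}=x^{p^a\mathbb{Z}_{p^\alpha}}\overline{x^S}$ — here I must track the index carefully, as $P_{\alpha-a}$ should be the subgroup of order $p^{\alpha-a}$, namely $p^a\mathbb{Z}_{p^\alpha}$ — so $X=S+p^a\mathbb{Z}_{p^\alpha}$. Then $X=\psi_{\alpha-a}^{-1}(\pi(S))$ for the image $\pi(S)\subseteq\mathbb{Z}_{p^{\alpha-a}}$, which is the claim.

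\textbf{Main obstacle.} None of the analytic content is hard; the only delicate point is bookkeeping of indices — which power of $p$ labels the subgroup, whether one writes $p^a\mathbb{Z}_{p^\alpha}$ (order $p^{\alpha-a}$) or $p^{\alpha-a}\mathbb{Z}_{p^\alpha}$ (order $p^a$), and matching this against the hypotheses of Lemma~\ref{l-3.5} which are phrased in terms of character orders. One must also handle the boundary case $a=\alpha$ separately (where $(\mathcal{F}\Delta_X)(z)=0$ for all $z\ne 0$ forces $X$ to be a multiple of $\mathbb{Z}_{p^\alpha}$, hence either empty or all of $\mathbb{Z}_{p^\alpha}$, both of which are of the asserted form with $\mathbb{Z}_{p^{\alpha-a}}=\mathbb{Z}_1$ trivial), and the case $a\le\alpha-1$ where the reduction to Lemma~\ref{l-3.6} needs the shift from $a$ to $a-1$ noted above, or else the direct inversion argument which sidesteps that issue entirely. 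I would present the inversion-formula proof as the primary argument since it is self-contained and avoids re-deriving the coefficient-chasing of Lemma~\ref{l-3.6}.
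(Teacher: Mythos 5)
Your primary (inversion-formula) argument is correct and is essentially the paper's own proof: both show, using $(\ref{2.3})$ and the vanishing of $\mathcal{F}\Delta_X$ off $p^a\mathbb{Z}_{p^\alpha}$, that $\Delta_X$ is constant on cosets of $p^{\alpha-a}\mathbb{Z}_{p^\alpha}$, whence $X=\psi_{\alpha-a}^{-1}(S)$. The alternative route via Lemma \ref{l-3.5} and the boundary-case remarks are fine but not needed.
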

\begin{proof}For any $z'\in p^{\alpha-a}\mathbb{Z}_{p^\alpha}$,\;it follows from inverse formula \ref{2.3} that
\begin{equation*}
\begin{aligned}
&\;\;\;\;\;\;p^\alpha(\Delta_X(z+z')-\Delta_X(z))\\
&=\sum_{i\in \mathbb{Z}_{p^\alpha}}(\mathcal{F}\Delta_X)(i)(\zeta_{p^\alpha}^{-i(z+z')}-\zeta_{p^\alpha}^{-iz})\\
&=\left(\sum_{i\in p^a\mathbb{Z}_{p^\alpha}}+\sum_{i\not\in p^a\mathbb{Z}_{p^\alpha}}\right)(\mathcal{F}\Delta_X)(i)(\zeta_{p^\alpha}^{-i(z+z')}-\zeta_{p^\alpha}^{-iz})\\
&=0.
\end{aligned}
\end{equation*}
This shows that $X$ is a union of some cosets of $p^{\alpha-a}\mathbb{Z}_{p^\alpha}$ in $\mathbb{Z}_{p^\alpha}$.\;This proves the lemma.\;
\end{proof}

\section{Directed strongly regular dihedrant}
We now give a characterization of the dihedrant $Dih(n,X,Y)$ to be directed strongly regular.\;Let $\Delta_1=\overline{x^X}+\overline{x^{-X}}=\overline{x^{U_X}}$ and $\Delta_2=\overline{x^Y}\;\overline{x^{-Y}}-\overline{x^X}\;\overline{x^{-X}}$.\;
\begin{lem}\label{l-4.1}The dihedrant $Dih(n,X,Y)$ is a DSRG with parameters $(2n,|X|+|Y|, \mu, \lambda, t)$ if and only if $X$ and $Y$ satisfy the following conditions:
\begin{flalign}
(i)\;&\overline{x^Y}\Delta_1=(\lambda-\mu)\overline{x^Y}+\mu\overline{C_n};\hspace{270pt}\label{4.1}\\
(ii)\;&\overline{x^X}\Delta_1+\Delta_2=\overline{x^X}^2+\overline{x^Y}\;\overline{x^{-Y}}=(t-\mu)e+(\lambda-\mu)\overline{x^X}+\mu\overline{C_n}.\label{4.2}
\end{flalign}
\end{lem}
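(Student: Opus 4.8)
\emph{Proof proposal.} The plan is to apply Lemma~\ref{l-2.1} to $G=D_n$ and $S=x^X\cup x^Ya$, and then to decompose the resulting group-ring identity into its ``$C_n$-component'' and ``$C_n a$-component''. First observe that the numerical conditions of Lemma~\ref{l-2.1} are automatic here: $|D_n|=2n$, and since $x^X\subseteq C_n$ and $x^Ya\subseteq C_na$ are disjoint, $|S|=|X|+|Y|$ (here $0\notin X$ is implicit in $Dih(n,X,Y)$ being defined). Hence the statement reduces to showing that
\[
\overline{S}^2=te+\lambda\overline{S}+\mu\bigl(\overline{D_n}-e-\overline{S}\bigr)
\]
is equivalent to the pair (\ref{4.1})--(\ref{4.2}). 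The structural fact I would use is that $\mathbb{C}[D_n]=\mathbb{C}[C_n]\oplus\mathbb{C}[C_n]\,a$: every element of $\mathbb{C}[D_n]$ is uniquely $u+va$ with $u,v\in\mathbb{C}[C_n]$. Combined with $a^2=e$ and $ax^i=x^{-i}a$, this gives, for all $A,B\subseteq\mathbb{Z}_n$,
\[
a\,\overline{x^A}=\overline{x^{-A}}\,a,\qquad\bigl(\overline{x^A}\,a\bigr)\bigl(\overline{x^B}\,a\bigr)=\overline{x^A}\,\overline{x^{-B}}.
\]

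Next I would expand $\overline{S}=\overline{x^X}+\overline{x^Y}\,a$. Using the two identities above and the commutativity of $\mathbb{C}[C_n]$, the four terms of $\overline{S}^2$ collect as
\[
\overline{S}^2=\bigl(\overline{x^X}^2+\overline{x^Y}\,\overline{x^{-Y}}\bigr)+\overline{x^Y}\bigl(\overline{x^X}+\overline{x^{-X}}\bigr)a,
\]
so the $C_n$-component of $\overline{S}^2$ is $\overline{x^X}^2+\overline{x^Y}\,\overline{x^{-Y}}$ and its $C_na$-component is $\overline{x^Y}\,\Delta_1$, where $\Delta_1=\overline{x^{U_X}}=\overline{x^X}+\overline{x^{-X}}$. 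On the right-hand side, $\overline{D_n}=\overline{C_n}+\overline{C_n}\,a$ and $\overline{S}=\overline{x^X}+\overline{x^Y}\,a$, so its $C_n$-component is $(t-\mu)e+(\lambda-\mu)\overline{x^X}+\mu\overline{C_n}$ and its $C_na$-component is $(\lambda-\mu)\overline{x^Y}+\mu\overline{C_n}$.

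Since $\mathbb{C}[D_n]=\mathbb{C}[C_n]\oplus\mathbb{C}[C_n]\,a$ is a direct sum, the DSRG identity holds if and only if these two components agree separately. Matching the $C_na$-components is exactly (\ref{4.1}). Matching the $C_n$-components gives $\overline{x^X}^2+\overline{x^Y}\,\overline{x^{-Y}}=(t-\mu)e+(\lambda-\mu)\overline{x^X}+\mu\overline{C_n}$; finally I would substitute the one-line identity
\[
\overline{x^X}\,\Delta_1+\Delta_2=\overline{x^X}^2+\overline{x^X}\,\overline{x^{-X}}+\bigl(\overline{x^Y}\,\overline{x^{-Y}}-\overline{x^X}\,\overline{x^{-X}}\bigr)=\overline{x^X}^2+\overline{x^Y}\,\overline{x^{-Y}},
\]
which follows directly from the definitions $\Delta_1=\overline{x^X}+\overline{x^{-X}}$ and $\Delta_2=\overline{x^Y}\,\overline{x^{-Y}}-\overline{x^X}\,\overline{x^{-X}}$, to rewrite the $C_n$-part in the stated form (\ref{4.2}).

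I do not expect a genuine obstacle; the only point needing care is the non-commutative bookkeeping --- specifically, applying $a\,\overline{x^A}=\overline{x^{-A}}\,a$ correctly when reducing the cross term $\overline{x^Y}\,a\,\overline{x^X}$ to $\overline{x^Y}\,\overline{x^{-X}}\,a$ and the term $\overline{x^Y}\,a\,\overline{x^Y}\,a$ to $\overline{x^Y}\,\overline{x^{-Y}}$ --- together with keeping the signs straight in the $\Delta_1,\Delta_2$ substitution.
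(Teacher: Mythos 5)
Your proposal is correct and follows essentially the same route as the paper: expand $\overline{S}^2=(\overline{x^X}+\overline{x^Y}a)^2$ using $a\,\overline{x^A}=\overline{x^{-A}}a$, identify the $C_n$- and $C_na$-components with $\overline{x^X}\Delta_1+\Delta_2$ and $\overline{x^Y}\Delta_1$ respectively, and match them against the corresponding components of the right-hand side of Lemma~\ref{l-2.1}. No gaps.
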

\begin{proof}Note that
\begin{equation*}
\begin{aligned}
(\overline{x^X}+\overline{x^Ya})^2&=\overline{x^{X}}\;\overline{x^{X}}+\overline{x^X}\;\overline{x^Ya}+\overline{x^Ya}\;\overline{x^X}+\overline{x^Ya}\;\overline{x^Ya}\\
& =\overline{x^{X}}\;\overline{x^{X}}+\overline{x^{Y}}\;\overline{x^{-Y}}+(\overline{x^{X}}\;\overline{x^{Y}}+\overline{x^{Y}}\;\overline{x^{-X}})a\\
&=\overline{x^X}\Delta_1+\Delta_2+(\overline{x^Y}\Delta_1)a.
\end{aligned}
\end{equation*}
Thus,\;from Lemma \ref{l-2.1},\;the dihedrant $Dih(n,X,Y)$ is a DSRG with parameters $(2n,|X|+|Y|, \mu, \lambda, t)$ if and only if
\begin{equation*}
\begin{aligned}
(\overline{x^X}\Delta_1+\Delta_2)+(\overline{x^Y}\Delta_1)a&=te+\lambda(\overline{x^X}+\overline{x^Ya})+\mu(\overline{D_n}-(\overline{x^X}+\overline{x^Ya})-e)\\
&=(t-\mu)e+(\lambda-\mu)\overline{x^X}+\mu\overline{C_n}+((\lambda-\mu)\overline{x^Y}+\mu\overline{C_n})a\\
\end{aligned}
\end{equation*}
This  is equivalent to conditions $(i)$ and $(ii)$.\;
\end{proof}
When $Y=X$,\;we have the following
\begin{lem}\label{l-4.2}The dihedrant $Dih(n,X,X)$ is a DSRG with parameters $(2n,|X|+|Y|, \mu, \lambda, t)$ if and only if $t=\mu$ and $X$ satisfy the following conditions:
\begin{flalign}
\overline{x^X}\Delta_1=(\lambda-\mu)\overline{x^X}+\mu\overline{C_n}\label{4.3}
\end{flalign}
\end{lem}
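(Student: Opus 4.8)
The plan is to obtain Lemma~\ref{l-4.2} as the immediate specialization $Y=X$ of Lemma~\ref{l-4.1}. First I would record the one computation that makes the case $Y=X$ degenerate: when $Y=X$ one has $\overline{x^Y}\,\overline{x^{-Y}}=\overline{x^X}\,\overline{x^{-X}}$ in the group ring $\mathbb{C}[C_n]$, hence the term $\Delta_2=\overline{x^Y}\,\overline{x^{-Y}}-\overline{x^X}\,\overline{x^{-X}}$ appearing in Lemma~\ref{l-4.1} vanishes.

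Next I would substitute $Y=X$ and $\Delta_2=0$ into conditions (\ref{4.1}) and (\ref{4.2}). Condition (\ref{4.1}) becomes precisely the asserted identity (\ref{4.3}), namely $\overline{x^X}\Delta_1=(\lambda-\mu)\overline{x^X}+\mu\overline{C_n}$, while condition (\ref{4.2}) becomes $\overline{x^X}\Delta_1=(t-\mu)e+(\lambda-\mu)\overline{x^X}+\mu\overline{C_n}$. Subtracting these two equalities in $\mathbb{C}[C_n]$ leaves $(t-\mu)e=0$; since the group elements of $C_n$ form a $\mathbb{C}$-basis of $\mathbb{C}[C_n]$, this is equivalent to $t=\mu$. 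Thus the conjunction of (\ref{4.1}) and (\ref{4.2}) for $Y=X$ is equivalent to ``$t=\mu$ together with (\ref{4.3})'', which gives the ``only if'' direction and, read backwards, the ``if'' direction as well: if $t=\mu$ and (\ref{4.3}) hold, then (\ref{4.1}) is literally (\ref{4.3}) and (\ref{4.2}) follows by adding $(t-\mu)e=0$ to (\ref{4.3}). By Lemma~\ref{l-4.1} this is exactly the statement that $Dih(n,X,X)$ is a DSRG with the listed parameters, noting that $|Y|=|X|$ so the valency is $2|X|$.

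There is essentially no obstacle here: every step is a substitution or a comparison of the coefficient of $e$ on the two sides of a group-ring identity. The only point that deserves an explicit sentence is that one \emph{forces} $t=\mu$ rather than merely \emph{assuming} it --- this is the content of equating the $e$-coefficients of (\ref{4.1}) and (\ref{4.2}) --- so in the write-up I would make sure that comparison is stated clearly.
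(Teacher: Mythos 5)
Your proposal is correct and is exactly the route the paper intends: the paper states Lemma~\ref{l-4.2} with no separate proof, as the immediate specialization $Y=X$ of Lemma~\ref{l-4.1}, where $\Delta_2$ vanishes, condition (\ref{4.1}) becomes (\ref{4.3}), and comparing it with condition (\ref{4.2}) forces $(t-\mu)e=0$. Your explicit remark that $t=\mu$ is \emph{derived} by equating the two right-hand sides, rather than assumed, is the one detail the paper leaves implicit and is worth keeping.
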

We now define
\[\mathbf{r}(z)=(\mathcal{F}\Delta_X)(z)=\sum_{i\in X}\zeta_n^{iz}\;\text{and}\;\mathbf{t}(z)=(\mathcal{F}\Delta_Y)(z)=\sum_{i\in Y}\zeta_n^{iz}.\]
Then $\mathbf{r}(z)+\overline{\mathbf{r}}(z)=(\mathcal{F}\Delta_{U_X})(z)$.\;The following lemma  gives a characterization of the dihedrant $Dih(n,X,Y)$ to be directed strongly regular by using $\mathbf{r}(z)$ and $\mathbf{t}(z)$.\;
\begin{lem}\label{l-4.3}The dihedrant $Dih(n,X,Y)$ is a DSRG with parameters $(2n,|X|+|Y|, \mu, \lambda, t)$ if and only if $\mathbf{r}$ and $\mathbf{t}$ satisfy the following conditions:
\begin{flalign}
(i)\;&\mathbf{t}(\mathbf{r}+\overline{\mathbf{r}})=\mu n\Delta_0+(\lambda-\mu)\mathbf{t};\hspace{270pt}\label{4.4}\\
(ii)\;&\mathbf{r}^2+|\mathbf{t}|^2=t-\mu+\mu n\Delta_0+(\lambda-\mu)\mathbf{r}.\label{4.5}
\end{flalign}
\end{lem}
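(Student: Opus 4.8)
The plan is to read off Lemma~\ref{l-4.3} from Lemma~\ref{l-4.1} by applying the whole family of characters $\{\chi_z:z\in\mathbb{Z}_n\}$ of $C_n$ to the group-ring identities (\ref{4.1}) and (\ref{4.2}), and then using the fact that these characters separate the elements of $\mathbb{C}[C_n]$. Recall that each $\chi_z$ extends to a ring homomorphism $\mathbb{C}[C_n]\to\mathbb{C}$ with $\chi_z(\overline{A})=(\mathcal{F}\Delta_A)(z)$ by (\ref{2.5}), that $\chi_z(\overline{C_n})=(\mathcal{F}\Delta_{\mathbb{Z}_n})(z)=n\Delta_0(z)$ by (\ref{2.6}), that $\chi_z(e)=1$, and that $\chi_z(\overline{x^{-Y}})=(\mathcal{F}\Delta_{-Y})(z)=\overline{(\mathcal{F}\Delta_Y)(z)}=\overline{\mathbf{t}(z)}$ by (\ref{2.4}). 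In particular $\chi_z(\Delta_1)=\chi_z(\overline{x^{U_X}})=(\mathcal{F}\Delta_{U_X})(z)=\mathbf{r}(z)+\overline{\mathbf{r}(z)}$.

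First I would apply $\chi_z$ to (\ref{4.1}): the left-hand side becomes $\chi_z(\overline{x^Y})\,\chi_z(\Delta_1)=\mathbf{t}(z)\bigl(\mathbf{r}(z)+\overline{\mathbf{r}(z)}\bigr)$ and the right-hand side becomes $(\lambda-\mu)\mathbf{t}(z)+\mu n\Delta_0(z)$; letting $z$ range over $\mathbb{Z}_n$ yields exactly the functional identity (\ref{4.4}). Next I would apply $\chi_z$ to the middle expression of (\ref{4.2}), namely $\overline{x^X}^2+\overline{x^Y}\,\overline{x^{-Y}}=(t-\mu)e+(\lambda-\mu)\overline{x^X}+\mu\overline{C_n}$: by multiplicativity of $\chi_z$ the left-hand side becomes $\mathbf{r}(z)^2+\mathbf{t}(z)\overline{\mathbf{t}(z)}=\mathbf{r}(z)^2+|\mathbf{t}(z)|^2$, and the right-hand side becomes $(t-\mu)+(\lambda-\mu)\mathbf{r}(z)+\mu n\Delta_0(z)$; again letting $z$ vary gives (\ref{4.5}).

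For the converse direction I would invoke that the Fourier transform $\mathcal{F}$ is invertible by the inversion formula (\ref{2.3}), equivalently that $\overline{A}\mapsto(\chi_z(\overline{A}))_{z\in\mathbb{Z}_n}$ is an injective $\mathbb{C}$-linear map $\mathbb{C}[C_n]\to\mathbb{C}^{\mathbb{Z}_n}$. Hence if (\ref{4.4}) and (\ref{4.5}) hold for every $z\in\mathbb{Z}_n$, the two sides of (\ref{4.1}) and of (\ref{4.2}) have identical Fourier transforms and therefore coincide as elements of $\mathbb{C}[C_n]$. Lemma~\ref{l-4.1} then gives that $Dih(n,X,Y)$ is a DSRG with parameters $(2n,|X|+|Y|,\mu,\lambda,t)$, and this equivalence is precisely Lemma~\ref{l-4.3}.

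This argument is in essence a dictionary translation, so I do not expect a genuine obstacle; the steps that need care are the conjugation bookkeeping --- namely $\mathcal{F}\Delta_{-Y}=\overline{\mathcal{F}\Delta_Y}$, so that $\overline{x^Y}\,\overline{x^{-Y}}$ evaluates under $\chi_z$ to $|\mathbf{t}(z)|^2$ and not $\mathbf{t}(z)^2$ --- together with the observation that it is the \emph{simultaneous} validity of the transformed identities for all $z$ that is equivalent to the original group-ring identities (so the universal quantifier over $z$ must not be dropped), and keeping track of the term $\mu n\Delta_0$ arising from $\overline{C_n}$, which is supported only at $z=0$.
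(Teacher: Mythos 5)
Your proposal is correct and is essentially the paper's own argument: the paper rewrites (\ref{4.1}) and (\ref{4.2}) as convolution identities on the multiplicity functions and applies the Fourier transformation, using that $\mathcal{F}$ is an algebra isomorphism from $(\mathbb{F}^{\mathbb{Z}_n},\ast)$ to $(\mathbb{F}^{\mathbb{Z}_n},\cdot)$, which is exactly your character-evaluation argument (via $\chi_z(\overline{A})=(\mathcal{F}\Delta_A)(z)$) together with the injectivity of $\mathcal{F}$ for the converse. Your bookkeeping of the conjugate term $\mathcal{F}\Delta_{-Y}=\overline{\mathcal{F}\Delta_Y}$ and of $\mu n\Delta_0$ matches the paper.
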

\begin{proof}The equations (\ref{4.1}) and (\ref{4.2}) given in Lemma \ref{l-4.1} are equivalent to
\[(\Delta_Y\ast\Delta_{X\uplus
(-X)})(i)=(\lambda-\mu)\Delta_Y(i)+\mu\Delta_{\mathbb{Z}_n}(i),\]
and $(\Delta_X\ast\Delta_{X})^2(i)+(\Delta_Y\ast\Delta_{-Y})(i)=(t-\mu)\Delta_0(i)+(\lambda-\mu)\Delta_X(i)+\mu\Delta_{\mathbb{Z}_n}(i)$ for any $i\in\mathbb{Z}_n$.\;Then by applying the Fourier transformation on these two equations,\;we can obtain that these two equations are equivalent to
\[\mathbf{t}(\mathbf{r}+\overline{\mathbf{r}})=\mu n\Delta_0+(\lambda-\mu)\mathbf{t},\;\;\mathbf{r}^2+|\mathbf{t}|^2=t-\mu+\mu n\Delta_0+(\lambda-\mu)\mathbf{r}.\]
Then the results follows.\;
\end{proof}
When $Y=X$,\;we have the following results.\;
\begin{lem}\label{l-4.4}The dihedrant $Dih(n,X,X)$ is a DSRG with parameters $(2n,|X|+|Y|, \mu, \lambda, t)$ if and only if $t=\mu$ and the function $\mathbf{r}$  satisfies
\begin{equation}\label{4.6}
\mathbf{r}(\mathbf{r}+\overline{\mathbf{r}})=\mu n\Delta_0+(\lambda-\mu)\mathbf{r}.
\end{equation}
\end{lem}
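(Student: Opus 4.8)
The plan is to obtain Lemma~\ref{l-4.4} as the specialization of Lemma~\ref{l-4.3} to the case $Y=X$. When $Y=X$ we have $\mathbf{t}=\mathbf{r}$, so conditions (\ref{4.4}) and (\ref{4.5}) read
\[\mathbf{r}(\mathbf{r}+\overline{\mathbf{r}})=\mu n\Delta_0+(\lambda-\mu)\mathbf{r}\]
and
\[\mathbf{r}^2+|\mathbf{r}|^2=(t-\mu)+\mu n\Delta_0+(\lambda-\mu)\mathbf{r},\]
respectively. Since $|\mathbf{r}|^2=\mathbf{r}\,\overline{\mathbf{r}}$, the left-hand side of the second equation equals $\mathbf{r}^2+\mathbf{r}\,\overline{\mathbf{r}}=\mathbf{r}(\mathbf{r}+\overline{\mathbf{r}})$, i.e.\ it coincides with the left-hand side of the first.

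For the ``only if'' direction I would argue that if $Dih(n,X,X)$ is a DSRG, then by Lemma~\ref{l-4.3} both displayed identities hold; subtracting the first from the second (as functions on $\mathbb{Z}_n$, e.g.\ by evaluating at $z=0$) leaves $0=t-\mu$, so $t=\mu$, and the surviving identity is exactly (\ref{4.6}). Conversely, assume $t=\mu$ and that (\ref{4.6}) holds. Then (\ref{4.4}) is literally (\ref{4.6}); and since $t-\mu=0$, the second displayed equation is $\mathbf{r}(\mathbf{r}+\overline{\mathbf{r}})=\mu n\Delta_0+(\lambda-\mu)\mathbf{r}$, which again is (\ref{4.6}), so (\ref{4.5}) holds as well. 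Lemma~\ref{l-4.3} then yields that $Dih(n,X,X)$ is a DSRG.

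Alternatively, one could run the argument directly from the group-ring characterization of Lemma~\ref{l-4.2}, applying the Fourier transformation exactly as in the proof of Lemma~\ref{l-4.3}; the content is identical. There is essentially no obstacle here — the only point that deserves to be spelled out is that $t=\mu$ is not an added hypothesis but is \emph{forced}, which is precisely the collapse of the discrepancy between the two sides of (\ref{4.5}) once $\mathbf{t}=\mathbf{r}$.
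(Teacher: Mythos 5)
Your proof is correct and matches the paper's (implicit) argument: the paper states Lemma~\ref{l-4.4} without proof as the specialization of Lemma~\ref{l-4.3} to $Y=X$, and your observation that $\mathbf{t}=\mathbf{r}$ makes the left-hand sides of (\ref{4.4}) and (\ref{4.5}) coincide, forcing $t=\mu$, is exactly the intended content. Nothing is missing.
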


 Recall in Section 3,\;$\mathbf{q}\overset{\text{def}}=\mathbf{r}+\overline{\mathbf{r}}=\mathcal{F}\Delta_{U_X}$.\;The next result will be needed quite often,\;as it gives a simple description of $X$ if $Dih(p^\alpha,X,X)$ is a directed strongly regular dihedrant.
\begin{lem}\label{l-4.5}Let $Dih(p^\alpha,X,X)$ be a directed strongly regular dihedrant with parameters $(2p^\alpha,2|X|, \mu, \lambda, t)$.\\Then,\;\\
$(a)$\;The function $\mathbf{q}$ satisfies
\begin{equation}\label{4.3.1}
\mathbf{q}(z)=(\mathbf{r}+\overline{\mathbf{r}})(z)=(\mathcal{F}\Delta_{X\uplus(-X)})(z)\in\{0,\lambda-\mu\}
\end{equation}
for any $0\neq z\in \mathbb{Z}_{p^\alpha}$.\;\\
$(b)$\;There are some integers $r_1,r_2,\cdots,r_s$ with $\beta+1\leqslant r_1<r_2<\cdots<r_s\leqslant\alpha$ satisfying
\begin{equation}\label{4.3.2}
\begin{aligned}
U_X=(O_{r_1}\cup O_{r_2}\cup\cdots\cup O_{r_s})\uplus\left(\mathbb{Z}_{p^\alpha}\setminus p^{\alpha-\beta}\mathbb{Z}_{p^\alpha}\right),
\end{aligned}
\end{equation}
for some $\beta$ with $0\leqslant\beta\leqslant\alpha-1$.\\
$(c)$\;If $p$ is an odd prime,\;then
\begin{equation}\label{4.3.3}
\begin{aligned}
U_X=\mathbb{Z}_{p^\alpha}\setminus p^{\alpha-\beta}\mathbb{Z}_{p^\alpha}.
\end{aligned}
\end{equation}
$(d)$\;If $p=2$ and $X\cap(-X)\neq\emptyset$,\;then
\begin{equation}\label{4.3.4}
\begin{aligned}
U_X=O_{\beta+1}\uplus\left(\mathbb{Z}_{2^\alpha}\setminus 2^{\alpha-\beta}\mathbb{Z}_{2^\alpha}\right).
\end{aligned}
\end{equation}
\end{lem}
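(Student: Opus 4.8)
The plan is to reduce the whole statement to the machinery already established in Section~3. First I would invoke Lemma~\ref{l-4.4}: since $Dih(p^\alpha,X,X)$ is a genuine DSRG with parameters $(2p^\alpha,2|X|,\mu,\lambda,t)$, it follows that $t=\mu$ and that
\[\mathbf{r}(\mathbf{r}+\overline{\mathbf{r}})=\mu p^\alpha\Delta_0+(\lambda-\mu)\mathbf{r}.\]
Evaluating this identity at an arbitrary $z\neq0$ gives $\mathbf{r}(z)\big(\mathbf{q}(z)-(\lambda-\mu)\big)=0$, so that either $\mathbf{r}(z)=0$, whence $\mathbf{q}(z)=\mathbf{r}(z)+\overline{\mathbf{r}(z)}=0$, or $\mathbf{q}(z)=\lambda-\mu$; this is precisely part~$(a)$. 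Since moreover $0<t=\mu<k$, the Remark following Proposition~\ref{p-2.1} gives $\lambda-\mu<0$, so $m:=\mu-\lambda$ is a positive integer and $\mathbf{q}(z)\in\{0,-m\}$ for every $z\neq0$.

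The next step is to observe that $X$ satisfies condition~(\ref{assumption}): indeed $0\notin X$ because the dihedrant is loopless, $X\neq-X$ by genuineness (as recorded among the properties listed in the Introduction), and $(\mathcal{F}\Delta_{X\uplus(-X)})(z)=\mathbf{r}(z)+\overline{\mathbf{r}(z)}=\mathbf{q}(z)\in\{0,-m\}$ with $m>0$ by the previous paragraph. Consequently the results of Section~3 become available for $X$, and parts~$(b)$--$(d)$ then follow essentially by citation. Part~$(b)$ is Lemma~\ref{l-3.2} with $\{r_1,\dots,r_s\}=\mathcal{I}_1$ and $\beta=\min\{\nu_p(z)\mid z\in\Gamma\}$: the identity~(\ref{3.5}) is exactly~(\ref{4.3.2}), and $\mathcal{I}_1\subseteq\{\beta+1,\dots,\alpha\}$ gives $\beta+1\leqslant r_1<\cdots<r_s\leqslant\alpha$. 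For the range of $\beta$ I would note that $\Delta_{U_X}(0)=-m|\Gamma|+|U_X|=0$ forces $|U_X|=m|\Gamma|$; since $X\neq\emptyset$ this makes $\Gamma\neq\emptyset$, so $\beta\geqslant0$, while $\mathbb{Z}_{p^\alpha}\setminus U_X=p^{\alpha-\beta}\mathbb{Z}_{p^\alpha}$ being a proper subgroup forces $\beta\leqslant\alpha-1$. Part~$(c)$ is Lemma~\ref{l-3.3}. For part~$(d)$, if $p=2$ and $z\in X\cap(-X)$ then $z,-z\in X$, hence $\Delta_{U_X}(z)=\Delta_X(z)+\Delta_X(-z)=2$; by the multiplicity pattern displayed in~(\ref{3.4}) this forces $z\in O_{r_i}$ for some $r_i\in\mathcal{I}_1$, so $\mathcal{I}_1\neq\emptyset$, and then Lemma~\ref{l-3.4} gives~(\ref{4.3.4}).

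The only step involving any real computation is the one-line pointwise evaluation in the first paragraph; everything after it is bookkeeping plus appeals to Section~3. The main thing to watch is that all the hypotheses of the Section~3 lemmas genuinely hold here — above all that $m=\mu-\lambda>0$, which rests on the Remark after Proposition~\ref{p-2.1}, and that $X\neq-X$ — and, in part~$(d)$, correctly translating the hypothesis $X\cap(-X)\neq\emptyset$ into $\mathcal{I}_1\neq\emptyset$ via the elements of $U_X$ occurring with multiplicity~$2$.
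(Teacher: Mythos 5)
Your proposal is correct and follows essentially the same route as the paper: apply Lemma \ref{l-4.4}, deduce the value set of $\mathbf{q}$ at $z\neq 0$, use the Remark after Proposition \ref{p-2.1} to get $\mu-\lambda>0$ so that condition (\ref{assumption}) holds, and then cite Lemmas \ref{l-3.2}, \ref{l-3.3} and \ref{l-3.4} for parts $(b)$--$(d)$. The only cosmetic difference is in part $(a)$, where the paper adds the conjugated identity to obtain the quadratic $\mathbf{q}^2=(\lambda-\mu)\mathbf{q}$ off $0$ while you factor $\mathbf{r}(z)\bigl(\mathbf{q}(z)-(\lambda-\mu)\bigr)=0$ pointwise; you also usefully make explicit two details the paper leaves implicit, namely the bounds on $\beta$ and the translation of $X\cap(-X)\neq\emptyset$ into $\mathcal{I}_1\neq\emptyset$ via the multiplicity-two elements of $U_X$.
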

\begin{proof}
We can get
\begin{equation}\label{4.11}
\overline{\mathbf{r}}(\mathbf{r}+\overline{\mathbf{r}})=\mu p^\alpha\Delta_0+(\lambda-\mu)\overline{\mathbf{r}}
\end{equation}
by taking conjugate on (\ref{4.6}).\;Then the equations (\ref{4.6}) and (\ref{4.11}) gives
\[(\mathbf{r}+\overline{\mathbf{r}})^2=2\mu p^\alpha\Delta_0+(\lambda-\mu)(\mathbf{r}+\overline{\mathbf{r}}),\]
and this implies
\begin{equation*}
(\mathbf{r}+\overline{\mathbf{r}})(z)=\left\{
  \begin{array}{ll}
    2|X|, & \hbox{$z=0$;} \\
    0\text{\;or\;}\lambda-\mu, & \hbox{$z\neq0$;} \\
    \end{array}
\right.
\end{equation*}
Then assertions $(b)$,\;$(c)$ and $(d)$\; follow from Remark 2.2,\;Lemmas \ref{l-3.2},\;\ref{l-3.3} and \ref{l-3.4} respectively.\;
\end{proof}

\section{Some constructions of directed strongly regular dihedrants}
We now give some constructions of directed strongly regular dihedrants $Dih(n;X,Y)$ with $X=Y$ and $X\subset Y$.\;In the following constructions,\;$v$ is a positive divisor of $n$ and $l=\frac{n}{v}$.\;

The directed strongly regular dihedrant $Dih(n,X,X)$  constructed in the following satisfies $X\cap(-X)=\emptyset$.
\begin{con}\label{c-5.1}
Let $v$ be an odd positive divisor of $n$.\;Let $H$ be a subset of $\{1,\cdots,v-1\}\subseteq \mathbb{Z}_n$,\;and $X$ be a subset of $\mathbb{Z}_n$ satisfying the following conditions:\\
$(i)$\;$X=H+v\mathbb{Z}_{n}$.\\
$(ii)$\;$X\cup(-X)=\mathbb{Z}_{n}\setminus v\mathbb{Z}_{n}$.\\
Then $Dih(n,X,X)$ is a DSRG with parameters $\left(2n,n-l,\frac{n-l}{2},\frac{n-l}{2}-l,\frac{n-l}{2}\right)$.\;
\end{con}
\begin{proof}Note that $\overline{x^X}\Delta_1=-l\overline{x^Y}+\frac{n-l}{2}\overline{C_n}$.\;The result follows from Lemma \ref{l-specialdihedrantcharacterization} directly.\;
\end{proof}

The directed strongly regular dihedrant $Dih(n,X,X)$ constructed in the following satisfies  $X\cap(-X)\neq\emptyset$.
\begin{con}\label{c-5.2}Let $v>2$ be an even positive divisor of $n$.\;Let $H$ be a subset of $\{1,\cdots,{v}-1\}\subseteq \mathbb{Z}_n$,\;and $X$ be a subset of $\mathbb{Z}_n$ satisfying the following conditions:\\
$(i)$\;$X=H+v\mathbb{Z}_{n}$.\\
$(ii)$\;$X\cup(-X)=(\mathbb{Z}_{n}\setminus v\mathbb{Z}_{n})\uplus(\frac{v}{2}+v\mathbb{Z}_{n})$.\\
$(iii)$\;$X\cup(\frac{v}{2}+X)=\mathbb{Z}_{n}$.\\
Then $Dih(n,X,X)$ is a DSRG with parameters $\left(2n,n,\frac{n}{2}+l,\frac{n}{2}-l,\frac{n}{2}+l\right)$.\;
\end{con}

\begin{proof}Note that $|X|=\frac{n}{2}$ and $\Delta_1=\overline{C_n}-\overline{x^{v\mathbb{Z}_{n}}}+\overline{x^{\frac{v}{2}+v\mathbb{Z}_{n}}}$.\;Thus $\overline{x^X}\Delta_1=-l\overline{x^X}+\frac{n}{2}\overline{C_n}+\overline{x^X}\;\overline{x^{\frac{v}{2}+v\mathbb{Z}_{n}}}=-l\overline{x^X}+
\frac{n}{2}\overline{C_n}+l\overline{x^{\frac{v}{2}+X}}=-l\overline{x^X}+\frac{n}{2}\overline{C_n}+l\overline{C_n}-l\overline{x^X}=(\frac{n}{2}+l)\overline{C_n}-2l\overline{x^X}$.\;The result follows from Lemma \ref{l-specialdihedrantcharacterization} directly.\;
\end{proof}
\begin{rmk}For $v=2$,\;the dihedrant $Dih(n,X,X)$ is not a genuine DSRG which has parameters $\left(2n,n,n,0,n\right)$,\;we don't consider this case in this paper.
\end{rmk}

We now give some constructions of directed strongly regular dihedrants $Dih(n;X,Y)$ with  $X\subset Y$.\;
\begin{con}\label{c-5.3}
Let $v$ be an odd positive divisor of $n$.\;Let $H$ be a subset of $ \{0,1,\cdots,v-1\}\subseteq \mathbb{Z}_n$ with $0\in H$,\;and $X,Y\subseteq\mathbb{Z}_n$ satisfying the following conditions:\\
$(i)$\;$Y=H+v\mathbb{Z}_{n}=X\cup \{0\}$.\\
$(ii)$\;$Y\cup(-Y)=\mathbb{Z}_{n}\uplus v\mathbb{Z}_{n}$.\\
Then $Dih(n;X,Y)$ is a DSRG with parameters $(2n,n+l-1,\frac{n+l}{2},\frac{n+3l}{2}-2,\frac{n+3l}{2}-1)$.
\end{con}
\begin{proof}We have $|Y|=|X|+1=\frac{n+l}{2}$,\;$\Delta_1=\overline{C_n}+\overline{x^{v\mathbb{Z}_{n}}}-2e$ and
$\Delta_2=\Delta_1+e=\overline{C_n}+\overline{x^{v\mathbb{Z}_{n}}}-e$.\;Thus $\overline{x^Y}\Delta_1=(l-2)\overline{x^Y}+\frac{n+l}{2}\overline{C_n}$ and $\overline{x^X}\Delta_1+\Delta_2=(l-1)e+(l-2)\overline{x^X}+\frac{n+l}{2}\overline{C_n}$.\;The result follows from Lemma \ref{l-generaldihedrantcharacterization} directly.\;
\end{proof}
\begin{con}\label{c-5.4}Let $v$ be an odd positive divisor of $n$.\;Let $H$ be a subset of $ \{0,1,\cdots,v-1\}\subseteq \mathbb{Z}_n$ with $0\in H$,\;and  $X,Y\subseteq\mathbb{Z}_n$ satisfying the following conditions:\\
$(i)$\;$Y=H+v\mathbb{Z}_{n}=X\cup v\mathbb{Z}_{n}$.\\
$(ii)$\;$Y\cup(-Y)=\mathbb{Z}_{n}\uplus v\mathbb{Z}_{n}$.\\
Then $Dih(n;X,Y)$ is a DSRG with parameters $\left(2n,n,\frac{n+l}{2},\frac{n-l}{2},\frac{n+l}{2}\right)$.\;
\end{con}
\begin{proof}We have $|Y|=|X|+l=\frac{n+l}{2}$,\;$\Delta_1=\overline{C_n}-\overline{x^{v\mathbb{Z}_{n}}}$ and
$\Delta_2=\overline{x^{v\mathbb{Z}_{n}}}\Delta_1+l\overline{x^{v\mathbb{Z}_{n}}}=l\overline{C_n}$.\;Thus $\overline{x^Y}\Delta_1=-l\overline{x^Y}+\frac{n+l}{2}\overline{C_n}$ and $\overline{x^X}\Delta_1+\Delta_2=-l\overline{x^X}+\frac{n+l}{2}\overline{C_n}$.\;The result follows from Lemma \ref{l-generaldihedrantcharacterization} directly.\;
\end{proof}

\begin{rmk}
The Lemma 2.4 in \cite{MI} asserts that the dihedrants $Dih(n;X,Y)$ and $Dih(n;bX,b'+bY)$ are isomorphic for any $b\in \mathbb{Z}_n^{\ast}$,\;$b'\in\mathbb{Z}_n$.\;Hence,\;indeed,\;we construct the directed strongly regular dihedrants $Dih(n;bX,b'+bY)$ for any $b\in \mathbb{Z}_n^{\ast}$,\;$b'\in\mathbb{Z}_n$.\;
\end{rmk}

\section{The characterization of directed strongly regular dihedrant $Dih(p^\alpha,X,X)$}

We now can give a characterization of directed strongly regular dihedrant $Dih(p^\alpha,X,X)$ with $p>2$.\;
\begin{thm}\label{t-6.1}For an odd prime $p$ and a positive integer $\alpha$,\;then the dihedrant $Dih(p^\alpha,X,X)$ is a DSRG if and only if
$X=\psi_{\gamma}^{-1}(H)$ for some $1\leqslant\gamma\leqslant\alpha$ and a subset $H\subseteq \mathbb{Z}_{p^\gamma}$ satisfying the following conditions:\\
(i)\;$H\cup (-H)=\mathbb{Z}_{p^\gamma}\setminus\{0\}$.\\
(ii)\;$H\cap (-H)=\emptyset$.
\end{thm}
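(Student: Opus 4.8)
The plan is to prove the two implications separately: the ``if'' direction is a direct appeal to Construction~\ref{c-5.1}, and the ``only if'' direction rests on Lemmas~\ref{l-4.4}, \ref{l-4.5} and~\ref{l-3.7}. For sufficiency, assume $X=\psi_\gamma^{-1}(H)$ where $H\subseteq\mathbb{Z}_{p^\gamma}$ satisfies $H\cup(-H)=\mathbb{Z}_{p^\gamma}\setminus\{0\}$ and $H\cap(-H)=\emptyset$; in particular $0\notin H$. Put $n=p^\alpha$ and $v=p^\gamma$, an odd divisor of $n$, and let $H'\subseteq\{1,\dots,v-1\}$ be the set of least positive residues of the elements of $H$, so that $X=H'+v\mathbb{Z}_n$. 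Since $\psi_\gamma$ is surjective with $\ker\psi_\gamma=v\mathbb{Z}_n$, forming $\psi_\gamma$-preimages commutes with union, intersection and negation; hence $X\cup(-X)=\psi_\gamma^{-1}\!\big(H\cup(-H)\big)=\mathbb{Z}_n\setminus v\mathbb{Z}_n$ and $X\cap(-X)=\psi_\gamma^{-1}(\emptyset)=\emptyset$. Thus all hypotheses of Construction~\ref{c-5.1} are met, and $Dih(p^\alpha,X,X)$ is a DSRG.

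For necessity, suppose $Dih(p^\alpha,X,X)$ is a DSRG with parameters $(2p^\alpha,2|X|,\mu,\lambda,t)$. By Lemma~\ref{l-4.4}, $t=\mu$ and $\mathbf{r}$ satisfies $(\ref{4.6})$; by Lemma~\ref{l-4.5}(a) and~(c), $\mathbf{q}(z)\in\{0,\lambda-\mu\}$ for all $z\neq 0$ and, $p$ being odd, $U_X=X\uplus(-X)=\mathbb{Z}_{p^\alpha}\setminus p^{\alpha-\beta}\mathbb{Z}_{p^\alpha}$ for some $0\leqslant\beta\leqslant\alpha-1$. Set $\gamma=\alpha-\beta\in\{1,\dots,\alpha\}$. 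Because this multiset has all multiplicities equal to $1$ and negation fixes only $0$ in $\mathbb{Z}_{p^\alpha}$ (while $0\notin X$), it follows that $X\cap(-X)=\emptyset$ and $X\cup(-X)=\mathbb{Z}_{p^\alpha}\setminus p^{\gamma}\mathbb{Z}_{p^\alpha}$ as ordinary sets. Using $(\ref{2.6})$ I would compute, for $z\neq 0$, $\mathbf{q}(z)=(\mathcal{F}\Delta_{\mathbb{Z}_{p^\alpha}})(z)-(\mathcal{F}\Delta_{p^{\alpha-\beta}\mathbb{Z}_{p^\alpha}})(z)$, which equals $-p^{\beta}$ when $\nu_p(z)\geqslant\beta$ and $0$ when $\nu_p(z)<\beta$. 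Since $\mathbf{q}$ attains $-p^{\beta}$ (e.g.\ at $z=p^{\alpha-1}$), comparison with $\mathbf{q}(z)\in\{0,\lambda-\mu\}$ forces $\lambda-\mu=-p^{\beta}\neq 0$. Now for any $z$ with $\nu_p(z)<\beta$ one has $z\neq 0$, $\mathbf{q}(z)=0$ and $\Delta_0(z)=0$, so evaluating $(\ref{4.6})$ at such $z$ gives $0=(\lambda-\mu)\mathbf{r}(z)$ and hence $\mathbf{r}(z)=(\mathcal{F}\Delta_X)(z)=0$ for every $z\notin p^{\beta}\mathbb{Z}_{p^\alpha}$ (a vacuous statement when $\beta=0$, in which case $p^{\gamma}\mathbb{Z}_{p^\alpha}=\{0\}$ and the conclusion below is trivial). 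Lemma~\ref{l-3.7} with $a=\beta$ now gives that $X$ is a union of cosets of $p^{\alpha-\beta}\mathbb{Z}_{p^\alpha}=p^{\gamma}\mathbb{Z}_{p^\alpha}$, i.e.\ $X=\psi_\gamma^{-1}(H)$ with $H:=\psi_\gamma(X)\subseteq\mathbb{Z}_{p^\gamma}$. Finally, applying $\psi_\gamma$ to $X\cup(-X)=\mathbb{Z}_{p^\alpha}\setminus p^{\gamma}\mathbb{Z}_{p^\alpha}$ yields $H\cup(-H)=\mathbb{Z}_{p^\gamma}\setminus\{0\}$, and $X\cap(-X)=\emptyset$ together with $X,-X$ being unions of cosets of $\ker\psi_\gamma$ yields $H\cap(-H)=\emptyset$; these are exactly conditions~(i) and~(ii).

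I expect the main obstacle to be the middle of the necessity argument. One must first use Lemma~\ref{l-4.5}(c) to identify $U_X$ \emph{exactly}, so that the correct modulus $p^{\gamma}$ appears; then extract $\lambda-\mu=-p^{\beta}$ from the Fourier transform of $U_X$; and only then feed the vanishing $\mathbf{q}(z)=0$ back into $(\ref{4.6})$ to force $(\mathcal{F}\Delta_X)(z)=0$ off the subgroup $p^{\beta}\mathbb{Z}_{p^\alpha}$. The delicate point is that the two exponents $\beta$ and $\gamma=\alpha-\beta$ play genuinely different roles---$\beta$ records where the Fourier coefficients of $X$ vanish, while $\gamma$ records the size of the cosets out of which $X$ is built---and Lemma~\ref{l-3.7} is precisely the bridge carrying the first fact to the second. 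The remaining work (matching the parameters of Construction~\ref{c-5.1} and verifying the set-theoretic identities between $X$, $H$ and their negatives) is routine.
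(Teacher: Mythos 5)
Your proposal is correct and follows essentially the same route as the paper: Construction~\ref{c-5.1} for sufficiency, and for necessity Lemma~\ref{l-4.5}(c) to pin down $U_X=\mathbb{Z}_{p^\alpha}\setminus p^{\alpha-\beta}\mathbb{Z}_{p^\alpha}$, then equation~(\ref{4.6}) to force $\mathbf{r}(z)=0$ off $p^{\beta}\mathbb{Z}_{p^\alpha}$, and Lemma~\ref{l-3.7} to conclude $X=\psi_\gamma^{-1}(H)$. Your version is in fact slightly more careful than the paper's in making explicit that $\lambda-\mu=-p^{\beta}\neq 0$ (needed to cancel it in $(\ref{4.6})$) and in handling the degenerate case $\beta=0$.
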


\begin{proof}It follows from Construction \ref{c-5.1} that the dihedrant $Dih(p^\alpha,X,X)$ with conditions $(i)$ and $(ii)$ is a DSRG\;(where $n=p^\alpha$ and $v=p^{\alpha-\beta}$).\;Conversely,\;suppose that the Cayley digraph $Dih(p^\alpha,X,X)$ is a DSRG with parameters $(2n,k=2|X|,\mu, \lambda, t)$.\;From Lemma \ref{l-4.5} $(c)$,\;we can get
\[X\uplus(-X)=\mathbb{Z}_{p^\alpha}\setminus p^{\alpha-\beta}\mathbb{Z}_{p^\alpha},\]
for some $0\leqslant\beta\leqslant\alpha$,\;proving $(ii)$.\;In this case,\;
\begin{equation*}
\begin{aligned}
\mathbf{r}(z)+\overline{\mathbf{r}}(z)=p^\alpha\Delta_0(z)-p^{\beta}\Delta_{p^{\beta}\mathbb{Z}_{p^\alpha}}(z)
\end{aligned}
\end{equation*}
and hence equation (\ref{4.6}) becomes
\begin{equation*}
\begin{aligned}
p^\alpha\Delta_0(z)\mathbf{r}(z)-p^{\beta}\Delta_{p^{\beta}\mathbb{Z}_{p^\alpha}}(z)\mathbf{r}(z)=\mu n\Delta_0(z)+(\lambda-\mu)\mathbf{r}(z).\;
\end{aligned}
\end{equation*}
This implies that
\[\mathbf{r}(z)=0,\;\forall z\not\in p^{\beta}\mathbb{Z}_{p^\alpha}.\]
From lemma \ref{l-3.7},\;there is a multiset $H\subseteq \mathbb{Z}_{p^{\alpha-\beta}}$ such that
$$X=\psi_{\alpha-\beta}^{-1}(H).\;$$
Let $\gamma=\alpha-\beta$,\;then $X\uplus(-X)=\mathbb{Z}_{p^\alpha}\setminus p^{\alpha-\beta}\mathbb{Z}_{p^\alpha}$ implies $(i)$ and $(ii)$.\;
\end{proof}

We now focus on directed strongly regular dihedrant $Dih(2^\alpha,X,X)$.\;We now prove the non-existence of directed strongly regular dihedrant $Dih(2^\alpha,X,X)$ with $X\cap(-X)=\emptyset$ first.

\begin{lem}\label{l-6.2}A DSRG cannot be a dihedrant $Dih(2^\alpha,X,X)$ with $X\cap(-X)=\emptyset$.\;
\end{lem}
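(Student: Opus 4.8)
The plan is to derive a contradiction by supposing $Dih(2^\alpha,X,X)$ is a DSRG with $X\cap(-X)=\emptyset$. Since the case $p=2$, $\alpha=1$ forces $X=\{1\}=-X$, we may assume $\alpha\geqslant 2$. By Lemma~\ref{l-4.5}$(a)$ the function $\mathbf{q}=\mathbf{r}+\overline{\mathbf{r}}=\mathcal{F}\Delta_{U_X}$ takes only the values $2|X|$ (at $0$) and $0$ or $\lambda-\mu$ (elsewhere), so $X$ satisfies the condition $(\ref{assumption})$ with $m=\mu-\lambda$. Because $X\cap(-X)=\emptyset$ we have $\mathcal{I}_1=\emptyset$ in the notation of Lemma~\ref{l-3.1}, so Lemma~\ref{l-3.2} gives
\[
U_X=X\uplus(-X)=\mathbb{Z}_{2^\alpha}\setminus 2^{\alpha-\beta}\mathbb{Z}_{2^\alpha}
\]
for some $0\leqslant\beta\leqslant\alpha-1$, and $\mathbf{q}(z)=2^\alpha\Delta_0(z)-2^\beta\Delta_{2^{\beta}\mathbb{Z}_{2^\alpha}}(z)$, i.e. $\lambda-\mu=-2^\beta$.

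Next I would substitute this expression for $\mathbf{q}$ into the fundamental relation $(\ref{4.6})$, exactly as in the proof of Theorem~\ref{t-6.1}: the equation $\mathbf{r}\,\mathbf{q}=\mu\,2^\alpha\Delta_0+(\lambda-\mu)\mathbf{r}$ becomes
\[
2^\alpha\Delta_0(z)\mathbf{r}(z)-2^{\beta}\Delta_{2^{\beta}\mathbb{Z}_{2^\alpha}}(z)\mathbf{r}(z)=\mu\,2^\alpha\Delta_0(z)-2^\beta\mathbf{r}(z),
\]
which forces $\mathbf{r}(z)=0$ for all $z\notin 2^{\beta}\mathbb{Z}_{2^\alpha}$. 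By Lemma~\ref{l-3.7} there is a subset $H\subseteq\mathbb{Z}_{2^{\alpha-\beta}}$ with $X=\psi_{\alpha-\beta}^{-1}(H)$; writing $\gamma=\alpha-\beta$, the identity $U_X=\mathbb{Z}_{2^\alpha}\setminus 2^{\alpha-\beta}\mathbb{Z}_{2^\alpha}$ then translates, via $(\ref{identity})$, into $H\cup(-H)=\mathbb{Z}_{2^\gamma}\setminus\{0\}$ and $H\cap(-H)=\emptyset$. So the problem reduces to showing no subset $H$ of $\mathbb{Z}_{2^\gamma}$ with $\gamma\geqslant 1$ can satisfy both conditions.

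The main obstacle — really the whole content of the lemma — is this last reduction. Here the point is a simple parity/order obstruction in the $2$-group $\mathbb{Z}_{2^\gamma}$: the element $2^{\gamma-1}$ is its own negative, so $2^{\gamma-1}\in H$ would violate $H\cap(-H)=\emptyset$, while $2^{\gamma-1}\notin H$ (and likewise $2^{\gamma-1}\notin -H$, since $-2^{\gamma-1}=2^{\gamma-1}$) would violate $H\cup(-H)=\mathbb{Z}_{2^\gamma}\setminus\{0\}$ because $2^{\gamma-1}\neq 0$. Either way we reach a contradiction, so no such $X$ exists and the lemma follows. (Equivalently, one can argue directly on $\mathbb{Z}_{2^\alpha}$: the condition $X\cap(-X)=\emptyset$ together with $X\uplus(-X)=\mathbb{Z}_{2^\alpha}\setminus 2^{\alpha-\beta}\mathbb{Z}_{2^\alpha}$ would require the involution $2^{\alpha-1}$ of $\mathbb{Z}_{2^\alpha}$ to lie in exactly one of $X$, $-X$, which is impossible since $2^{\alpha-1}=-2^{\alpha-1}$ and $2^{\alpha-1}\notin 2^{\alpha-\beta}\mathbb{Z}_{2^\alpha}$ for $\beta\geqslant 1$, while $\beta=0$ gives $U_X=\emptyset$ contradicting $X\neq\emptyset$.) I expect the bookkeeping with $\psi$ and $\phi$ to be routine; the essential idea is just the presence of the order-$2$ element.
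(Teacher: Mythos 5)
Your main argument is correct and follows the same route as the paper: reduce via Lemma \ref{l-4.5} to $U_X=\mathbb{Z}_{2^\alpha}\setminus 2^{\alpha-\beta}\mathbb{Z}_{2^\alpha}$, substitute into (\ref{4.6}) to get $\mathbf{r}(z)=0$ off $2^\beta\mathbb{Z}_{2^\alpha}$, invoke Lemma \ref{l-3.7} to write $X=\psi_{\alpha-\beta}^{-1}(H)$, and then derive a contradiction. The only divergence is the final step: the paper counts cardinalities, getting $2|X|=2|H|\cdot 2^\beta=2^\alpha-2^\beta$ and hence $2|H|=2^{\alpha-\beta}-1$, an even number equal to an odd one; you instead observe that the involution $2^{\gamma-1}$ of $\mathbb{Z}_{2^\gamma}$ cannot lie in exactly one of $H$, $-H$. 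These are two faces of the same parity obstruction and both are valid, so your proof goes through.

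One genuine error to fix, though it sits only in your parenthetical ``equivalently, one can argue directly on $\mathbb{Z}_{2^\alpha}$'' aside: the claim that $2^{\alpha-1}\notin 2^{\alpha-\beta}\mathbb{Z}_{2^\alpha}$ for $\beta\geqslant 1$ is false. The subgroup $2^{\alpha-\beta}\mathbb{Z}_{2^\alpha}$ consists of the multiples of $2^{\alpha-\beta}$, and $2^{\alpha-1}=2^{\beta-1}\cdot 2^{\alpha-\beta}$ is such a multiple whenever $\beta\geqslant 1$; indeed $2^{\alpha-1}$ lies in \emph{every} nontrivial subgroup of $\mathbb{Z}_{2^\alpha}$. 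So for $\beta\geqslant 1$ the involution of $\mathbb{Z}_{2^\alpha}$ is precisely one of the elements removed from $U_X$, and no contradiction arises at that level; the direct argument only disposes of the case $\beta=0$. This is exactly why the passage to the quotient $\mathbb{Z}_{2^\gamma}$ (where the involution $2^{\gamma-1}$ survives in $\mathbb{Z}_{2^\gamma}\setminus\{0\}$) is not a dispensable piece of bookkeeping but the step that makes the obstruction visible. Delete or correct the parenthetical; the rest stands.
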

\begin{proof}Suppose $X\cap(-X)=\emptyset$.\;Then From Lemma \ref{l-4.5} $(b)$,\;we also have
\[X\uplus(-X)=O_{\beta+1}\cup O_{\beta+2}\cup\ldots\cup O_{\alpha}=\mathbb{Z}_{2^\alpha}\setminus 2^{\alpha-\beta}\mathbb{Z}_{2^\alpha},\]
Similar to the proof of Lemma \ref{t-6.1},\;there is a multiset $H\subseteq \mathbb{Z}_{2^{\alpha-\beta}}$ such that
$X=\psi_{\alpha-\beta}^{-1}(H)$.\;This gives that $2|X|=2|H|2^{\beta}=2^\alpha-2^{\beta}$ and hence $2|H|=2^{\alpha-\beta}-1$,\;which is impossible.\;
\end{proof}

\begin{thm}\label{t-6.3}For a positive integer $\alpha$,\;the dihedrant $Dih(2^\alpha,X,X)$ is a DSRG if and only if
$X=\psi_{\gamma}^{-1}(H)$ for some $2\leqslant\gamma\leqslant\alpha$ and a subset $H\subseteq \mathbb{Z}_{p^\gamma}$ satisfying the following conditions:\\
(i)\;$H\uplus (-H)=(\mathbb{Z}_{2^\gamma}\setminus\{0\})\uplus\{2^{\gamma-1}\}$.\\
(ii)\;$H\cap (2^{\gamma-1}+H)=\mathbb{Z}_{2^\gamma}$.
\end{thm}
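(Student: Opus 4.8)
The plan is to establish both implications. For the ``if'' direction I would invoke Construction~\ref{c-5.2} after matching the hypotheses; for the ``only if'' direction I would combine Lemma~\ref{l-6.2}, Lemma~\ref{l-4.5}, Lemma~\ref{l-4.4} and Lemma~\ref{l-3.7} with an explicit computation of $\mathcal{F}\Delta_{U_X}$. For \emph{sufficiency}, suppose $X=\psi_\gamma^{-1}(H)$ with $2\leqslant\gamma\leqslant\alpha$ and $H$ satisfying $(i)$ and $(ii)$; write $v=2^\gamma$, $n=2^\alpha$ and $l=n/v=2^{\alpha-\gamma}$. Condition $(i)$ forces $0\notin H$ and $2|H|=|\mathbb{Z}_{2^\gamma}|=2^\gamma$, so $|H|=2^{\gamma-1}$, and identifying $H$ with a subset of $\{1,\dots,v-1\}$ we get $X=H+v\mathbb{Z}_n$. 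Since $\psi_\gamma^{-1}$ is compatible with $\uplus$, negation and translation and $\psi_\gamma^{-1}(\{2^{\gamma-1}\})=\tfrac v2+v\mathbb{Z}_n$, condition $(i)$ becomes $X\cup(-X)=(\mathbb{Z}_n\setminus v\mathbb{Z}_n)\uplus(\tfrac v2+v\mathbb{Z}_n)$, and $(ii)$ together with $|H|=2^{\gamma-1}$ gives $H\cup(2^{\gamma-1}+H)=\mathbb{Z}_{2^\gamma}$, hence $X\cup(\tfrac v2+X)=\mathbb{Z}_n$. As $v=2^\gamma>2$, Construction~\ref{c-5.2} applies and shows $Dih(2^\alpha,X,X)$ is a DSRG with the asserted parameters.

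For \emph{necessity}, assume $Dih(2^\alpha,X,X)$ is a genuine DSRG with parameters $(2^{\alpha+1},2|X|,\mu,\lambda,t)$. By Lemma~\ref{l-4.4}, $t=\mu$ and $(\ref{4.6})$ holds; by Lemma~\ref{l-6.2}, $X\cap(-X)\neq\emptyset$, so Lemma~\ref{l-4.5}$(d)$ gives $U_X=O_{\beta+1}\uplus(\mathbb{Z}_{2^\alpha}\setminus 2^{\alpha-\beta}\mathbb{Z}_{2^\alpha})$ for some $0\leqslant\beta\leqslant\alpha-1$. Put $\gamma=\alpha-\beta$. If $\gamma=1$ then $U_X$ is the multiset of odd residues each with multiplicity $2$, which forces $\Delta_X(z)=1$ for odd $z$ and $\Delta_X(z)=0$ for even $z$, so $X=-X$ and the graph is undirected, contradicting genuineness; thus $\gamma\geqslant2$. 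Using $O_{\beta+1}=2^{\gamma-1}\mathbb{Z}_{2^\alpha}\setminus 2^{\gamma}\mathbb{Z}_{2^\alpha}$ (from $(\ref{3.1})$) and $(\ref{2.6})$ one computes
\[
\mathbf{q}=\mathcal{F}\Delta_{U_X}=2^{\beta+1}\Delta_{2^{\beta+1}\mathbb{Z}_{2^\alpha}}-2^{\beta+1}\Delta_{2^{\beta}\mathbb{Z}_{2^\alpha}}+2^\alpha\Delta_0 ,
\]
so $\mathbf{q}(0)=2^\alpha$ (hence $|X|=2^{\alpha-1}$), $\mathbf{q}(z)=-2^{\beta+1}$ when $\nu_2(z)=\beta$, and $\mathbf{q}(z)=0$ otherwise for $z\neq0$. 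Comparing with Lemma~\ref{l-4.5}$(a)$ gives $\lambda-\mu=-2^{\beta+1}$, and then $(\ref{4.6})$ at $z\neq0$ reads $\mathbf{r}(z)\bigl(\mathbf{q}(z)-(\lambda-\mu)\bigr)=0$, so $\mathbf{r}(z)=0$ whenever $\mathbf{q}(z)=0$; in particular $\mathbf{r}(z)=0$ for all $z$ with $\nu_2(z)<\beta$, and Lemma~\ref{l-3.7} (with $a=\beta$) yields a set $H\subseteq\mathbb{Z}_{2^\gamma}$ with $X=\psi_\gamma^{-1}(H)$. Since $O_{\beta+1}=\psi_\gamma^{-1}(\{2^{\gamma-1}\})$ and $\mathbb{Z}_{2^\alpha}\setminus 2^{\alpha-\beta}\mathbb{Z}_{2^\alpha}=\psi_\gamma^{-1}(\mathbb{Z}_{2^\gamma}\setminus\{0\})$, pushing the shape of $U_X$ through $\psi_\gamma$ gives $H\uplus(-H)=(\mathbb{Z}_{2^\gamma}\setminus\{0\})\uplus\{2^{\gamma-1}\}$, proving $(i)$.

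To deduce $(ii)$, note that besides $\nu_2(z)<\beta$, equation $(\ref{4.6})$ also forces $\mathbf{r}(z)=0$ for every $z\in 2^{\beta+1}\mathbb{Z}_{2^\alpha}\setminus\{0\}$. Consider $W=X\uplus(2^{\gamma-1}+X)$ in $\mathbb{Z}_{2^\alpha}$, where $2^{\gamma-1}$ is read as an element of $\mathbb{Z}_{2^\alpha}$; then $\mathcal{F}\Delta_W(z)=\mathbf{r}(z)\bigl(1+\zeta_{2^\alpha}^{2^{\gamma-1}z}\bigr)$. If $z\neq0$ and $\mathbf{r}(z)\neq0$ then $\nu_2(z)=\beta$, so $\nu_2(2^{\gamma-1}z)=\alpha-1$ and $\zeta_{2^\alpha}^{2^{\gamma-1}z}=-1$, giving $\mathcal{F}\Delta_W(z)=0$; for all other $z\neq0$ we already have $\mathbf{r}(z)=0$. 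Hence $\mathcal{F}\Delta_W$ is supported only at $z=0$, where it equals $2|X|=2^\alpha$, so by the inversion formula $(\ref{2.3})$ the function $\Delta_W$ is identically $1$, i.e. $\Delta_X(u)+\Delta_X(u-2^{\gamma-1})=1$ for all $u\in\mathbb{Z}_{2^\alpha}$. This yields $X\cap(2^{\gamma-1}+X)=\emptyset$, and applying $\psi_\gamma$ gives $H\cap(2^{\gamma-1}+H)=\emptyset$, which is $(ii)$.

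The step I expect to be the main obstacle is the deduction of $(ii)$: the extra vanishing of $\mathbf{r}$ on $2^{\beta+1}\mathbb{Z}_{2^\alpha}\setminus\{0\}$ comes for free from $(\ref{4.6})$ but is invisible at the level of the multiset $U_X$, and recognising that it is precisely what encodes the translation condition on $H$ — and packaging it cleanly, either via $W$ as above or equivalently via the deflation identity $(\ref{identity})$ applied to $H$ in $\mathbb{Z}_{2^\gamma}$ — is the crux of the argument; the computation of $\mathbf{q}$, the $X\leftrightarrow H$ translation, and ruling out $\gamma=1$ should all be routine.
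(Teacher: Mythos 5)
Your proof is correct, and its skeleton coincides with the paper's: sufficiency via Construction~\ref{c-5.2}, and necessity via Lemma~\ref{l-6.2} (to force $X\cap(-X)\neq\emptyset$), Lemma~\ref{l-4.5}$(d)$ (to pin down $U_X$), the explicit computation of $\mathbf{q}$ giving $\lambda-\mu=-2^{\beta+1}$, and Lemma~\ref{l-3.7} (to get $X=\psi_\gamma^{-1}(H)$). You diverge from the paper in three places, all legitimately. First, you exclude $\gamma=1$ by observing that it would force $X=-X$, whereas the paper gets $\beta\leqslant\alpha-2$ from $\mu=2^{\alpha-1}+2^{\beta}<k$; both work. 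Second, for condition $(i)$ you simply push the already-known multiset identity for $U_X$ down through $\psi_\gamma$, using $O_{\beta+1}=\psi_\gamma^{-1}(\{2^{\gamma-1}\})$; the paper instead deflates equation (\ref{4.6}) to $\mathbb{Z}_{2^\gamma}$ via (\ref{identity}), re-applies Lemma~\ref{l-3.4} to $H$ to get the shape of $H\uplus(-H)$ with an unknown parameter $\kappa$, and then computes to force $\kappa=0$. Your route is shorter and avoids the second invocation of Lemma~\ref{l-3.4}. Third, for condition $(ii)$ you work upstairs in $\mathbb{Z}_{2^\alpha}$ with the auxiliary multiset $W=X\uplus(2^{\gamma-1}+X)$ and Fourier inversion, while the paper reads the same relation $\overline{y^{H}}+\overline{y^{2^{\gamma-1}+H}}=\overline{y^{\mathbb{Z}_{2^\gamma}}}$ off the deflated group-ring equation; these are the same computation in different clothing, and your key observation --- that $\mathbf{r}$ vanishes off $\{0\}\cup\mathcal{O}_{2^{\alpha-\beta}}$ and that $1+\zeta_{2^\alpha}^{2^{\gamma-1}z}=0$ exactly when $\nu_2(z)=\beta$ --- is exactly what makes it go through. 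One remark on the statement itself: you prove $H\cap(2^{\gamma-1}+H)=\emptyset$ (equivalently $H\cup(2^{\gamma-1}+H)=\mathbb{Z}_{2^\gamma}$, since $|H|=2^{\gamma-1}$), which matches Theorem~\ref{t-1.5} and Construction~\ref{c-5.2}; the literal condition ``$H\cap(2^{\gamma-1}+H)=\mathbb{Z}_{2^\gamma}$'' printed in the theorem is a typo, and your reading is the intended one.
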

\begin{proof}It follows from Construction \ref{c-5.2} that the dihedrant $Dih(2^\alpha,X,X)$ with the conditions $(i)$ and $(ii)$ is a DSRG.\;Conversely,\;suppose that the dihedrant  $Dih(2^\alpha,X,X)$ is a DSRG with parameters $(2n,k=2|X|,\mu, \lambda, t)$,\;then $X\cap(-X)\neq\emptyset$.\;Then from Lemma \ref{l-4.5} $(d)$ and equation (\ref{2.7}),\;we can get
\begin{equation*}
\begin{aligned}
\mathbf{q}(z)&=(\mathbf{r}+\overline{\mathbf{r}})(z)(\mathcal{F}\Delta_{O_{\beta+1}})(z)+\sum_{i=\beta+1}(\mathcal{F}\Delta_{O_i})(z)\\
&=(\mathcal{F}\Delta_{O_{\beta+1}})(z)+(\mathcal{F}\Delta_{\mathbb{Z}_{2^\alpha}})(z)-(\mathcal{F}\Delta_{\mathbb{Z}_{2^{\alpha-\beta}2^\alpha}})(z)\\
&=\mu\left(\frac{2^{\beta+1}}{(2^{\beta+1},z)}\right)\frac{\varphi(2^{\beta+1})}{\varphi\left(\frac{2^{\beta+1}}{(2^{\beta+1},z)}\right)}+2^\alpha\Delta_0(z)-2^\beta\Delta_{2^\beta\mathbb{Z}_{2^\alpha}}(z)\in\{0,\lambda-\mu\},\\
\end{aligned}
\end{equation*}
for some $0\leqslant\beta\leqslant\alpha-1$,\;and hence
\begin{equation*}
k=\mathbf{q}(0)=2^\alpha,
\mathbf{q}(2^{\beta})=\mu\left(\frac{2^{\beta+1}}{(2^{\beta+1},2^{\beta})}\right)\frac{\varphi(2^{\beta+1})}{\varphi\left(\frac{2^{\beta+1}}{(2^{\beta+1},2^{\beta})}\right)}-2^\beta=-2^{\beta+1}=\lambda-\mu.
\end{equation*}
So from equation (\ref{2.1}),\;we can get $\mu=2^{\alpha-1}+2^\beta$.\;Note that $\mu<k$ and hence $\beta\leqslant\alpha-2$.\;Thus from Lemma \ref{l-4.2},\;equation (\ref{4.6}) becomes
\begin{equation}\label{6.1}
\mathbf{r}(\mathcal{F}\Delta_{O_{\beta+1}}+2^\alpha\Delta_0-2^\beta\Delta_{2^\beta\mathbb{Z}_{2^\alpha}})=\mu {2^\alpha}\Delta_0-2^{\beta+1}\mathbf{r}.
\end{equation}
Note that $(\mathcal{F}\Delta_{O_{\beta+1}})(z)=\mu\left(\frac{2^{\beta+1}}{(2^{\beta+1},z)}\right)\frac{\varphi(2^{\beta+1})}{\varphi\left(\frac{2^{\beta+1}}{(2^{\beta+1},2^{z})}\right)}=0$ for $z\not\in 2^{\beta}\mathbb{Z}_{2^\alpha}$.\;Then the above equation implies $\mathbf{r}(z)=(\mathcal{F}\Delta_X)(z)=0$ for $z\not\in 2^{\beta}\mathbb{Z}_{2^\alpha}$.\;Thus from Lemma \ref{l-3.7},\;there is a multiset $H\subseteq \mathbb{Z}_{2^{\alpha-\beta}}$ such that
$X=\psi_{\alpha-\beta}^{-1}(H)$.\;%Hence,\;$\mathbf{r}(z)=(\mathcal{F}\Delta_X)(z)=2^{\beta}\Delta_{2^{\beta}\mathbb{Z}_{2^\alpha}}(z)(\mathcal{F}\Delta_{H})$ and becomes
%\begin{equation*}
%2^{2\beta}\Delta_{2^{\beta}\mathbb{Z}_{2^\alpha}}\mathcal{F}\Delta_{H}(z)(\mathcal{F}\Delta_{H}(z)+\overline{\mathcal{F}\Delta_{H}(z)})=(2^\beta+2^{\alpha-1}) {2^\alpha}\Delta_0(z)-2^{2\beta+1}\Delta_{2^{\beta}\mathbb{Z}_{2^\alpha}}(z)\mathcal{F}\Delta_{H}(z).
%\end{equation*}
So the (\ref{identity}) and equation (\ref{6.1}) implies that,\;for each $z\in \mathbb{Z}_{2^{\alpha-\beta}}$,\;
\begin{equation}\label{6.3}
\begin{aligned}
(\mathcal{F}{\Delta}_{H})(z)(\mathcal{F}{\Delta}_{H}(z)+\overline{\mathcal{F}{\Delta}_{H}(z)})=2^{\alpha-\beta}(1+2^{\alpha-\beta-1}) {\Delta}_0(z)-2(\mathcal{F}{\Delta}_{H}(z))
\end{aligned}
\end{equation}
Let $\gamma=\alpha-\beta$,\;then $\mathcal{F}{\Delta}_{H}(z)+\overline{\mathcal{F}{\Delta}_{H}(z)}\in\{0,-2\}$ for all $0\neq z\in \mathbb{Z}_{2^{\alpha-\beta}}$.\;Hence,\;Lemma \ref{l-3.4} gives that
\begin{equation}\label{5.10}
\begin{aligned}
H\uplus(-H)=O'_{\kappa+1}\cup\left(\mathbb{Z}_{2^\gamma}\setminus 2^{\gamma-\kappa}\mathbb{Z}_{2^\gamma}\right)
\end{aligned}
\end{equation}
for some $0\leqslant \kappa\leqslant \gamma-1$.\;Thus we can get
\begin{equation*}
\begin{aligned}
\mathcal{F}{\Delta}_{H\uplus(-H)}(2^\kappa)=(\mathcal{F}\widetilde{\Delta}_{O'_{\kappa+1}})+2^{\gamma}{\Delta}_0(2^\kappa)
-2^\kappa{\Delta}_{2^\kappa\mathbb{Z}_{2^\gamma}}(2^\kappa)=-2^{\kappa+1}\in\{0,-2\},
\end{aligned}
\end{equation*}
which implies $\kappa=0$.\;So $H\uplus(-H)=(\mathbb{Z}_{2^{\gamma}}\uplus O'_1)\setminus\{0\}=(\mathbb{Z}_{2^{\gamma}}\uplus \{2^{\gamma-1}\})\setminus\{0\}$,\;proving $(i)$.\;Note that $|H|=2^{\gamma-1}$ and (\ref{6.3}) implies that
\[\overline{y^H}(\overline{y^{\mathbb{Z}_{2^\gamma}}}-e+y^{2^{\gamma-1}})=(1+2^{\gamma-1})\overline{y^{\mathbb{Z}_{2^\gamma}}}-2\overline{y^H},\]
which gives that $\overline{y^H}+\overline{y^{2^{\gamma-1}+H}}=\overline{y^{\mathbb{Z}_{2^\gamma}}}$,\;proving $(ii)$.\;
\end{proof}

%Let $\mathbf{d}=\mathcal{F}\widetilde{\Delta}_{H'}$,\;then from (\ref{6.3}),\;we can get $(\mathbf{d}+\overline{\mathbf{d}})^2=2^{\alpha-\beta+1}(1+2^{\alpha-\beta-1}) \widetilde{\Delta}_0-2(\mathbf{d}+\overline{\mathbf{d}})$ and hence.\;Let $O'_0,O'_1,\ldots,O'_{{\alpha-\beta}}$ be the $\mathbb{Z}^\ast_{2^{\alpha-\beta}}$-orbits in $\mathbb{Z}_{2^{\alpha-\beta}}$,\;then
%from,\;we can get

%Note that $\ker\psi_{2^{\alpha-\beta}}={2^{\alpha-\beta}}\mathbb{Z}_{p^\alpha}$.\;Thus
%\[H\uplus(-H)=\{i+x_i:1\leqslant i\leqslant2^{\alpha-\beta}-1\}\uplus\{2^{\alpha-\beta-1}+y\}\]
%for some $x_1,x_2,\ldots,x_{2^{\alpha-\beta}-1},y\in {2^{\alpha-\beta}}\mathbb{Z}_{p^\alpha}$ and then
%\[X\cup(-X)=(H+{2^{\alpha-\beta}}\mathbb{Z}_{p^\alpha})\uplus(-H+{2^{\alpha-\beta}}\mathbb{Z}_{p^\alpha})=(\mathbb{Z}_{2^{\alpha}}\setminus2^{\alpha-\beta}\mathbb{Z}_{2^{\alpha}})\uplus (2^{\alpha-\beta-1}+2^{\alpha-\beta}\mathbb{Z}_{2^{\alpha}}).\]
%proving $(ii)$.\;Then from Lemma \ref{4.2},\;equation \ref{4.3} becomes
%\[\overline{x^X}(\overline{x^{\mathbb{Z}_{2^\alpha}}}-\overline{x^{2^{\alpha-\beta}\mathbb{Z}_{2^\alpha}}}+\overline{x^{2^{\alpha-\beta-1}+2^{\alpha-\beta}\mathbb{Z}_{2^{\alpha}}}})=-2^{\beta+1}\overline{x^X}+(2^{\alpha-1}+2^\beta)\overline{x^{\mathbb{Z}_{2^\alpha}}},\]
%then
%\[\overline{x^{\mathbb{Z}_{2^\alpha}}}=\overline{x^X}+\overline{x^{2^{\alpha-\beta-1}+X}}.\]
%This gives $X\cup(2^{\alpha-\beta-1}+X)=\mathbb{Z}_{2^{\alpha}}$,\;proving $(iii)$.\;

\section{Characterization of directed strongly regular dihedrant $Dih(p^\alpha,X,Y)$ with $X\subset Y$}
Throughout this section,\; $p$ is an odd prime.\;Let $\mathbf{w}=\mathbf{r}-\mathbf{t}$,\;\;then we have the following lemma.\;Recall that
$O_0,O_1,\ldots,O_\alpha$ are $\mathbb{Z}_{p^\alpha}^\ast$-orbits in $\mathbb{Z}_{p^\alpha}$.\;
\begin{lem}\label{l-7.1}Suppose the dihedrant $Dih(p^\alpha,X,Y)$ is a DSRG with $X\subset Y$.\;Then ${\rm{Im}}{(\mathbf{w})}\subseteq\mathbb{R}$ if and only if $Y\setminus X$ is a union of some $\mathbb{Z}_{p^\alpha}^\ast$-orbits.\;Forthermore,\;let $Dih(p^\alpha,X,Y)$ be a DSRG with $X\subset Y$ and ${\rm{Im}}{(\mathbf{w})}\subseteq\mathbb{R}$,\;then ${\rm{Im}}{(\mathbf{w})}\subseteq\{\rho,\sigma\}$ and $Y\setminus X=O_{r_1}\cup O_{r_1}\cup\ldots\cup O_{r_s}$ for some $0=r_1<r_2<\ldots<r_s\leqslant \alpha$.\;
\end{lem}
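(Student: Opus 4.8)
The plan is to study the single function $\mathbf{w}=\mathbf{r}-\mathbf{t}$. Since $X\subset Y$ we have $\Delta_Y=\Delta_X+\Delta_{Y\setminus X}$ with $\Delta_{Y\setminus X}$ taking values in $\{0,1\}$, so $\mathbf{w}=-\mathcal{F}\Delta_{Y\setminus X}$. For the ``if'' direction, if $Y\setminus X$ is a union of $\mathbb{Z}_{p^\alpha}^{\ast}$-orbits then $\Delta_{Y\setminus X}=\sum_{r\mid p^\alpha}\alpha_r\Delta_{\mathcal{O}_r}$ with $\alpha_r\in\{0,1\}$, whence by $(\ref{2.7})$ (equivalently Lemma \ref{l-2.2}) $\mathcal{F}\Delta_{Y\setminus X}$ is $\mathbb{Z}$-valued and ${\rm Im}(\mathbf{w})\subseteq\mathbb{R}$. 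For everything else it suffices to prove the single claim: ${\rm Im}(\mathbf{w})\subseteq\mathbb{R}$ implies ${\rm Im}(\mathbf{w})\subseteq\{\rho,\sigma\}$. Indeed, granting this, and since $\rho,\sigma\in\mathbb{Z}$ by Proposition \ref{p-2.2}, $-\mathbf{w}=\mathcal{F}\Delta_{Y\setminus X}$ is $\mathbb{Q}$-valued, so Lemma \ref{l-2.2} forces $\Delta_{Y\setminus X}=\sum_{r\mid p^\alpha}\alpha_r\Delta_{\mathcal{O}_r}$ with $\alpha_r\in\mathbb{Q}$; as the orbits partition $\mathbb{Z}_{p^\alpha}$ and $\Delta_{Y\setminus X}$ is $\{0,1\}$-valued, each $\alpha_r\in\{0,1\}$, i.e. $Y\setminus X$ is a union of orbits, say $O_{r_1}\cup\cdots\cup O_{r_s}$ with $r_1<\cdots<r_s$; and since we may assume $0\in Y$ (by the isomorphism reduction recalled in the introduction) while $0\notin X$ (no loops), we get $0\in Y\setminus X$, hence $O_0\subseteq Y\setminus X$ and $r_1=0$. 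Thus the equivalence and the ``furthermore'' both follow.

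To prove the claim, write $S=x^X\cup x^Ya$ and recall $\overline{S}^2=te+\lambda\overline{S}+\mu(\overline{D_{p^\alpha}}-e-\overline{S})$ from Lemma \ref{l-2.1}. For $z\in\mathbb{Z}_{p^\alpha}\setminus\{0\}$ consider the $2$-dimensional representation of $D_{p^\alpha}$ given by $x\mapsto\mathrm{diag}(\zeta_{p^\alpha}^{z},\zeta_{p^\alpha}^{-z})$ and $a\mapsto\left(\begin{smallmatrix}0&1\\1&0\end{smallmatrix}\right)$; it sends $\overline{S}$ to $M_z:=\left(\begin{smallmatrix}\mathbf{r}(z)&\mathbf{t}(z)\\ \overline{\mathbf{t}(z)}&\overline{\mathbf{r}(z)}\end{smallmatrix}\right)$ (using $\mathbf{r}(-z)=\overline{\mathbf{r}(z)}$, $\mathbf{t}(-z)=\overline{\mathbf{t}(z)}$), and it kills $\overline{C_{p^\alpha}}$ and hence $\overline{D_{p^\alpha}}$. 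Applying it to the identity above yields $M_z^{2}=(t-\mu)I+(\lambda-\mu)M_z$, i.e. $(M_z-\rho I)(M_z-\sigma I)=0$, so every eigenvalue of $M_z$ lies in $\{\rho,\sigma\}$. (Alternatively the same matrix identity falls out of $(\ref{4.4})$ and $(\ref{4.5})$ of Lemma \ref{l-4.3} together with their complex conjugates.) Now $M_z\left(\begin{smallmatrix}1\\-1\end{smallmatrix}\right)=\left(\begin{smallmatrix}\mathbf{r}(z)-\mathbf{t}(z)\\ \overline{\mathbf{t}(z)}-\overline{\mathbf{r}(z)}\end{smallmatrix}\right)=\left(\begin{smallmatrix}\mathbf{w}(z)\\ -\overline{\mathbf{w}(z)}\end{smallmatrix}\right)$; when $\mathbf{w}(z)\in\mathbb{R}$ the right-hand side equals $\mathbf{w}(z)\left(\begin{smallmatrix}1\\-1\end{smallmatrix}\right)$, so $\mathbf{w}(z)$ is an eigenvalue of $M_z$ and therefore $\mathbf{w}(z)\in\{\rho,\sigma\}$.

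For $z=0$ we apply instead the sign representation $x\mapsto 1$, $a\mapsto -1$, which sends $\overline{S}$ to the scalar $|X|-|Y|=\mathbf{w}(0)$ and likewise kills $\overline{D_{p^\alpha}}$; the same identity then gives $\mathbf{w}(0)^{2}=(t-\mu)+(\lambda-\mu)\mathbf{w}(0)$, so $\mathbf{w}(0)\in\{\rho,\sigma\}$ as well. Hence ${\rm Im}(\mathbf{w})\subseteq\{\rho,\sigma\}$, which is the claim, and the proof is complete.

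The part I expect to need the most care is not conceptual but bookkeeping: checking that the displayed $2$-dimensional map really is a representation of $D_{p^\alpha}$ (i.e. the relation $ax=x^{-1}a$ is respected) and that it computes $\overline{S}$ to exactly $M_z$ with the conjugate entries in the correct corners, together with pinning down the degenerate case $X=Y$ and the role of the normalization $0\in Y$ so that the concluding assertion $r_1=0$ is legitimate. The heart of the argument — that $\left(\begin{smallmatrix}1\\-1\end{smallmatrix}\right)$ is automatically an eigenvector of $M_z$ with eigenvalue $\mathbf{w}(z)$ precisely when $\mathbf{w}(z)$ is real — is immediate once $M_z$ is written down.
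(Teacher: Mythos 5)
Your proof is correct and follows essentially the same route as the paper: show pointwise that $\mathbf{w}(z)$ satisfies $x^2=(t-\mu)+(\lambda-\mu)x$, conclude ${\rm Im}(\mathbf{w})\subseteq\{\rho,\sigma\}\subseteq\mathbb{Z}$, and then combine Lemma \ref{l-2.2} with the $\{0,1\}$-valuedness of $\Delta_{Y\setminus X}$ to get a union of $\mathbb{Z}_{p^\alpha}^{\ast}$-orbits, with $r_1=0$ coming from the normalization $0\in Y$, $0\notin X$ (which the paper uses implicitly in asserting $a_0=-1$). The only difference is cosmetic: you obtain the quadratic from the two-dimensional dihedral representation and the eigenvector $\left(\begin{smallmatrix}1\\-1\end{smallmatrix}\right)$ of $M_z$, whereas the paper subtracts (\ref{4.4}) from (\ref{4.5}) and uses the reality of $\mathbf{w}$ to identify $\mathbf{r}\mathbf{w}-\mathbf{t}\overline{\mathbf{w}}$ with $\mathbf{w}^2$ --- as you yourself note, these are the same identity.
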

\begin{proof}Let the dihedrant $Dih(p^\alpha,X,Y)$ be a DSRG with $X\subset Y$.\;If $Y\setminus X$ is a union of some $\mathbb{Z}_{p^\alpha}^\ast$-orbits,\;then ${\rm{Im}}{(\mathbf{w})}\subseteq\mathbb{R}$ clearly.\;If ${\rm{Im}}{(\mathbf{w})}\subseteq\mathbb{R}$,\;then from equations $(\ref{4.4})$ and $(\ref{4.5})$ in Lemma \ref{l-4.3},\;$\mathbf{w}$ satisfies
\[\mathbf{w}^2=(t-\mu)+(\lambda-\mu)\mathbf{w}.\]
Note that the two eigenvalues ${\rho,\sigma}$ of directed strongly regular dihedrant $Dih(p^\alpha,X,Y)$ are two roots of the quadratic equation $x^2=(t-\mu)+(\lambda-\mu)x$,\;so we can get ${\rm{Im}}{(\mathbf{w})}\in\{\rho,\sigma\}\subseteq \mathbb{Z}$.\;Thus,\;from Lemma \ref{l-2.2},\;we have $\Delta_X-\Delta_Y=\sum_{r=0}^\alpha\alpha_r\Delta_{O_r}$ for some $\alpha_r\in\{0,-1\}$ and $a_0=-1$.\;This implies that $Y\setminus X=O_{r_1}\cup O_{r_1}\cup\ldots\cup O_{r_s}$ for some
$0=r_1<r_2<\ldots<r_s\leqslant \alpha$.
\end{proof}
The following theorem characterize directed strongly regular dihedrant $Dih(p^\alpha,X,Y)$ with $X\subset Y$ and $Y\setminus X$ is a union of some $\mathbb{Z}_{p^\alpha}^\ast$-orbits.

\begin{thm}\label{t-7.2}The dihedrant $Dih(p^\alpha,X,Y)$ is a DSRG with $X\subset Y$ and $Y\setminus X$ is a union of some $\mathbb{Z}_{p^\alpha}^\ast$-orbits if and only if $Y=\psi_{\gamma}^{-1}(H)$,\;$X=Y\setminus\{0\}$ or $X=Y\setminus p^\gamma\mathbb{Z}_{p^\alpha}$
for some $1\leqslant\gamma\leqslant\alpha$ and a subset $H\subseteq \mathbb{Z}_{p^\gamma}$ such that $H\uplus(-H)=\mathbb{Z}_{p^\gamma}\uplus \{0\}$.\;
\end{thm}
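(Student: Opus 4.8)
The plan is to prove the "if" direction by direct verification using the constructions already in hand, and the "only if" direction by combining the structural information from Lemma~\ref{l-7.1} with the divisibility/coset machinery of Section~3. For the "if" direction, I would observe that the two claimed families are exactly the dihedrants produced by Construction~\ref{c-5.3} (the case $X=Y\setminus\{0\}$) and Construction~\ref{c-5.4} (the case $X=Y\setminus p^\gamma\mathbb{Z}_{p^\alpha}$), with $n=p^\alpha$ and $v=p^{\alpha-\gamma}$, once one checks that the hypothesis $H\uplus(-H)=\mathbb{Z}_{p^\gamma}\uplus\{0\}$ on $H$ translates into conditions $(ii)$ of those constructions (namely $Y\cup(-Y)=\mathbb{Z}_n\uplus v\mathbb{Z}_n$), and that $0\in H$ follows from the normalization $0\in Y$ together with $H\uplus(-H)\ni 0$. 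So the forward implication is a short bookkeeping argument reducing to Constructions~\ref{c-5.3} and~\ref{c-5.4}.

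For the "only if" direction, suppose $Dih(p^\alpha,X,Y)$ is a DSRG with $X\subset Y$ and $Y\setminus X$ a union of $\mathbb{Z}_{p^\alpha}^\ast$-orbits. First I would apply Lemma~\ref{l-7.1}: since $Y\setminus X$ is orbit-closed, $\mathrm{Im}(\mathbf{w})\subseteq\mathbb{R}$, hence $\mathbf{w}=\mathbf{r}-\mathbf{t}$ takes only the two eigenvalues $\rho,\sigma$ and $Y\setminus X=O_{r_1}\cup\cdots\cup O_{r_s}$ with $r_1=0$. Then I would exploit equation~$(\ref{4.4})$ of Lemma~\ref{l-4.3}, $\mathbf{t}(\mathbf{r}+\overline{\mathbf{r}})=\mu p^\alpha\Delta_0+(\lambda-\mu)\mathbf{t}$: the function $U_X=X\uplus(-X)$ is controlled by $\mathbf{q}=\mathbf{r}+\overline{\mathbf{r}}$, and since $X\subset Y$ and $Y\setminus X$ is symmetric one checks that $X$ itself satisfies condition~$(\ref{assumption})$ (after verifying $0\notin X$ and $X\neq -X$), so Lemmas~\ref{l-3.2} and~\ref{l-3.3} apply and force $U_X=\mathbb{Z}_{p^\alpha}\setminus p^{\alpha-\beta}\mathbb{Z}_{p^\alpha}$ for some $\beta$. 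Plugging this into~$(\ref{4.4})$ gives $\mathbf{t}(z)=0$ for all $z\notin p^{\beta}\mathbb{Z}_{p^\alpha}$ (the same computation as in the proof of Theorem~\ref{t-6.1}), so by Lemma~\ref{l-3.7} we get $Y=\psi_{\alpha-\beta}^{-1}(H)$ for a multiset $H$; setting $\gamma=\alpha-\beta$, the structure of $U_X$ together with $Y\setminus X$ being $\{0\}$ or $p^\gamma\mathbb{Z}_{p^\alpha}$ (the only orbit unions one can remove from a $\psi$-preimage while keeping $X\subset Y$ and $U_X$ as above) pins down the two cases, and translating $U_X=\mathbb{Z}_{p^\alpha}\setminus p^{\alpha-\beta}\mathbb{Z}_{p^\alpha}$ back through identity~$(\ref{identity})$ yields $H\uplus(-H)=\mathbb{Z}_{p^\gamma}\uplus\{0\}$.

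The main obstacle I expect is the bookkeeping at the interface between $X$ and $Y$: one must show that $Y\setminus X$, a priori any union of orbits $O_{r_1}\cup\cdots\cup O_{r_s}$ with $r_1=0$, is in fact forced to be just $O_0=\{0\}$ or all of $p^\gamma\mathbb{Z}_{p^\alpha}=O_0\cup O_1\cup\cdots\cup O_{\gamma}$ — i.e.\ that no "intermediate" orbit can be deleted. This should come from feeding the decomposition of $Y\setminus X$ into equation~$(\ref{4.5})$, $\mathbf{r}^2+|\mathbf{t}|^2=(t-\mu)+\mu p^\alpha\Delta_0+(\lambda-\mu)\mathbf{r}$, evaluating at characters of the various orders $p^{\alpha-j}$, and using Ramanujan-sum values~$(\ref{2.7})$ to rule out the remaining configurations by the sign/integrality constraints of Proposition~\ref{p-2.1}, exactly in the spirit of the case analysis in Lemma~\ref{l-3.3}. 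A secondary point requiring care is verifying that $X$ genuinely satisfies~$(\ref{assumption})$ rather than merely having $U_X$ of the expected shape; this needs the observation that $\mathbf{q}(z)=\mathbf{r}(z)+\overline{\mathbf{r}(z)}$ and that $(\ref{4.4})$–$(\ref{4.5})$ force $\mathbf{q}(z)\in\{0,\lambda-\mu\}$ with $\lambda-\mu<0$ (using the Remark after Proposition~\ref{p-2.1}, since here $t=\mu$ will again hold, or more precisely $t-\mu$ and $\lambda-\mu$ are the symmetric functions of $\rho,\sigma$).
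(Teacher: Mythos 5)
Your ``if'' direction matches the paper's (reduction to Constructions \ref{c-5.3} and \ref{c-5.4}; note only that the modulus there is $v=p^{\gamma}$, not $p^{\alpha-\gamma}$). The ``only if'' direction, however, has a genuine gap at its central step: the claim that $X$ satisfies condition (\ref{assumption}), so that Lemmas \ref{l-3.2} and \ref{l-3.3} force $U_X=\mathbb{Z}_{p^\alpha}\setminus p^{\alpha-\beta}\mathbb{Z}_{p^\alpha}$. This is false precisely in the case $X=Y\setminus\{0\}$ with $\gamma<\alpha$, which the theorem must allow. Take $p=3$, $\alpha=2$, $\gamma=1$, $H=\{0,1\}$: then $Y=\{0,1,3,4,6,7\}\subseteq\mathbb{Z}_9$, $X=Y\setminus\{0\}$, and $Dih(9,X,Y)$ is a DSRG with parameters $(18,11,6,7,8)$ by Construction \ref{c-5.3}. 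Here $U_X=X\uplus(-X)=(\mathbb{Z}_9\setminus\{0\})\uplus\{3,6\}$ is a proper multiset (multiplicity $2$ on $3\mathbb{Z}_9\setminus\{0\}$), and $\mathbf{q}=\mathcal{F}\Delta_{U_X}=9\Delta_0+3\Delta_{3\mathbb{Z}_9}-2$ takes the value $\mathbf{q}(3)=1>0$ as well as $\mathbf{q}(1)=-2$. So (\ref{assumption}) fails, $\lambda-\mu=1$ is positive and $t\neq\mu$ (contrary to your closing remark), and your subsequent deductions ($\mathbf{t}(z)=0$ off $p^{\beta}\mathbb{Z}_{p^\alpha}$, hence $Y=\psi^{-1}(H)$ by Lemma \ref{l-3.7}) do not go through. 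The underlying reason is that when $Y\neq X$, equation (\ref{4.4}) together with its conjugate only yields $(\mathbf{t}+\overline{\mathbf{t}})(z)\bigl(\mathbf{q}(z)-(\lambda-\mu)\bigr)=0$ for $z\neq0$; it does not pin $\mathbf{q}$ to two values the way the squaring trick does in Lemma \ref{l-4.5} when $Y=X$.

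The paper's route avoids $U_X$ entirely. It first uses Lemma \ref{l-7.1}: $\mathbf{w}=\mathbf{r}-\mathbf{t}$ takes at most the two eigenvalues $\rho,\sigma$, of which at most one is negative, and evaluating $\mathbf{w}$ at $p^{r_u}$ and $p^{r_s}$ rules out gaps among the deleted orbits, so $Y\setminus X=p^{\beta}\mathbb{Z}_{p^\alpha}$ --- this is how your ``main obstacle'' is actually resolved, rather than via (\ref{4.5}) and Proposition \ref{p-2.1}. Substituting $\mathbf{r}=\mathbf{t}-p^{\alpha-\beta}\Delta_{p^{\alpha-\beta}\mathbb{Z}_{p^\alpha}}$ into both (\ref{4.4}) and (\ref{4.5}) and subtracting then splits the analysis: for $\beta=\alpha$ (i.e.\ $X=Y\setminus\{0\}$) the conclusion is obtained by passing to the complement and invoking Theorem \ref{t-6.1}, and for $\beta<\alpha$ one deduces $t=\mu$ and $\mu-\lambda=p^{\alpha-\beta}$ and then controls $\mathbf{t}+\overline{\mathbf{t}}$ (not $\mathbf{q}$) to reach $Y=\psi_{\beta}^{-1}(H)$ and $H\uplus(-H)=\mathbb{Z}_{p^\beta}\uplus\{0\}$. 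As written, your argument would wrongly exclude the entire family $X=Y\setminus\{0\}$ with $\gamma<\alpha$, so this step needs to be restructured along those lines rather than patched.
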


\begin{proof}It follows from Construction \ref{c-5.3} that the dihedrant $Dih(p^\alpha,X,Y)$ with condition $(a)$ is a DSRG\;(where $n=p^\alpha$ and $v=p^\gamma$),\;and from Construction \ref{c-5.4} that the dihedrant $Dih(p^\alpha,X,Y)$  with condition $(b)$ is a DSRG\;(where $n=p^\alpha$ and $v=p^\beta$).\;

Now suppose the dihedrant $D_i(p^\alpha,X,Y)$ is a DSRG with $X\subset Y$ and $Y\setminus X$ is a union of some $\mathbb{Z}_{p^\alpha}^\ast$-orbits,\;from Lemma \ref{l-7.1},\;we have ${\rm{Im}}{(\mathbf{w})}\in\{\rho,\sigma\}$ and $Y\setminus X=O_{r_1}\cup O_{r_2}\cup\ldots\cup O_{r_s}$.\;
So $\mathbf{w}=-\sum\limits_{i=1}^s\mathcal{F}\Delta_{O_i}$.\;

We assert $\{0=r_1,r_2,\ldots,r_s\}=\{0,1,\ldots,s-1\}$.\;Assume $s>1$ since this assertion hold for $s=1$ trivially.\;In fact,\;if there is a integer $u$ such that $r_{u+1}>r_{u}+1$ for some $1\leqslant u\leqslant s-1$,\;then
\begin{equation*}
\begin{aligned}
\mathbf{w}(p^{r_u})&=-\sum_{i=1}^u\mu\left(\frac{p^{r_i}}{(p^{r_i},p^{r_u})}\right)\frac{\varphi(p^{r_i})}{\varphi\left(\frac{p^{r_i}}{(p^{r_i},p^{r_u})}\right)}=-\sum_{i=1}^u\varphi(p^{r_i})<0.
\end{aligned}
\end{equation*}
but
\begin{equation*}
\begin{aligned}
\mathbf{w}(p^{r_s})&=-\sum_{i=1}^s\mu\left(\frac{p^{r_i}}{(p^{r_i},p^{r_s})}\right)\frac{\varphi(p^{r_i})}{\varphi\left(\frac{p^{r_i}}{(p^{r_i},p^{r_s})}\right)}=-\sum_{i=1}^s\varphi(p^{r_i})<-\sum_{i=1}^u\varphi(p^{r_i})=\mathbf{w}(p^{r_u})<0,
\end{aligned}
\end{equation*}
a contradiction.\;This shows that $r_{u+1}=r_{u}+1$ for each $1\leqslant u\leqslant s-1$.\;Then
$$Y\setminus X=\bigcup_{i=0}^{s-1}O_i=p^\beta\mathbb{Z}_{p^\alpha},$$
where $\beta=\alpha-s+1$.\;Therefore $\mathbf{r}-\mathbf{t}=\mathbf{w}=-\mathcal{F}(\Delta_{p^\beta\mathbb{Z}_{p^\alpha}})=-p^{\alpha-\beta}\Delta_{p^{\alpha-\beta}\mathbb{Z}_{p^\alpha}}$ and so $\mathbf{r}=\mathbf{t}-p^{\alpha-\beta}\Delta_{p^{\alpha-\beta}\mathbb{Z}_{p^\alpha}}$.\;Then from Lemma \ref{l-4.3},\;equations (\ref{4.4}) and (\ref{4.5}) become
\begin{flalign}
&\mathbf{t}^2+|\mathbf{t}|^2-2p^{\alpha-\beta}\mathbf{t}\Delta_{p^{\alpha-\beta}\mathbb{Z}_{p^\alpha}}=\mu {p^\alpha}\Delta_0+(\lambda-\mu)\mathbf{t},\hspace{140pt}\label{7.1}\\
&(\mathbf{t}-p^{\alpha-\beta}\Delta_{p^{\alpha-\beta}\mathbb{Z}_{p^\alpha}})^2+|\mathbf{t}|^2=t-\mu+\mu {p^\alpha}\Delta_0+(\lambda-\mu)(\mathbf{t}-p^{\alpha-\beta}\Delta_{p^{\alpha-\beta}\mathbb{Z}_{p^\alpha}})\label{7.2}
,\end{flalign}
The difference of these two equations gives
\begin{equation}\label{7.3}
p^{2(\alpha-\beta)}\Delta_{p^{\alpha-\beta}\mathbb{Z}_{p^\alpha}}=t-\mu+(\mu-\lambda)p^{\alpha-\beta}\Delta_{p^{\alpha-\beta}\mathbb{Z}_{p^\alpha}}.
\end{equation}
{${\mathbf{Case\;1}}$}:\;$\beta=\alpha$.\;In this case,\;$Y=X\cup\{0\}$,\;hence $t-\lambda=1$.\;Since the dihedrant $Dih(p^\alpha,X,Y)$ is  a DSRG with parameters $(2n,|X|+|Y|, \mu, \lambda, t)$,\;its complement $Dih(p^\alpha,\mathbb{Z}_n\setminus X\setminus\{0\},\mathbb{Z}_n\setminus Y)$ is also a DSRG.\;Note that $\mathbb{Z}_n\setminus X\setminus\{0\}=\mathbb{Z}_n\setminus Y$ since $Y=X\cup\{0\}$.\;So from Theorem \ref{t-6.1},\;we can get $Y^c=\mathbb{Z}_{p^\alpha}\setminus Y=\psi_{\gamma}^{-1}(H')$ for some $1\leqslant\gamma\leqslant\alpha$ and a subset $H'\subseteq \mathbb{Z}_{p^\gamma}$ such that $H\uplus (-H)=\mathbb{Z}_{p^\gamma}\setminus\{0\}$.\;This implies that
$Y=\psi_{\gamma}^{-1}(H)$,\;where $0\in H=\mathbb{Z}_{p^\gamma}\setminus H'$,\;and $H\uplus(-H)=\mathbb{Z}_{p^\gamma}\uplus \{0\}$.\\
{${\mathbf{Case\;2}}$}:\;$0<\beta<\alpha$.\;In this case,\;we have $\mu-\lambda=p^{\alpha-\beta}$ and $t=\mu$.\;Thus (\ref{7.1}) becomes
\begin{equation}\label{7.4}
\mathbf{t}^2+|\mathbf{t}|^2=\mathbf{t}(\mathbf{t}+\overline{\mathbf{t}})=\mu {p^\alpha}\Delta_0+p^{\alpha-\beta}(2\Delta_{p^{\alpha-\beta}\mathbb{Z}_{p^\alpha}}-1)\mathbf{t}.
\end{equation}
Hence,\;we get
\[(\mathbf{t}+\overline{\mathbf{t}})^2=2\mu {p^\alpha}\Delta_0+p^{\alpha-\beta}(2\Delta_{p^{\alpha-\beta}\mathbb{Z}_{p^\alpha}}-1)(\mathbf{t}+\overline{\mathbf{t}}),\]
which implies
\begin{equation}\label{7.5}
(\mathbf{t}+\overline{\mathbf{t}})(z)=\left\{
  \begin{array}{ll}
    0 \text{\;or\;} p^{\alpha-\beta}, & \hbox{$z\in p^{\alpha-\beta}\mathbb{Z}_{p^\alpha}\setminus\{0\}$;} \\
    0 \text{\;or\;} -p^{\alpha-\beta}, & \hbox{$z\not\in p^{\alpha-\beta}\mathbb{Z}_{p^\alpha}$.}
  \end{array}
\right.
\end{equation}
Thus we have
\[(\mathbf{t}+\overline{\mathbf{t}})(z)=(\mathcal{F}\Delta_{Y\uplus(-Y)})(z)\equiv0\;\mathrm{mod}\;p^{\alpha-\beta},\]
for each $z\neq0$.\;Therefore,\;from Lemma \ref{3.7},\;
$$Y\uplus(-Y)=\psi_{\gamma}^{-1}(H'')$$ for some $1\leqslant\beta\leqslant\alpha$ and a subset $H''\subseteq \mathbb{Z}_{p^\beta}$.\;
We can also write $Y\uplus(-Y)=S+p^{\beta}\mathbb{Z}_{p^\alpha}$ for some $S\subseteq\{0,1,\ldots,p^\beta-1\}$,\;then from equation (\ref{2.4}),\;we have
\begin{equation}\label{7.6}
\begin{aligned}
(\mathbf{t}+\overline{\mathbf{t}})(z)=p^{\alpha-\beta}(\mathcal{F}\Delta_{S})(z)\Delta_{p^{\alpha-\beta}\mathbb{Z}_{p^\alpha}}(z).
\end{aligned}
\end{equation}
Combining equations (\ref{7.4}) and (\ref{7.6}),\;we have
\begin{equation}\label{7.7}
p^{\alpha-\beta}\mathbf{t}(\mathcal{F}\Delta_{S})\Delta_{p^{\alpha-\beta}\mathbb{Z}_{p^\alpha}}=\mu {p^\alpha}\Delta_0+p^{\alpha-\beta}(2\Delta_{p^{\alpha-\beta}\mathbb{Z}_{p^\alpha}}-1)\mathbf{t}.
\end{equation}
Therefore (\ref{7.7}) implies that
\[(\mathcal{F}\Delta_Y)(z)=\mathbf{t}(z)=0.\]
for any $z\not\in p^{\alpha-\beta}\mathbb{Z}_{p^\alpha}$.\;So from Lemma \ref{3.7},\;
$$Y=\psi_{\beta}^{-1}(H)$$ for some subset $H\subseteq \mathbb{Z}_{p^\beta}$.\;
Thus,\;(\ref{identity}),\;and (\ref{7.4}) implies that for each $z\in \mathbb{Z}_{2^{\beta}}$,\;
%\begin{equation}\label{7.8}
%\begin{aligned}
%\;\;\;\;&p^{2(\alpha-\beta)}\Delta_{p^{\alpha-\beta}\mathbb{Z}_{p^\alpha}}(z)\mathcal{F}\Delta_{H}(z)(\mathcal{F}\Delta_{H}(z)+\overline{\mathcal{F}\Delta_{H}(z)})\\
%&=\mu {p^\alpha}\Delta_0(z)+p^{2(\alpha-\beta)}(2\Delta_{p^{\alpha-\beta}\mathbb{Z}_{p^\alpha}}(z)-1)(\mathcal{F}\Delta_{H}(z))\Delta_{p^{\alpha-\beta}\mathbb{Z}_{p^\alpha}}(z),
%\end{aligned}
%\end{equation}
%So the implies that,\;
$$\mathcal{F}{\Delta}_{H}(z)(\mathcal{F}{\Delta}_{H}(z)+\overline{\mathcal{F}{\Delta}_{H}(z)})=\mu {p^{2\beta-\alpha}}{\Delta}_0(z)+\mathcal{F}{\Delta}_{H}(z)$$
hold for all $z\in \mathbb{Z}_{p^\beta}$.\;
 Then $(\mathcal{F}{\Delta}_{H}(z)+\overline{\mathcal{F}{\Delta}_{H}(z)})\in\{0,1\}$ for all $0\neq z\in \mathbb{Z}_{p^{\beta}}$.\;We assert that $H\uplus(-H)=\mathbb{Z}_{p^\beta}\uplus\{0\}$.\;Let $H_1=\mathbb{Z}_{p^\alpha}\setminus H$,\;then
\[(\mathcal{F}{\Delta}_{H_1}(z)+\overline{\mathcal{F}{\Delta}_{H_1}(z)})\in\{0,-1\}.\]
Let $O'_1,O'_1,\ldots,O'_{\beta}$ are $\mathbb{Z}^\ast_{p^{\beta}}$-orbits in $\mathbb{Z}_{p^{\beta}}$.\;It follows from Lemma \ref{l-3.3} that
\begin{equation}\label{5.10}
\begin{aligned}
H_1\uplus(-H_1)=\mathbb{Z}_{p^{\beta}}\setminus p^{\beta-\kappa}\mathbb{Z}_{p^{\beta}}.
\end{aligned}
\end{equation}
for some $0\leqslant \kappa\leqslant \beta-1$.\;\;Thus 
$$(\mathcal{F}{\Delta}_{H_1\uplus(-H_1)})(p^\kappa)=-p^\kappa=-1,$$
which implies that $\kappa=0$.\;So $H_1\uplus(-H_1)=\mathbb{Z}_{p^{\beta}}\setminus\{0\}$ and hence $H\uplus(-H)=\mathbb{Z}_{p^{\beta}}\uplus\{0\}$.\;This completes the proof.\;
\end{proof}

%\section*{Acknowledgements}
%Supported by Hunan Province College Students Research Learning and Innovative Experiment project (2016SJ017).\;
\section*{References}

\bibliographystyle{plain}
\bibliography{12}

\end{document}